 \tikzset{help lines/.style={step=#1cm,very thin, color=gray},
help lines/.default=.5} 
\tikzset{thick grid/.style={step=#1cm,thick, color=gray},
thick grid/.default=1} 
\newtheorem{thm}{Theorem}[section]
\newtheorem{lem}[thm]{Lemma}
\newtheorem{cor}[thm]{Corollary}
\newtheorem{prop}[thm]{Proposition}
\theoremstyle{definition}
\newtheorem{defn}[thm]{Definition}
\newtheorem{eg}[thm]{Example}
\theoremstyle{remark}
\newtheorem{rem}[thm]{Remark}
\numberwithin{equation}{section}
\newcommand{\vs}[1]{\vskip .#1 cm} 
 \newcommand{\onto}{\twoheadrightarrow}
 \newcommand{\ot}{\leftarrow}
\DeclareMathOperator{\coker}{coker}
\DeclareMathOperator{\Hom}{Hom}%
\DeclareMathOperator{\undim}{\underline{dim}}
\newcommand{\field}[1]{\mathbb{#1}}
\newcommand{\ZZ}{\ensuremath{{\field{Z}}}}
\newcommand{\CC}{\ensuremath{{\field{C}}}}
\newcommand{\RR}{\ensuremath{{\field{R}}}}
\newcommand{\commentout}[1]{}
\newcommand{\cA}{\ensuremath{{\mathcal{A}}}}
\newcommand{\cB}{\ensuremath{{\mathcal{B}}}}
\newcommand{\cF}{\ensuremath{{\mathcal{F}}}}
\newcommand{\cG}{\ensuremath{{\mathcal{G}}}}
\newcommand{\cL}{\ensuremath{{\mathcal{L}}}}
\newcommand{\cS}{\ensuremath{{\mathcal{S}}}}
\newcommand{\cU}{\ensuremath{{\mathcal{U}}}}
\title{More ghost modules I}
\author{Kiyoshi Igusa}
\address{Department of Mathematics, Brandeis University, Waltham, MA 02454}\email{igusa@brandeis.edu}
\thanks{Supported by the Simons Foundation}
\subjclass[2020]{
16G20; 19J10}
\keywords{Wall-and-chamber structures, torsion classes, torsion-free classes, Bridgeland stability, Morse theory}
\begin{document}

\begin{abstract}{Ghost modules were introduced in \cite{GrInvRedo} without definitions or proofs. We also introduced stability diagrams or ``relative pictures'' for torsion classes and torsion-free classes for representations of Dynkin quivers. Modules which were not in the chosen class reappeared as ``ghosts'', in fact one missing module produced two ghosts. In this short paper, we give a precise definition of ghost modules. We give several examples and prove basic properties of ghosts and pictures for torsion and torsion-free classes. We also introduce a third kind of ghost which we call ``extension ghosts''. In the next paper we will explain how these new ghosts can be used to visualize the computation of other invariants of $K_3$ of group rings.
}
\end{abstract}

\maketitle

\tableofcontents

\section*{Introduction}

{In our paper \cite{GrInvRedo} we introduced ``ghost modules'' and gave heuristic explanations for what they are since we had only two examples (which came from Morse theory). One of the mysterious properties of ghosts was: We killed one module and it came back as two ghosts! Ghost $B$, which we now call $Z_b$, had a definition. But ghost $A$, renamed $Z_a$ below, did not have a good explanation, except that we knew from the intended application that it must be there. The ghost modules arose in the context of stability diagrams for torsion classes and torsion-free classes \cite{Apostolos-Idun}. The modules which were excluded from these classes came back as ``ghosts''.}

{In this paper, we describe more precisely what the ghosts are and where they appear. There are three kinds of ghosts which we call ``subobject ghosts'', ``quotient ghosts'' and ``extension ghosts''. The subobject ghosts appear naturally in torsion classes. Quotient ghosts appear in torsion-free classes. This makes sense since torsion classes already have all their quotients. But subobjects are missing, only their ghosts can exist. Similarly for torsion-free classes.}

{We also introduce a new kind of ghost which we call an ``extension ghost''. Whereas the subobject ghosts and quotient ghosts are related to the generalized Grassmann invariant \cite{GrInvRedo}, extension ghosts are related to the higher Reidemeister torsion from \cite{IgKlein}. These are algebraic K-theory invariants associated to families of diffeomorphisms of smooth manifold. This application will be explained in another paper with several coauthors \cite{MoreGhosts2}. The current paper will be purely algebraic.}

{We also give proofs for two of the theorems claimed without proof in \cite{GrInvRedo}: the relative Harder-Narasimhan (HN) stratification for classes of modules closed under extension and the maximal relative Hom-orthogonality of stable sequences of modules in such classes using a modified version of Bridgeland stability. The notion of ``relative'' (forward) Hom-orthogonality is critical to the appearance of ghosts. An ordered pair of objects in our class of modules is called ``relatively Hom-orthogonal'' if there is no nonzero admissible morphism from the first to the second where a morphism is admissible if its kernel, image and cokernel are in our class of modules. For example, suppose we have an exact sequence $A\to B\to C$ where $B,C$ are in our class and $A$ is not. If there were no other morphism from $B$ to $C$, then the pair $B,C$ would be relatively Hom-orthogonal, since $A$ is missing and even the ghost of $A$ would not be allowed. However, if $C$ came before $B$, we could allow the ghost of $A$ to appear. So, $C,B$ followed by the ghost of $A$ would be a relatively Hom-orthogonal sequence with ghosts.}

{We give precise definitions and statements for very general classes of modules in Section \ref{sec1: definitions}. We construct a relative version of the ``wall and chamber'' structure for our class and define a maximal green sequence (MGS) to be a sequence of wall crossings (the standard definition, but with modified walls: Definition \ref{def: D(M)}) and show that they give a relatively Hom-orthogonal sequence. Wall and chamber structures originate in work of Br\"ustle, Smith and Treffinger \cite{BSTwallandchamb}. There is also work of Asai \cite{Asai} and this author \cite{Linearity} which we find useful. The notion of a Hom-orthogonal sequence was introduced in \cite{Linearity} and shown to be equivalent to a MGS in some cases in \cite{MGS4clustertilted}. This theorem was extended to all finite dimensional algebras by Demonet in \cite{KellerSurvey}. This paper by Keller and Demonet also gives a very nice survey of MGSs.
}

{
There are some pathologies in the general case. In Remark \ref{rem: why list of bifurcations is complete} we note that one case of the bifurcation of subobject ghosts is ``pathological'' and does not occur in a torsion class (or torsion-free class). In Section \ref{ss: extension ghosts}, the ``relative picture'' displayed in Figure \ref{Figure11} does not satisfy the definition of picture given in \cite{IT14}, \cite{ITW}. Figure \ref{Figure13} shows the problem and how to fix it using ``extension clones'' which we will need in the next paper.
}

{In Section \ref{ss: section 2}, we add the assumption that our class of modules is closed under extension. The purpose of this is to treat torsion classes and torsion-free classes simultaneously and to avoid the pathologies. This assumption is sufficient to prove the relative version of HN-stratification and show that MGSs give a maximal relative Hom-orthogonal sequence. The HN-stratification and Hom-orthogonality of maximal green sequences in abelian length categories has been studied in detail by many authors, e.g. \cite{BSTstability}, \cite{KellerSurvey} and Liu and Li \cite{FangLiLiu}. The novelty here is that we modify the definitions so that they apply to classes of modules closed under extension such as torsion classes and torsion-free classes. In applications we will need to consider more general classes of modules such at the one in Figure \ref{Figure11} and use ``clones'' such as in Figure \ref{Figure13}.
}

{
Section \ref{sec 3: ghosts} is about ghosts. In subsection \ref{ss: subsection on subobject ghosts} we give a detailed description of subobject ghosts and their bifurcation. When the domain of one ghost $Z_b$ hits a ``splitting wall'' it bifurcates by splitting off another ghost $Z_a$. The splitting wall is a boundary for $Z_a$, but $Z_b$ continues through the wall. We present the pictures and give more details about the five main examples of subobject ghosts and their bifurcation in subsection \ref{ss: examples}. In subsection \ref{ss: quotient object ghosts} we give the definitions and list the examples of quotient ghosts. There is a simple duality which relates subobject ghosts and quotient ghosts (see Example \ref{eg: example of duality}), so we don't go into too much detail about the quotient ghosts. In subsection \ref{ss: extension ghosts} we give a definition and two examples of extension ghosts.
}

{The next paper \cite{MoreGhosts2} will concentrate more on pictures coming from Morse theory and their relation to algebraic K-theory.
}

\section{Basic definitions}\label{sec1: definitions}

{Let $\Lambda$ be a finite dimensional algebra of rank $n$ (having $n$ simple modules) and let $\cL_0$ be a finite set of $\Lambda$-bricks $B_i$ whose dimension vectors $\undim B_i\in\ZZ^n$ are pairwise linear independent. Thus, they have distinct orthogonal hyperplane $H(B_i):=\{\theta\in\RR^n\,:\, \theta(B_i)=0\}$, where $\theta(M)$ denotes the dot product of $\theta$ with $\undim M$. Let $\cL=add\,\cL_0$ be the full subcategory of $mod\text-\Lambda$ of all finite direct sums of these bricks.}

\begin{defn}\label{def: admissible morphism}
An \emph{admissible} morphism between two objects of $\cL$ is defined to be a morphism $f:X\to Y$ image, kernel and cokernel are in $\cL$. A \emph{weakly admissible} morphism is one whose image and cokernel lie in $\cL$ but whose kernel is only required to lie in $\cF ilt(\cL)$, i.e., $\ker f$ has a filtration whose subquotients lie in $\cL$. Admissible morphisms are usually not closed under composition, but any composition of weakly admissible epimorphisms will be weakly admissible.
\end{defn}

In Section 2 we will need to assume that $\cL$ is closed under extension (in particular, we will need to assume its bricks are rigid). In this section we will see what we can do with no such assumptions. Here is a simple preview example.

{
\begin{eg}\label{eg: Kroneker}
Let $\Lambda$ be the Kroneker algebra. This is the tame hereditary algebra over some field $K$ given by the quiver with two vertices 1,2 and two arrows from 2 to 1. Let $\cL_0$ consist of three indecomposable modules, $P_1,P_2$ and $M$ where $M$ is one of the modules with dimension vector $(1,1)$. These are bricks and they form a short exact sequence
\[
	P_1\xrightarrow f P_2\xrightarrow g M.
\]
Let $\cL=add\,\cL_0$. Then $f,g$ are the only weakly admissible morphisms between indecomposable objects of $\cL$. The monomorphism $P_1\to M$ is not admissible since the quotient $S_2$ is not in $\cL$. However, we consider the exact sequence
\[
	P_1\to M\to S_2
\]
as a ghost of the missing module $S_2$. We call this a ``quotient ghost'' since $S_2=M/P_1$ is a quotient of bricks in $\cL$. There are no other quotient ghosts or subobject ghosts. For example $P_1^2\to P_2\to S_2$ is not another ghost of $S_2$ since the kernel $P_1^2$ is not a brick and thus not allowed in a ghost. This is shown in Figure \ref{fig: Kroneker ghosts} where we also see the extension ghost of $P_2$. We use the notation $D(M)$ from Definitionn \ref{def: D(M)} below.
\end{eg}
}

{
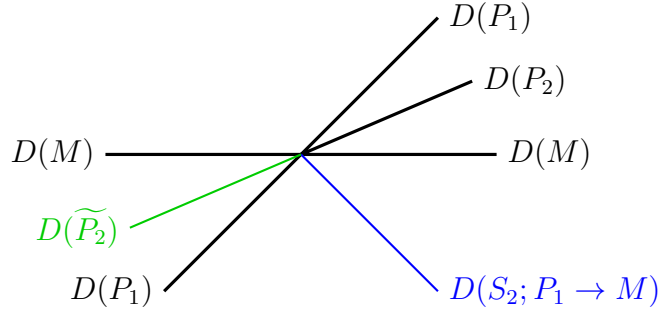
\begin{figure}[htbp]
\begin{center}
\begin{tikzpicture}[scale=1.3]
\draw[very thick] (-2,0)--(2,0) (-1.4,-1.4)--(1.4,1.4) (0,0)--(1.75,.75);
\draw[ thick,blue] (0,0)--(1.4,-1.4);
\draw[ thick,green!80!black] (0,0)--(-1.75,-.75);
\draw (1.4,1.4) node[right]{$D(P_1)$};
\draw (-1.4,-1.4) node[left]{$D(P_1)$};
\draw (2,0) node[right]{$D(M)$};
\draw (-2,0) node[left]{$D(M)$};
\draw (1.75,.75) node[right]{$D(P_2)$};
\draw[green!80!black] (-1.75,-.75) node[left]{$D(\widetilde{P_2})$};
\draw[blue] (1.4,-1.4) node[right]{$D(S_2;P_1\to M)$};
\end{tikzpicture}
\caption{This is the picture for the Kroneker Example \ref{eg: Kroneker}. Since $P_1$ and $M$ are minimal in $\cL$, $D(P_1)$ and $D(M)$ are hyperplanes. Since $M$ is a weakly admissible quotient of $P_2$, the domain $D(P_2)$ is given by $\theta(P_2)=0$ and $\theta(M)\ge0$. Therefore $D(P_2)$ contains only that part of the hyperplane $H(P_2)$ which is above $D(M)$. The missing part of $D(P_2)$ is the extension ghost $D(\widetilde{P_2})$. The domain of the quotient ghost $\cG h^\ast(S_2;P_1\to M)$ is given by $\theta(S_2)=0$ and $\theta(M)\le0$. So, the ghost of $S_2$ is below $D(M)$. }
\label{fig: Kroneker ghosts}
\end{center}
\end{figure}
}

\subsection{Walls and chambers}\label{sec 1.1: Walls and chambers}

{
Stability conditions were introduced by King \cite{King} and later generalized by Bridgeland \cite{Bridgeland}, \cite{Bridgeland2}. King's original definition was to make, for any module $M$, $D_K(M)$ equal to the set of all linear ``stability conditions'' $\theta:\RR^n\to \RR$ so that $\theta(M)$, defined to be $\theta$ applied to the dimension vector of $M$ is equal to zero and $\theta(M')\le0$ for all $M'\subset M$. This subobject condition is equivalent to the condition $\theta(M'')\ge0$ for all quotient objects $M''$ of $M$. In our setting, these two notions are not equivalent and we take the second.
}

{\begin{defn}\label{def: D(M)}
For $M$ a brick, $D(M)$ is defined to be the set of all $\theta$ so that $\theta(M)=0$ and $\theta(M')\ge0$ for all weakly admissible quotients $M'$ of $M$. This is a closed set. Since $\cL$ has only finitely many bricks, the union of all $D(M)$ is closed. The \emph{interior} $int\,D(M)\subset D(M)$ is the set of all $\theta$ so that $\theta(M)=0$ and $\theta(M')>0$ for all weakly admissible quotients $M'$ of $M$. The \emph{boundary} $\partial D(M)$ is the complement in $D(M)$ of its interior $int\,D(M)$. We sometimes refer to $D(M)$ as a \emph{wall}. (See \cite{BSTwallandchamb}.) The union of the walls $D(M)$ is called the \emph{(relative) picture} for $\cL$. It is also known as the (relative) ``stability diagram'' or ``scattering diagram''.
\end{defn}

We say that $M$ is \emph{minimal} if it has no weakly admissible quotients. In that case $D(M)=H(M)$ is the hyperplane given by the single linear equation $\theta(M)=0$.
}

\begin{prop}\label{prop: U D(M)=U int D(M)}
The union of $int\,D(M)$ is equal to $\bigcup D(M)$. 
\end{prop}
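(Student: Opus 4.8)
The claim is that $\bigcup_M \mathrm{int}\,D(M) = \bigcup_M D(M)$, where both unions range over all bricks $M$ in $\cL$. The inclusion $\subseteq$ is immediate since $\mathrm{int}\,D(M)\subset D(M)$ for each $M$. So the content is the reverse inclusion: given $\theta\in D(M)$ for some brick $M$, I must produce a brick $N$ (possibly different from $M$) with $\theta\in\mathrm{int}\,D(N)$.

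Let me think about how $\theta$ can fail to be in $\mathrm{int}\,D(M)$. By Definition \ref{def: D(M)}, $\theta\in D(M)$ means $\theta(M)=0$ and $\theta(M')\ge 0$ for every weakly admissible quotient $M'$ of $M$; it lies in the boundary precisely when $\theta(M')=0$ for some such proper quotient $M'$. The plan is to do a descent/induction argument on the dimension of the brick. Pick a weakly admissible epimorphism $M\onto M'$ with $\theta(M')=0$ and $M'$ of smallest possible dimension among such quotients; I want to argue that $M'$ is a brick and that $\theta\in D(M')$, then continue. Since $M\onto M'$ is weakly admissible, $M'\in\cL=\add\cL_0$, so $M'$ is a direct sum of the chosen bricks $B_i$; but $\theta(M')=0$ and the $\undim B_i$ lie on distinct hyperplanes (pairwise linearly independent), so in fact $M'$ must be a single brick $B_i$ — a sum of two or more would force $\theta$ to vanish on two independent dimension vectors, which is fine in $\RR^n$ for $n\ge 2$, so this needs a little care; more robustly, I should choose $M'$ to be a brick summand of the quotient on which $\theta$ vanishes, which exists because $\theta$ vanishing on a direct sum forces it to vanish on at least one indecomposable summand (as $\theta$ of a direct sum is the sum of the $\theta$-values, and... no — that's not automatic either).

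Here is the cleaner route. Suppose $\theta \in \partial D(M)$, so there is a proper weakly admissible quotient $q\colon M \onto M'$ with $\theta(M') = 0$. Write $M' = \bigoplus B_{i_j}$; since $\theta(M') = \sum \theta(B_{i_j}) = 0$ but each individual term need not vanish, instead use that the composite $M \onto M' \onto B_{i_j}$ is still a weakly admissible epimorphism (a composition of weakly admissible epimorphisms is weakly admissible, per Definition \ref{def: admissible morphism}), and among all the brick quotients $B_{i_j}$ appearing this way over all boundary-realizing $M'$, pick one, call it $N$, with $\undim N$ minimal (say, minimal total dimension). Two things must be checked: first, that $\theta(N) = 0$ — this is where I must be careful, since $\theta$ vanishing on the sum does not force it on a summand; the fix is to not take an arbitrary $M'$ but to use that $D(M)$ is cut out by finitely many inequalities $\theta(M')\ge 0$ and $\theta\in\partial D(M)$ lies on one of the bounding hyperplanes $\theta(M')=0$, then pass to an indecomposable summand $N$ of $M'$ with $\theta(N)=0$, which exists because if $\theta(B_{i_j})>0$ for all $j$ then $\theta(M')>0$, contradiction. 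So some summand $N=B_{i_j}$ has $\theta(N)\le 0$; combined with $\theta(N)\ge 0$ (as $N$ is itself a weakly admissible quotient of $M$ via the composite, so $\theta\in D(M)$ gives $\theta(N)\ge 0$) we get $\theta(N)=0$. Second, I claim $\theta\in D(N)$: any weakly admissible quotient $N''$ of $N$ is, via $M\onto M'\onto N\onto N''$, a weakly admissible quotient of $M$, so $\theta(N'')\ge 0$ because $\theta\in D(M)$. Thus $\theta\in D(N)$ with $\dim N < \dim M$.

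Now iterate: either $\theta\in\mathrm{int}\,D(N)$ and we are done, or $\theta\in\partial D(N)$ and we repeat, each time strictly decreasing the dimension of the relevant brick. Since dimensions are positive integers, the process terminates, at which point we have reached a brick $N_\infty$ — in the worst case a minimal brick, where $D(N_\infty)=H(N_\infty)$ has empty boundary — with $\theta\in\mathrm{int}\,D(N_\infty)$. This proves $\bigcup D(M)\subseteq\bigcup\mathrm{int}\,D(M)$ and hence equality.

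The main obstacle, and the step I would write out most carefully, is the passage from ``$\theta$ vanishes on the quotient $M'$'' to ``$\theta$ vanishes on, and lies in $D$ of, a strictly smaller brick.'' The subtlety is purely that $\theta(\bigoplus B_i) = 0$ does not by itself force $\theta(B_i) = 0$ for some $i$ — one needs to combine the equality with the inequalities $\theta(B_i) \ge 0$ coming from $\theta \in D(M)$ to conclude. Everything else (composites of weakly admissible epimorphisms being weakly admissible; termination by dimension count; the base case of minimal bricks) is routine given the definitions already in place. One should also double-check the degenerate possibility that $M'=0$, i.e. that the only "quotient" forcing $\theta$ to the boundary is trivial — but a weakly admissible quotient is by convention a nonzero proper quotient here, and if $D(M)$ has no proper bounding inequalities active then $\theta$ was already interior, so this case does not arise.
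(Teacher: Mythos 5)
Your proof is correct and follows essentially the same route as the paper's: descend from a boundary point of $D(M)$ to the wall of a weakly admissible quotient and induct on dimension, with minimal bricks as the base case. Your extra step of passing to an indecomposable brick summand $N$ of the quotient $M'$ (using that each summand is itself a weakly admissible quotient of $M$, so $\theta\ge0$ on each and the vanishing sum forces $\theta(N)=0$) is a point the paper's one-line proof glosses over, since $D(-)$ is only defined for bricks, and it is good that you spelled it out.
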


\begin{proof} Let $\theta\in\partial D(M)$. Then $\theta(M')=0$ for some weakly admissible quotient $M'$ of $M$. Then $\theta\in D(M')$ since any weakly admissible quotient $M''$ of $M'$ is also a weakly admissible quotient of $M$. So, $\theta(M'')\ge0$. By induction on the size of $M$, we have that the wall $D(M')$ lies in the union of interiors of other walls.
\end{proof}

The components of the complement of $\bigcup D(M)$ in $\RR^n$ are called \emph{chambers} \cite{BSTwallandchamb}. To each chamber $\cU$ we will associate a set of objects in $\cL$ which we will denote $\cS(\cU)$.

\begin{defn}\label{def: the set S(th)}
For each $\theta\in \RR^n$, let $\cS(\theta)$ be the set consisting of $0$ and all nonzero $M\in \cL$ so that $\theta(M)>0$ and $\theta(M')>0$ for all weakly admissible quotients $M'$ of $M$. Thus, if $M$ is in $\cS(\theta)$, then any weakly admissible quotient of $M$ is also in $\cS(\theta)$.
\end{defn}

For example, if $\eta$ is the vector all of whose coordinates are 1, then $\cS(\eta)=\cL$ and $\cS(-\eta)=0$ since $\eta(M)=\dim M>0$ for all nonzero $M$.

\begin{lem}\label{lem1 for sets S(U)}
For any $X\in\cL$ and $\theta$ not on any wall $D(M)$, $X$ is not in $\cS(\theta)$ if and only if $X$ has an indecomposable weakly admissible quotient $B$ so that $\theta(B)<0$.
\end{lem}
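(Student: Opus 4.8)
The plan is to read off the equivalence directly from Definition~\ref{def: the set S(th)}, proving the two implications separately. The reverse implication will be immediate and will not use the hypothesis on $\theta$; the forward implication will need one reduction to brick summands together with a single appeal to the assumption that $\theta$ lies on no wall.

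For ``$\Leftarrow$'': given an indecomposable weakly admissible quotient $B$ of $X$ with $\theta(B)<0$, note $B\neq0$, hence $X\neq0$; and since membership $X\in\cS(\theta)$ would force $\theta(B)>0$ for this particular weakly admissible quotient, we conclude $X\notin\cS(\theta)$.

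For ``$\Rightarrow$'': suppose $X\notin\cS(\theta)$. Since $0\in\cS(\theta)$ we have $X\neq0$, and negating the membership condition---using that the identity of $X$ is a weakly admissible epimorphism, so that $X$ itself counts as a weakly admissible quotient of $X$---produces a weakly admissible quotient $Q$ of $X$ with $\theta(Q)\le0$. Decompose $Q\cong\bigoplus_i B_i^{a_i}$ into bricks $B_i\in\cL_0$ by Krull--Schmidt; from $\sum_i a_i\theta(B_i)=\theta(Q)\le0$ with all $a_i>0$, some summand $B:=B_i$ has $\theta(B)\le0$. The split projection $Q\onto B$ is admissible (kernel, image and cokernel all lie in $\cL$), so by the last sentence of Definition~\ref{def: admissible morphism} the composite $X\onto Q\onto B$ is weakly admissible, i.e.\ $B$ is a weakly admissible quotient of $X$. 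If $\theta(B)<0$, we are done with $B$. If $\theta(B)=0$, then since $\theta\notin D(B)$, Definition~\ref{def: D(M)} forces a weakly admissible quotient $C$ of $B$ with $\theta(C)<0$; decomposing $C$ into bricks as before gives a brick summand $D$ with $\theta(D)<0$, and composing $X\onto B\onto C\onto D$ exhibits the required indecomposable weakly admissible quotient of $X$.

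The one subtle point---and the heart of the argument---is that $\theta\notin D(B)$ does \emph{not} imply $\theta(B)\neq0$, since $D(B)$ is in general a proper subset of the hyperplane $H(B)$; this is exactly why the case $\theta(B)=0$ must be handled separately by passing to a further weakly admissible quotient. Pleasantly, that extra step terminates at once, because it already produces a quotient with strictly negative $\theta$-value, so no induction on $\dim B$ is required. The supporting facts---Krull--Schmidt for $\cL=add\,\cL_0$ and the transitivity of ``weakly admissible quotient'' along epimorphisms---are routine, the latter being precisely the closure statement recorded in Definition~\ref{def: admissible morphism}.
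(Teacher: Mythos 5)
Your proof is correct, but it is organized differently from the paper's. The paper argues by an extremal choice: it picks a weakly admissible quotient $X'$ of $X$ with $\theta(X')\le 0$ that is \emph{minimal}, and uses minimality twice --- once to force $X'$ indecomposable (its components would be smaller quotients with $\theta>0$), and once, in the borderline case $\theta(X')=0$, to conclude that every further weakly admissible quotient has $\theta>0$, so that $\theta\in int\,D(X')$, contradicting the wall-avoidance hypothesis. You avoid any minimality: you take an arbitrary quotient $Q$ with $\theta(Q)\le 0$, extract a brick summand $B$ with $\theta(B)\le 0$ by Krull--Schmidt, and in the borderline case $\theta(B)=0$ you invoke $\theta\notin D(B)$ directly to descend one more step to a quotient $C$ with $\theta(C)<0$ and then to a brick summand $D$ of $C$; the descent terminates immediately because the value is already strictly negative. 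So the paper's argument is a proof by contradiction landing on (the interior of) a wall, while yours is constructive and produces the required indecomposable quotient explicitly; both rest on exactly the same two background facts that you state --- split projections onto brick summands are admissible, and compositions of weakly admissible epimorphisms are weakly admissible --- and on the same implicit convention that the quotients quantified over in Definitions \ref{def: D(M)} and \ref{def: the set S(th)} are nonzero. Your observation that $\theta\notin D(B)$ does not by itself give $\theta(B)\neq 0$ correctly identifies why the extra step (or, in the paper, the minimality) is needed.
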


\begin{proof} Clearly, the existence of such a $B$ implies $X$ is not in $\cS(\theta)$. So suppose $X$ is not in $\cS(\theta)$. Then, by definition, there exists $X'$, a weekly admissible quotient of $X$ (possibly $X'=X$) so that $\theta(X')\le0$. Take $X'$ minimal. Then $X'$ must be indecomposable, otherwise the components $X_i$ of $X'$, being smaller weakly admissible quotient of $X$, would have $\theta_0(X_i)>0$ making $\theta(X')>0$. We claim that $\theta(X')<0$ since, otherwise, $\theta(X')=0$ and $\theta(X'')>0$ for all weakly admissible quotients $X''$ of $X'$, making $\theta$ an element of $int\,D(X')$ contrary to assumptions.
\end{proof}

\begin{lem}\label{Lemma 2} 
$X\in \cS(\theta)$ if and only if, for every indecomposable weakly admissible quotient $B$ of $X$, $\theta(B)>0$. Moreover, $B\in\cS(\theta)$ for any such $B$.
\end{lem}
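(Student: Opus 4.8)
The plan is to reduce the statement to two elementary facts: the additivity of $\undim$ (hence of $\theta$) on direct sums, and the observation recorded in Definition \ref{def: admissible morphism} that a composition of weakly admissible epimorphisms is again weakly admissible. From these I would first extract the key structural point: \emph{if $M'$ is a nonzero weakly admissible quotient of $X$ and $M' = B_1 \oplus \cdots \oplus B_k$ is its decomposition into indecomposables, then each $B_i$ is an indecomposable weakly admissible quotient of $X$.} Indeed the projection $\pi_i \colon M' \onto B_i$ is an epimorphism with image $B_i \in \cL$ (a summand of $M' \in \add \cL_0$), zero cokernel, and kernel $\bigoplus_{j \ne i} B_j \in \cL \subseteq \cF ilt(\cL)$, so $\pi_i$ is weakly admissible; composing it with a given weakly admissible epimorphism $X \onto M'$ and invoking Definition \ref{def: admissible morphism} yields a weakly admissible epimorphism $X \onto B_i$. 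Together with $\theta(M') = \sum_i \theta(B_i)$, this handles the bookkeeping.

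With that in hand, the biconditional is quick. The forward implication is immediate: an indecomposable weakly admissible quotient is in particular a weakly admissible quotient, so if $X \in \cS(\theta)$ then $\theta(B) > 0$ for every such $B$ by Definition \ref{def: the set S(th)}. For the converse I would assume $\theta(B) > 0$ for every indecomposable weakly admissible quotient $B$ of $X$ (the case $X = 0$ being trivial), take an arbitrary nonzero weakly admissible quotient $M'$ of $X$, decompose $M' = \bigoplus B_i$, apply the structural point to see that each $B_i$ is an indecomposable weakly admissible quotient of $X$, and conclude $\theta(M') = \sum_i \theta(B_i) > 0$; specializing to $M' = X$ also gives $\theta(X) > 0$, so $X \in \cS(\theta)$.

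For the ``moreover'' clause I would assume these equivalent conditions hold and let $B$ be any indecomposable weakly admissible quotient of $X$. Every nonzero weakly admissible quotient $B'$ of $B$ is, by composing weakly admissible epimorphisms, also a weakly admissible quotient of $X$, hence has $\theta(B') > 0$ by what was just proved; thus $\theta(B') > 0$ for all weakly admissible quotients of $B$, i.e.\ $B \in \cS(\theta)$. I do not anticipate a real obstacle: the single delicate point is that weakly admissible epimorphisms compose (unlike general admissible morphisms), which is precisely what makes passing to direct summands and to iterated quotients stay inside the allowed class --- so I would make sure to flag that each time it is used.
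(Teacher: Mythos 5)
Your proof is correct, but it takes a different route from the paper. The paper's proof of this lemma is a one-liner: the forward direction is immediate from Definition \ref{def: the set S(th)}, and the converse is the contrapositive of Lemma \ref{lem1 for sets S(U)}, whose hypothesis that $\theta$ lies on no wall $D(M)$ is thereby silently imported into the lemma (harmlessly, since it is only applied at such $\theta$, e.g.\ in Theorem \ref{thm: S(U) is constant}). You instead verify the converse directly: decompose an arbitrary nonzero weakly admissible quotient $M'$ of $X$ into indecomposable summands, note that each summand is again an indecomposable weakly admissible quotient of $X$ because the projection is weakly admissible and weakly admissible epimorphisms compose (the last sentence of Definition \ref{def: admissible morphism}), and sum $\theta$ over the summands. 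This is close in spirit to the component argument buried inside the paper's proof of Lemma \ref{lem1 for sets S(U)}, but your version needs no genericity assumption on $\theta$ and no minimality argument, so it proves the lemma for every $\theta$; what it does not give is the sharper conclusion of Lemma \ref{lem1 for sets S(U)} (a quotient with $\theta(B)$ strictly negative), which is what Theorem \ref{thm: S(U) is constant} actually uses. You also spell out the ``moreover'' clause via composition of weakly admissible epimorphisms, which the paper leaves to the remark at the end of Definition \ref{def: the set S(th)}. Both arguments are sound; yours is slightly more general and self-contained, the paper's is shorter given the lemma it already has.
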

\vs1
\begin{proof} This condition is necessary by definition of $\cS(\theta)$. Conversely, if $X\notin \cS(\theta)$, the condition fails by the previous lemma.
\end{proof}

\begin{thm}\label{thm: S(U) is constant}
The set $\cS(\theta)$ is locally constant on the complement of the set of walls, $D(M)$. Thus it is constant on each component of this set, i.e., in each chamber $\cU$.
\end{thm}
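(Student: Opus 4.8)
The plan is to show that $\cS(\theta)$ is locally constant by proving that for each $\theta_0$ not on any wall, there is a neighborhood $U$ of $\theta_0$ on which membership of each $X\in\cL$ in $\cS(\theta)$ does not change. Since $\cL$ has only finitely many bricks (hence only finitely many isomorphism classes of objects relevant to the finitely many dimension vectors that occur), it suffices to fix a single $X\in\cL$ and find a neighborhood of $\theta_0$ on which ``$X\in\cS(\theta)$'' is constant; intersecting finitely many such neighborhoods gives the result. The key reduction is Lemma \ref{Lemma 2}: $X\in\cS(\theta)$ if and only if $\theta(B)>0$ for every indecomposable weakly admissible quotient $B$ of $X$. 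So the condition defining $\cS(\theta)$, a priori an infinite list of strict inequalities over all weakly admissible quotients, is equivalent to a \emph{finite} list of strict inequalities $\theta(B)>0$, one for each of the finitely many indecomposable bricks $B\in\cL_0$ that arise as a weakly admissible quotient of $X$.

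First I would handle the case $\theta_0\notin\bigcup D(M)$, i.e. $\theta_0$ avoids all walls. For each indecomposable weakly admissible quotient $B$ of $X$, the hyperplane $H(B)$ is contained in $D(B)$ (indeed $H(B)\supseteq D(B)$, and in any case $D(B)\subseteq H(B)$), so $\theta_0\notin D(B)$ forces $\theta_0(B)\neq0$. Thus each of the finitely many quantities $\theta_0(B)$ is either strictly positive or strictly negative, and by continuity of the linear map $\theta\mapsto\theta(B)$ this sign is preserved on a small ball $U$ around $\theta_0$. On that ball, for every indecomposable weakly admissible quotient $B$ of $X$, the sign of $\theta(B)$ equals the sign of $\theta_0(B)$; in particular $\theta(B)>0$ for all such $B$ if and only if $\theta_0(B)>0$ for all such $B$. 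By Lemma \ref{Lemma 2} this says precisely that $X\in\cS(\theta)$ for all $\theta\in U$ if and only if $X\in\cS(\theta_0)$, and likewise $X\notin\cS(\theta)$ for all $\theta\in U$ if $X\notin\cS(\theta_0)$. Shrinking $U$ to lie inside the ball that works simultaneously for all finitely many $X$ (equivalently, for all finitely many bricks $B$) gives a neighborhood on which $\cS(\theta)$ is constant.

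The only subtlety is to make sure the ``finitely many inequalities'' reformulation is legitimate: I should note that although $X$ may have infinitely many weakly admissible quotients up to isomorphism in principle, each such quotient is a direct sum of bricks from the finite set $\cL_0$ (being an object of $\cL$), and a sum $\bigoplus B_i$ satisfies $\theta(\bigoplus B_i)>0$ as soon as each $\theta(B_i)>0$; conversely an indecomposable weakly admissible quotient is one of the finitely many $B\in\cL_0$. So the relevant data is genuinely the finite sign vector $(\sgn\theta(B))_{B\in\cL_0}$, which is locally constant off $\bigcup_B H(B)\supseteq\bigcup_B D(B)$, hence in particular off $\bigcup_M D(M)$. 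The conclusion that $\cS$ is constant on each chamber $\cU$ is then immediate: $\cU$ is connected and $\cS$ is locally constant on it, so $\cS$ is constant on $\cU$. I do not expect a serious obstacle here; the one place to be careful is the bookkeeping that ``weakly admissible quotient'' is closed under passing from a decomposable quotient to its indecomposable summands (which is the content of the parenthetical remark in Definition \ref{def: the set S(th)} and is already used in Lemma \ref{lem1 for sets S(U)}), so that restricting attention to indecomposable quotients loses no information.
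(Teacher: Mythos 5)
There is a genuine gap, and it sits exactly at the point where this theorem differs from the trivial statement about a hyperplane arrangement. Your reduction via Lemma \ref{Lemma 2} to the finite list of inequalities $\theta(B)>0$, over the indecomposable weakly admissible quotients $B\in\cL_0$ of $X$, is fine (and your bookkeeping about summands of decomposable quotients is the right thing to check). The problem is the sentence ``$\theta_0\notin D(B)$ forces $\theta_0(B)\neq0$.'' The correct containment is $D(B)\subseteq H(B)$ (your parenthetical even says so, contradicting the clause it is attached to), and this containment is proper whenever $B$ is not minimal. Hence a point $\theta_0$ can lie in the complement of \emph{all} walls $D(M)$ and still satisfy $\theta_0(B)=0$ for some brick $B$: it only needs to lie on the part of $H(B)$ where some weakly admissible quotient of $B$ is strictly negative. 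This really happens: in the Kronecker Example \ref{eg: Kroneker}, the ray labelled $D(\widetilde{P_2})$ in Figure \ref{fig: Kroneker ghosts} consists of points with $\theta(P_2)=0$ lying off every wall. At such $\theta_0$ the sign vector $(\sgn\theta(B))_{B\in\cL_0}$ is \emph{not} locally constant, so your argument proves nothing there. Your closing inference is also reversed: local constancy off $\bigcup_B H(B)$ does not yield local constancy off $\bigcup_M D(M)$, since the complement of the walls is the \emph{larger} open set; the chambers of the theorem can touch the hyperplanes $H(B)$ outside $D(B)$, and those are precisely the delicate points.

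The missing ingredient is Lemma \ref{lem1 for sets S(U)}, which is where the hypothesis $\theta_0\notin\bigcup D(M)$ is genuinely used: if $X\notin\cS(\theta_0)$, a minimal weakly admissible quotient with $\theta_0\le0$ is indecomposable and must satisfy $\theta_0(B)<0$ \emph{strictly}, because $\theta_0(B)=0$ would force $\theta_0\in int\,D(B)$, a contradiction. The paper's proof then takes the neighborhood $U$ cut out only by the signs of the bricks with $\theta_0(B)\neq0$, ignoring the bricks on which $\theta_0$ vanishes: non-membership at $\theta_0$ propagates to all of $U$ via this strictly negative witness, while membership propagates because every indecomposable weakly admissible quotient of an $X\in\cS(\theta_0)$ is strictly positive at $\theta_0$, hence on $U$. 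As written, your proof is valid only on the complement of the hyperplane arrangement $\bigcup_B H(B)$; to get the full statement of Theorem \ref{thm: S(U) is constant} you need to add the step above (or an equivalent use of Lemma \ref{lem1 for sets S(U)}) to handle bricks with $\theta_0(B)=0$.
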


\begin{proof} Take $\theta_0$ not on any wall $D(B)$. Let $\cA$ be the set of all bricks $B_i$ so that $\theta_0(B_i)>0$ and let $\cB$ be the set of all bricks $B_j$ so that $\theta_0(B_j)<0$. Let $U$ be the set of all $\theta$ so that $\theta(B_i)>0$ for all $B_i\in\cA$ and $\theta(B_j)<0$ for all $B_j\in\cB$. Since $\cL$ has only finitely many bricks, $U$ is an open set containing $\theta_0$. 

By Lemma \ref{lem1 for sets S(U)}, $X\notin \cS(\theta_0)$ if and only if $X\onto B$ for some $B\in \cB$. Then $X\notin\cS(\theta)$ for all $\theta\in U$ since $\theta(B)<0$. So, $\cS(\theta)\subset\cS(\theta_0)$.

Let $X\in\cS(\theta_0)$. Then Lemma \ref{Lemma 2} implies that for any $X\onto B$, $B\in\cA$. Since $\theta(B)>0$ for all $\theta\in U$, we have $X\in\cS(\theta)$. So, $\cS(\theta_0)\subset\cS(\theta)$. So, they are equal and $\cS(\theta)$ is locally constant.
\end{proof}

For any chamber $\cU$, we denote by $\cS(\cU)$ the set $\cS(\theta)$ for any and thus all $\theta\in\cU$. We would like to see how $\cS(\theta)$ changes as $\theta$ crosses a wall.

\subsection{Wall crossing}\label{sec 1.2: wall crossing}

Suppose two chambers $\cU$ and $\cU'$ are separated by a wall $int\,D(M)$. Let $\theta_0$ be a generic point on the portion of this wall which separates $\cU,\cU'$. Thus $\theta_-=\theta_0-\varepsilon \eta\in\cU$ for sufficiently small $\varepsilon>0$ and $\theta_+=\theta_0+\varepsilon \eta\in\cU'$ for sufficiently small $\varepsilon>0$ where $\eta=(1,1,\cdots,1)$ is the vector so that $\eta(X)>0$ for all $X\neq0$.

\begin{lem}\label{lem: M on one side of wall D(M)} $M\in\cS(\theta_+)$ but $M$ is not in $\cS(\theta_0)$ or in $\cS(\theta_-)$.
\end{lem}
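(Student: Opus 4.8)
The plan is to obtain all three assertions by directly unwinding Definition \ref{def: the set S(th)}, using only one input: because $int\,D(M)$ is by hypothesis the piece of wall separating $\cU$ from $\cU'$ and $\theta_0$ is a generic point on it, we have $\theta_0\in int\,D(M)$. Explicitly this says $\theta_0(M)=0$ while $\theta_0(M')>0$ for \emph{every} weakly admissible quotient $M'$ of $M$ --- the strict inequality being the entire point of ``interior'' as opposed to the closed wall $D(M)$.

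First I would dispose of the two negative assertions, neither of which needs interiority. Since $M$ is a brick it is nonzero, so membership in $\cS(\theta_0)$ would force $\theta_0(M)>0$; but $\theta_0(M)=0$, hence $M\notin\cS(\theta_0)$. Similarly $\theta_-(M)=\theta_0(M)-\varepsilon\,\eta(M)=-\varepsilon\,\eta(M)<0$, because $\eta(M)=\dim M>0$, so $M\notin\cS(\theta_-)$ as well. For the positive assertion, $\theta_+(M)=\theta_0(M)+\varepsilon\,\eta(M)=\varepsilon\,\eta(M)>0$, and for any weakly admissible quotient $M'$ of $M$ we get $\theta_+(M')=\theta_0(M')+\varepsilon\,\eta(M')$, a sum of two strictly positive terms: the first is positive because $\theta_0\in int\,D(M)$, the second because $M'\neq0$. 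By Definition \ref{def: the set S(th)} this is exactly the condition for $M\in\cS(\theta_+)$. (This conclusion in fact holds for every $\varepsilon>0$; smallness of $\varepsilon$ is only used elsewhere, to place $\theta_-$ in $\cU$ and $\theta_+$ in $\cU'$.)

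I do not expect a real obstacle here: each of the three statements is a one-line computation. The only thing to be careful about is to invoke the \emph{strict} inequalities $\theta_0(M')>0$ supplied by $\theta_0\in int\,D(M)$ --- rather than the merely non-strict inequalities $\theta_0(M')\ge0$ that define the closed wall $D(M)$ --- at precisely the place they are needed, namely in verifying $\theta_+(M')>0$ for the weakly admissible quotients of $M$; everything else follows from $\theta_0\in H(M)$ and $\eta(X)>0$ for $X\ne0$.
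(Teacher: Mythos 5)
Your proposal is correct and follows essentially the same direct unwinding of Definition \ref{def: the set S(th)} as the paper: $\theta_0(M)=0$ and $\theta_-(M)<0$ rule out $\cS(\theta_0)$ and $\cS(\theta_-)$, and $\theta_+$ is checked on $M$ and its weakly admissible quotients. The only (harmless) difference is that you lean on the strict inequalities $\theta_0(M')>0$ from $\theta_0\in int\,D(M)$, whereas the paper gets strictness for free from $\theta_+(M')>\theta_0(M')\ge 0$, so the non-strict bound from $\theta_0\in D(M)$ already suffices and genericity of $\theta_0$ is not actually needed for this lemma.
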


\begin{proof} Since $\theta_0(M)=0$ and $\theta_-(M)<0$, $M$ is not in $\cS(\theta_0)$ or $\cS(\theta_-)$. On the other hand, $\theta_+(M')>\theta_0(M')\ge0$ for all weakly admissible quotients $M'$ of $M$. So, $M\in\cS(\theta_+)$.
\end{proof}

\begin{thm}\label{thm: S(t0)=S(t-)}
$\cS(\cU)=\cS(\theta_-)=\cS(\theta_0)\subsetneq \cS(\theta_+)=\cS(\cU')$.
\end{thm}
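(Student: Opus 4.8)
The plan is to deduce the two outer equalities directly from the constancy theorem, to get the chain of inclusions from an elementary monotonicity observation, and to extract strictness from Lemma~\ref{lem: M on one side of wall D(M)}. Since $\theta_-\in\cU$ and $\theta_+\in\cU'$ lie in chambers, hence off every wall, Theorem~\ref{thm: S(U) is constant} gives at once $\cS(\cU)=\cS(\theta_-)$ and $\cS(\cU')=\cS(\theta_+)$; moreover $\cS(\theta_\pm)$ does not depend on the (sufficiently small) $\varepsilon$, so I am free to shrink $\varepsilon$ whenever convenient.

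The monotonicity observation I would use is simply that $\eta(X)=\dim X>0$ for every nonzero $X\in\cL$, so $\theta_-(X)<\theta_0(X)<\theta_+(X)$. Hence if $X\in\cS(\theta_-)$, then $\theta_-(X')>0$ for $X$ itself and for every nonzero weakly admissible quotient $X'$ of $X$, and the same strict inequalities persist after replacing $\theta_-$ by $\theta_0$ or by $\theta_+$; this yields $\cS(\theta_-)\subseteq\cS(\theta_0)$ and $\cS(\theta_-)\subseteq\cS(\theta_+)$ straight from Definition~\ref{def: the set S(th)}, with no further work.

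The only inclusion that needs an argument is $\cS(\theta_0)\subseteq\cS(\theta_-)$, and here the finiteness of the brick set $\cL_0$ is the crucial input. I would argue by contradiction: suppose $X\in\cS(\theta_0)$ but $X\notin\cS(\theta_-)$. Since $\theta_-$ lies on no wall, Lemma~\ref{lem1 for sets S(U)} produces an indecomposable weakly admissible quotient $B$ of $X$ with $\theta_-(B)<0$; as an indecomposable object of $\cL$, $B$ is one of the finitely many bricks in $\cL_0$. But $B$ is a nonzero weakly admissible quotient of $X\in\cS(\theta_0)$, so $\theta_0(B)>0$ by the very definition of $\cS(\theta_0)$. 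Combining, $0>\theta_-(B)=\theta_0(B)-\varepsilon\,\eta(B)$, i.e.\ $\varepsilon>\theta_0(B)/\eta(B)$. Since $B$ ranges only over the finite set $\cL_0$, choosing $\varepsilon$ smaller than $\min\{\theta_0(B)/\eta(B)\,:\,B\in\cL_0,\ \theta_0(B)>0\}$ (a positive number, or $+\infty$ if that indexing set is empty) makes this impossible, so no such $X$ exists. Thus $\cS(\theta_0)\subseteq\cS(\theta_-)$, and with the reverse inclusion already in hand, $\cS(\theta_-)=\cS(\theta_0)$.

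To finish, Lemma~\ref{lem: M on one side of wall D(M)} states $M\in\cS(\theta_+)$ and $M\notin\cS(\theta_-)$, which makes the inclusion $\cS(\theta_-)\subseteq\cS(\theta_+)$ proper; assembling everything gives $\cS(\cU)=\cS(\theta_-)=\cS(\theta_0)\subsetneq\cS(\theta_+)=\cS(\cU')$. I expect the one real obstacle to be precisely the uniform choice of $\varepsilon$ in the previous paragraph: one must be certain that a single $\varepsilon$ works simultaneously for all $X\in\cL$, which is exactly why the finiteness of $\cL_0$ (rather than of the set of weakly admissible quotients of a fixed $X$) is the right thing to invoke. Everything else is bookkeeping with the lemmas already established, and the genericity of $\theta_0$ is not needed beyond ensuring that $\theta_\pm$ land in the two chambers.
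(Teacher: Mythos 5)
Your proof is correct and follows essentially the same route as the paper: monotonicity along $\eta$ gives $\cS(\theta_-)\subseteq\cS(\theta_0)$ (and $\subseteq\cS(\theta_+)$), the reverse inclusion $\cS(\theta_0)\subseteq\cS(\theta_-)$ comes from the fact that membership in $\cS(\theta)$ is finitely many strict inequalities, and Lemma~\ref{lem: M on one side of wall D(M)} supplies the strictness. The only difference is cosmetic: where the paper simply says ``$X\in\cS(\theta)$ is an open condition,'' you make the required uniformity of $\varepsilon$ explicit via Lemma~\ref{lem1 for sets S(U)} and the finiteness of $\cL_0$, which is a fair elaboration of the same idea.
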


\begin{proof} 
Since $X\in \cS(\theta)$ is an open condition on $\theta$, $\cS(\theta_0)\subset \cS(\theta_-)$ and $\cS(\theta_0)\subset \cS(\theta_+)$. Now consider $X\in\cS(\theta_-)$. Then, for $X'$ equal to $X$ or any weakly admissible quotient of $X$ we have $\theta_-(X')>0$. But $\theta_0(X')>\theta_-(X')$. So, $X\in \cS(\theta_0)$. So, $\cS(\theta_-)=\cS(\theta_0)$. Since $M\in\cS(\theta_+)$ but $M\notin\cS(\theta_0)$ we have $\cS(\theta_0)\subsetneq \cS(\theta_+)$.
\end{proof}

\begin{lem}\label{lem: no morphisms X to M}
Let $\cU,\cU'$ be as above. They are separated by the wall $D(M)$. Then, for any $X\in\cS(\cU)$, there do not exist any nonzero weakly admissible morphisms $X\to M$. Conversely, if $X\notin \cS(\cU)$ but $X\in\cS(\cU')$, then there is a nonzero weakly admissible epimorphism $X\onto M$.
\end{lem}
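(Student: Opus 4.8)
The plan is to handle the two assertions separately. For the first, suppose $X\in\cS(\cU)=\cS(\theta_-)=\cS(\theta_0)$ (Theorem \ref{thm: S(t0)=S(t-)}) and that $f:X\to M$ is a nonzero weakly admissible morphism, and I will derive a contradiction. Since $f$ is weakly admissible, $\im f$ and $\coker f=M/\im f$ lie in $\cL$ while $\ker f\in\cF ilt(\cL)$. Then $X\onto\im f$ is a weakly admissible epimorphism (image $\im f\in\cL$, cokernel $0$, kernel $\ker f\in\cF ilt(\cL)$), so $\im f$ is a nonzero weakly admissible quotient of $X$ and hence $\theta_0(\im f)>0$ by the definition of $\cS(\theta_0)$. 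On the other hand $M\onto M/\im f$ is a weakly admissible epimorphism (its kernel $\im f$ lies in $\cL\subseteq\cF ilt(\cL)$), so $M/\im f$ is a weakly admissible quotient of $M$; since $\theta_0\in D(M)$ (Definition \ref{def: D(M)}) this gives $\theta_0(M/\im f)\ge0$. Adding dimension vectors along $\im f\hookrightarrow M\onto M/\im f$ and using $\theta_0(M)=0$ yields $\theta_0(\im f)=-\theta_0(M/\im f)\le0$, contradicting $\theta_0(\im f)>0$. This proves the first assertion.

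For the converse, suppose $X\notin\cS(\cU)=\cS(\theta_-)$ but $X\in\cS(\cU')=\cS(\theta_+)$. Since $\theta_-$ lies in the chamber $\cU$ and hence on no wall, Lemma \ref{lem1 for sets S(U)} produces an indecomposable weakly admissible quotient $q:X\onto B$ with $\theta_-(B)<0$; being an indecomposable object of $\cL$, $B$ is one of the bricks $B_i$. By Lemma \ref{Lemma 2}, $X\in\cS(\theta_+)$ gives $B\in\cS(\theta_+)$, so $\theta_+(B'')>0$ whenever $B''$ is $B$ or one of its (finitely many) weakly admissible quotients; letting $\varepsilon\to0^+$ in $\theta_+=\theta_0+\varepsilon\eta$ gives $\theta_0(B'')\ge0$ for all such $B''$. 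But $\theta_-(B)<0$ forces $B\notin\cS(\theta_-)=\cS(\theta_0)$, and since all the relevant $\theta_0$-values are already $\ge0$, some weakly admissible quotient of $B$ (possibly $B$ itself) is annihilated by $\theta_0$; breaking it into indecomposable summands if necessary — each a weakly admissible quotient of $B$, hence with nonnegative and therefore here zero $\theta_0$-value — I obtain a brick $N$ that is a weakly admissible quotient of $B$ with $\theta_0(N)=0$ and $\theta_0(C)\ge0$ for every weakly admissible quotient $C$ of $N$ (each such $C$ is also a weakly admissible quotient of $B$). Hence $\theta_0\in D(N)\subseteq H(N)$. Since $\theta_0$ is chosen generically on the facet of $int\,D(M)$ separating $\cU$ and $\cU'$, it lies on $H(M)$ but on no other hyperplane $H(B_i)$, the $\undim B_i$ being pairwise linearly independent; therefore $N\cong M$. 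As $N$ is a weakly admissible quotient of $B$ and $B$ of $X$, composing the weakly admissible epimorphisms $X\onto B$ and $B\onto N$ (compositions of weakly admissible epimorphisms are weakly admissible) and identifying $N$ with $M$ gives the desired nonzero weakly admissible epimorphism $X\onto M$.

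Once the definition of weak admissibility is unwound, the first assertion is essentially a one-line computation with dimension vectors, and the extraction of the indecomposable quotient $B$ in the converse is immediate from the lemmas already proved. The step I expect to be the main obstacle — or at least the one needing care — is the genericity argument near the end: one must know that a generic point of the wall $int\,D(M)$ lies on no other hyperplane $H(B_i)$. This is precisely why $\theta_0$ is taken to be a generic point of the facet separating $\cU$ and $\cU'$; it rests on $int\,D(M)$ being relatively open in $H(M)$, so that a generic point of it avoids each intersection $H(M)\cap H(B_i)$ with $B_i\not\cong M$, each of which has positive codimension in $H(M)$.
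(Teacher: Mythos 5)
Your proof is correct. The first half is essentially the paper's own argument, just spelled out: the image $Y=\im f$ is a weakly admissible quotient of $X$, so $\theta_0(Y)>0$, while $\theta_0(M/Y)\ge 0$ and $\theta_0(M)=0$ force $\theta_0(Y)\le 0$, a contradiction. For the converse you take a mildly different route from the paper. The paper handles the dependence of the bad quotient on $\varepsilon$ directly: since there are only finitely many weakly admissible quotients of $X$, it picks one $X'$ with $\theta_i(X')<0$ along a sequence $\theta_i\to\theta_0$, deduces $\theta_0(X')=0$ and $\theta_0(X'')\ge0$ for all weakly admissible quotients $X''$ of $X'$, hence $\theta_0\in D(X')$, and genericity gives $X'=M$. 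You instead fix a single $\theta_-$, use Theorem \ref{thm: S(t0)=S(t-)} to transfer $B\notin\cS(\theta_-)$ to $B\notin\cS(\theta_0)$, combine this with the one-sided limit from $\cU'$ (via Lemma \ref{Lemma 2}) to find a weakly admissible quotient of $B$ annihilated by $\theta_0$, and then extract an indecomposable summand $N$ with $\theta_0\in D(N)$; genericity again pins $N\cong M$, and since your $N$ may be a proper quotient of $B$ rather than $B$ itself, you correctly close the argument by composing weakly admissible epimorphisms, which Definition \ref{def: admissible morphism} permits. Both proofs hinge on the same genericity of $\theta_0$ in $int\,D(M)$; yours trades the paper's sequence-selection device for an appeal to the already-proved equality $\cS(\theta_-)=\cS(\theta_0)$, at the cost of the extra summand-extraction step, which is legitimate (components of weakly admissible quotients are again weakly admissible quotients, exactly as used in the proof of Lemma \ref{lem1 for sets S(U)}).
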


\begin{proof} Take $\theta_0\in D(M)$ generic as before. So, $\cS(\theta_0)=\cS(\cU)$. Let $f:X\to M$ be a nonzero weakly admissible morphism with image $Y$. Since $X\in \cS(\theta_0)$ and $Y$ is a weakly admissible quotient of $X$, we have $\theta_0(Y)>0$ making $\theta_0(M/Y)<0$. This contradicts the assumption that $\theta_0\in D(M)$. Therefore $f:X\to M$ does not exist.

Conversely, suppose $X\notin \cS(\cU)$ but $X\in\cS(\cU')$. By Lemma \ref{lem1 for sets S(U)}, there exists a weakly admissible quotient $X'$ of $X$ so that $\theta_-(X')<0$. Since there are only finitely many such $X'$, there is one choice of $X'$ so that $\theta_i(X')<0$ for a sequence $\theta_i$ converging to $\theta_0$. Since $\theta_+(X')>0$, we must have $\theta_0(X')=0$ and $\theta_0(X'')\ge0$ for all weakly admissible quotients $X''$ of $X'$. Thus $\theta_0\in D(X')$. Since $\theta_0\in int\,D(M)$ is generic, it does not lie on any other wall. So, $X'=M$ as claimed.
\end{proof}

{%
\begin{figure}[htbp]
\begin{center}

\begin{tikzpicture}[scale=1.4] 
%
{
\begin{scope}
\begin{scope}
\clip rectangle (2,1.3) rectangle (0,-1.3);
\draw[thick] (0,0) ellipse [x radius=1.5cm,y radius=1.21cm];
\end{scope}
\begin{scope}[xshift=.87mm]
		\draw[thick] (1.8,.6) node[left]{\small$D(P_3)$};
\end{scope}
\begin{scope}[xshift=-.7cm]
	\draw[thick] (0,0) circle[radius=1.4cm];
		\draw (-1,1.1) node[left]{\small$D(S_1)$};
\end{scope}
\begin{scope}[xshift=.7cm]
	\draw[thick] (0,0) circle[radius=1.4cm];
		\draw (1,1.1) node[right]{\small$D(I_2)$};
\end{scope}
\begin{scope}[yshift=-1.35cm]
	\draw[thick] (0,0) circle[radius=1.4cm];
	\draw (1.15,-.8) node[right]{\small$D(S_3)$};
\end{scope}
\begin{scope}[yshift=-1cm]
\end{scope}
\coordinate(B1) at (1.39,-1.22);
\coordinate(B2) at (-.69,-.14);
\coordinate(B) at (-.1,-.9);
\coordinate(A) at (1.24,-.69);
\coordinate(C) at (.48,-.75);
\coordinate(AC) at (.84,-.73);
\end{scope} 
\begin{scope}[xshift=4cm,yshift=-.5cm]
\draw (0,0) node{$S_1$}; 
\draw(0,0)circle[radius=2.4mm];
\draw[red!80!black] (0.5,0.6) node{$P_2$}; 
\draw (1.5,0.6) node{$I_2$};
\draw(1.5,0.6)circle[radius=2.4mm];
\draw (1,1.2) node{$P_3$};
\draw(1,1.2)circle[radius=2.4mm];
\draw[blue] (1,0) node{$S_2$}; 
\draw (2,0) node{$S_3$}; 
\draw(2,0)circle[radius=2.4mm];
\draw[->] (0.13,0.15)--(.33,.4);
\draw[->] (1.13,0.15)--(1.33,0.4);
\draw[->] (0.63,0.75)--(.83,1);
\draw[->] (0.63,0.37)--(.83,.14);
\draw[->] (1.63,.37)--(1.83,.14);
\draw[->] (1.13,.97)--(1.33,.74);
\end{scope}

\coordinate(U1)at(-1.4,0.2);
\coordinate(U2)at(0,0.4);
\coordinate(U3)at(1.1,0.15);
\coordinate(U4)at(1.8,0);
\coordinate(U5)at(-.8,-.9);
\coordinate(U6)at(0,-.5);
\coordinate(U7)at(.9,-.7);
\coordinate(U8)at(1.1,-1.1);
\coordinate(U9)at(0,-2);
\draw (U1) node{$S_1$};
\draw (U2) node{$S_1I_2P_3$};
\draw (U3) node{$I_2P_3$};
\draw (U4) node{$I_2$};
\draw (U5) node{$S_1S_3$};
\draw (U6) node{$\cL$};
\draw (U7) node{\tiny$S_3I_2P_3$};
\draw (U8) node{\small$S_3I_2$};
\draw (U9) node{$S_3$};
\draw[green!80!black,->](-3,-.5)--(-.2,-.5);
\draw[green!80!black](-2.8,-.5) node[below]{$\theta_t$};
}
\end{tikzpicture}

\caption{In each chamber $\cU$, the set of bricks in $\cS(\cU)$ is indicated. Usually, crossing a wall $D(M)$ will add just $M$ to $\cS(\cU)$. But sometimes we get more. For example, on the left side, when crossing the wall $D(I_2)$, two modules are added. This is because, if we go from the right, we cross three walls. The quiver is $1\ot 2\ot 3$. The set of bricks in $\cL$ is $S_1,P_3,I_2,S_3$. These are circled in the AR quiver. The figure show the ``picture'' for this set which is defined to be the stereographic projection of the intersection of these walls with the unit sphere $S^2$ in $\RR^3$. The green path $\theta_t$ is explain below.}
\label{Figure00}
\end{center}
\end{figure}
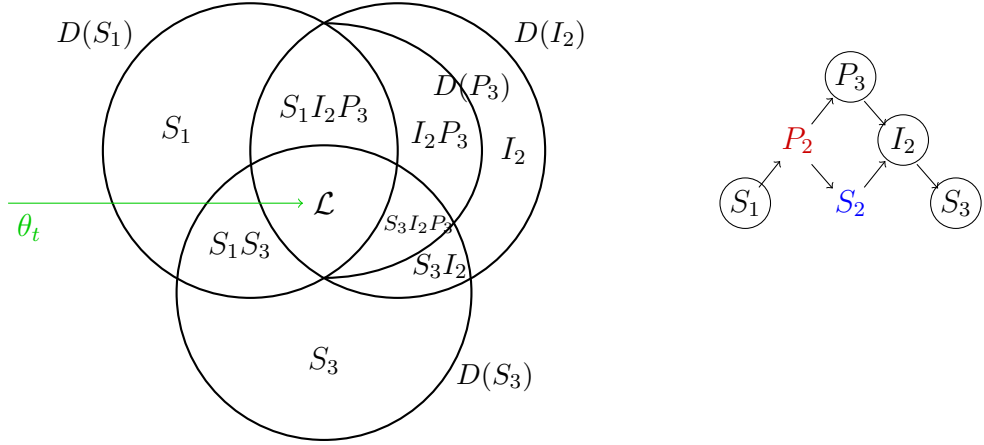
}

\subsection{Maximal green sequences}\label{sec 1.3: MGS}

We define a \emph{green path} to be a generic smooth path $\theta_t$ in $\RR^n$ which crosses walls in the positive direction. In other words, whenever $\theta_{t_0}\in int\,D(M)$, $\theta_t$ crosses from the negative side of $D(M)$ to its positive side. Equivalently,
\[
	\frac{d}{dt}\theta_t|_{t=t_0}\cdot \undim M=\frac{d}{dt}\theta_t(M)|_{t=t_0}>0.
\]
Thus $\theta_t$ crosses a sequence of walls $D(M_1), \cdots, D(M_m)$ in their interiors and these walls separate chambers $\cU_0, \cdots,\cU_m$ which the path goes through. We say the green path is \emph{maximal} if $\cU_0$ contains a point with all coordinates negative (so that $\cS(\cU_0)=0$) and $\cU_m$ contains a point with all positive coordinates (so $\cS(\cU_m)=\cL$). For a maximal green path, the resulting sequence of bricks, $M_1,\cdots,M_m$ will be called a \emph{maximal green sequence (MGS)} for $\cL$.

{In Figure \ref{Figure00}, the indicated green path gives the MGS $S_1,S_3,I_2$. According to the following theorem, the module $P_3$ is missing because it would need to come before $S_1$ and after $I_2$. For example, if we come from the right we get the MGS $I_2,S_3,P_3,S_1$. The object $I_2$ is allowed to come before $S_3$ since the map $I_2\to S_3$ is not admissible.
}
\begin{thm}\label{thm: MGS implies relatively hom orthogonal}
For any MGS $M_1,\cdots,M_m$, there are no nonzero weakly admissible morphisms $M_i\to M_j$ for $i<j$.
\end{thm}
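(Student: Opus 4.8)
The plan is to reduce the statement entirely to Lemma~\ref{lem: no morphisms X to M}, combined with the monotonicity of the sets $\cS(\cU)$ along a green path. Recall that a maximal green path crosses the walls $D(M_1),\dots,D(M_m)$ in their interiors, passing through chambers $\cU_0,\dots,\cU_m$, and that at the $k$-th crossing the path goes from the negative side of $D(M_k)$ to its positive side, i.e.\ from $\cU_{k-1}$ to $\cU_k$.

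First I would record the basic chain of inclusions. By Theorem~\ref{thm: S(t0)=S(t-)} applied at each of the wall crossings, $\cS(\cU_0)\subsetneq\cS(\cU_1)\subsetneq\cdots\subsetneq\cS(\cU_m)$; in particular $\cS(\cU_i)\subseteq\cS(\cU_{j-1})$ whenever $i\le j-1$. Next, by Lemma~\ref{lem: M on one side of wall D(M)} (together with Theorem~\ref{thm: S(U) is constant}, which lets us identify $\cS(\theta_+)$ with $\cS(\cU_k)$ for $\theta_+$ just on the positive side of $D(M_k)$ at the generic crossing point), the brick $M_k$ crossed at step $k$ lies in $\cS(\cU_k)$.

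Now fix $i<j$. Since $i\le j-1$, the two observations above give $M_i\in\cS(\cU_i)\subseteq\cS(\cU_{j-1})$. The chambers $\cU_{j-1}$ and $\cU_j$ are separated by the wall $D(M_j)$, with $\cU_{j-1}$ on the negative side, which is precisely what it means for the path to cross $D(M_j)$ in the positive direction. Applying Lemma~\ref{lem: no morphisms X to M} with $\cU=\cU_{j-1}$, $\cU'=\cU_j$, $M=M_j$, and $X=M_i\in\cS(\cU_{j-1})$, we conclude that there is no nonzero weakly admissible morphism $M_i\to M_j$, which is exactly the claim.

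Since every step above is an invocation of a result already established in Sections~1.1--1.2, I do not expect any genuine obstacle; the only point requiring a little care is the bookkeeping that the chamber $\cU_{j-1}$ from which the path departs at the $j$-th crossing really is the ``negative-side'' chamber $\cU$ in the hypothesis of Lemma~\ref{lem: no morphisms X to M}, and that $\cS$ is being evaluated consistently at the generic point of the wall where the crossing occurs. It is also worth remarking explicitly that the phenomenon of several bricks being added at a single wall crossing (as on the left side of Figure~\ref{Figure00}) does not interfere with the argument: we use only $M_i\in\cS(\cU_i)$ and the nesting of the $\cS(\cU_k)$, never that $\cS(\cU_k)\setminus\cS(\cU_{k-1})$ is a singleton.
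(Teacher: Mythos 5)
Your proposal is correct and is essentially the paper's own argument: both rest on $M_i\in\cS(\cU_i)$, the nesting $\cS(\cU_i)\subseteq\cS(\cU_{j-1})$ from the wall-crossing theorem, and an application of Lemma \ref{lem: no morphisms X to M} at the wall $D(M_j)$ separating $\cU_{j-1}$ from $\cU_j$. The extra bookkeeping you supply (citing Lemma \ref{lem: M on one side of wall D(M)} and Theorem \ref{thm: S(U) is constant} for $M_k\in\cS(\cU_k)$) just makes explicit what the paper leaves implicit.
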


\begin{proof} Let $\cU_i$ be the chamber containing the portion of the maximal green path between $D(M_i)$ and $D(M_{i+1})$. Then $M_i\in\cS(\cU_i)$ and $\cS(\cU_i)\subset \cS(\cU_{j-1})$. So, by Lemma \ref{lem: no morphisms X to M} above, there are no nonzero weakly admissible morphisms $M_i\to M_j$.
\end{proof}

\begin{defn}\label{def: linear MGS}
By a \emph{smooth green path} or \emph{smooth Bridgeland stability condition} we mean a smooth path $\gamma_t$ in $\RR^n$ having the following properties where we use the same notation for $\gamma_t$ as for $\theta_t$.
\begin{enumerate}
\item For each brick $M$, $\frac d{dt}\gamma_t(M)$ is positive for all $t$.
\item For $t<<0$, the coordinates of $\gamma_t$ are all negative (so that $\gamma_t$ is on the negative side of all hyperplanes $H(M)$.
\item For $t>>0$, $\gamma_t$ is on the positive side of each hyperplane $H(M)$.
\item When objects $Z$ not in $\cL$ are brought back as ``ghosts'', we will also assume condition (1) for $M=Z$.
\end{enumerate}
We say that $\gamma_t$ is a \emph{linear green path} if it is a linear function of $t$, i.e., $\gamma_t={\bf h}+t{\bf k}$ for ${\bf h,k}\in \RR^n$ where, by (2) and (3), all of the coordinates of $\bf k$ are positive.
\end{defn}

{A linear green path is also called a \emph{Bridgeland stability condition} since the complex vector $i{\bf k}-{\bf h}\in \CC^n$ gives a ``central charge'' $Z:K_0(\Lambda)\to\CC$ and a module $M$ is ``stable'' with respect to this central charge if and only if the corresponding linear green path passes through the interior of the domain $D(M)$ of $M$ \cite{Bridgeland}, \cite{Bridgeland2}, \cite{Linearity}, \cite{GrInvRedo}, \cite{FangLiLiu}.}

We will assume $\gamma_t$ is generic so that it does not cross two hyperplanes at the same time. So, if $\gamma_t$ passes through $H(M)$ at time $t_M$, these times are distinct. If $M$ is not necessarily indecomposable, say $M=\bigoplus M_i$, let $t_M$ be the unique time so that
\[
	\gamma_{t_M}(M)=0.
\]
Since $\gamma_t$ is a linear function, $\gamma_{t_M}(M)=\sum\gamma_{t_M}(M_i)$.
Then we have the following \emph{relative Bridgeland stability condition}.

\begin{defn}\label{def: linear stability condition}
Given a generic smooth green path $\gamma_t$, we say that $M$ is \emph{relatively stable} or \emph{$\gamma_t$-stable} if $t_M>t_Y$ for all proper weakly admissible quotients $Y$ of $M$.
\end{defn}

\begin{thm}\label{thm: stable implies path crosses D(M)}
The object $M\in\cL$ is {$\gamma_t$-stable} with respect to a generic smooth green path $\gamma_t$ if and only if the path goes through $D(M)$, i.e., $\gamma_{t_M}\in D(M)$. Furthermore, the path goes through the interior of $D(M)$ in that case.
\end{thm}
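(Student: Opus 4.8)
The plan is to unwind the definition of $\gamma_t$-stability and match it term by term against the condition $\gamma_{t_M}\in D(M)$, using the sign behavior of the linear functional $t\mapsto\gamma_t(N)$ for each relevant brick $N$. First I would record the elementary monotonicity fact: for any nonzero object $N\in\cL$, the function $t\mapsto\gamma_t(N)$ is strictly increasing (by condition (1) in Definition \ref{def: linear stability condition}, applied to each indecomposable summand of $N$ and summed), so $\gamma_t(N)<0$ for $t<t_N$, $\gamma_{t_N}(N)=0$, and $\gamma_t(N)>0$ for $t>t_N$. In particular $\gamma_{t_M}(Y)\ge 0$ iff $t_M\ge t_Y$, with strict inequality on one side iff strict on the other.

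Next I would prove the forward direction. Suppose $M$ is $\gamma_t$-stable, so $t_M>t_Y$ for every proper weakly admissible quotient $Y$ of $M$. By the monotonicity fact, $\gamma_{t_M}(Y)>0$ for all such $Y$, and $\gamma_{t_M}(M)=0$ by definition of $t_M$. Comparing with Definition \ref{def: D(M)}, these are exactly the conditions $\theta(M)=0$ and $\theta(M')>0$ for all weakly admissible quotients $M'$, i.e. $\gamma_{t_M}\in int\,D(M)$. (One should note that a weakly admissible quotient $M'$ of $M$ in the sense of Definition \ref{def: D(M)} is a priori only required to have image and cokernel in $\cL$; but the image of the defining epimorphism is a quotient of $M$, so $M'$ itself — up to the relevant identification — is a weakly admissible quotient in the sense of Definition \ref{def: linear stability condition}, and the case $M'=0$ is vacuous. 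I would spell this compatibility out carefully, since it is the one place where the two "weakly admissible quotient" conventions must be reconciled.) This simultaneously proves the "Furthermore" clause: stability forces the path through the \emph{interior}.

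For the converse, suppose $\gamma_{t_M}\in D(M)$. Then $\gamma_{t_M}(M)=0$, which identifies the crossing time of $\gamma_t$ with the hyperplane $H(M)$ as precisely $t_M$ (using that $\gamma_t(M)$ is strictly monotone, so the zero is unique). For any proper weakly admissible quotient $Y$ of $M$ we have $\gamma_{t_M}(Y)\ge 0$ by the defining inequality of $D(M)$; I then need $t_M>t_Y$ rather than merely $t_M\ge t_Y$. If $\gamma_{t_M}(Y)=0$ then $t_Y=t_M$, but $\gamma_t$ is generic (it does not cross two hyperplanes simultaneously, as assumed just before Definition \ref{def: linear stability condition}), so $H(Y)$ and $H(M)$ cannot be crossed at the same time unless $\undim Y$ and $\undim M$ are proportional; since $Y$ is a proper quotient of the brick $M$, this is impossible (a proper nonzero quotient of a brick has strictly smaller dimension vector, hence is not a positive multiple of $\undim M$, and $Y\ne 0$ because $Y=0$ is excluded as a \emph{proper} quotient only if one adopts that convention — here I would instead argue directly that $\gamma_{t_M}(Y)=0$ together with $\gamma_{t_M}$ on no other wall forces $Y=0$, matching the genericity hypothesis). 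Hence $\gamma_{t_M}(Y)>0$, so $t_M>t_Y$, and $M$ is $\gamma_t$-stable; moreover we have just shown $\gamma_{t_M}\in int\,D(M)$ again.

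The main obstacle I anticipate is not any deep computation but the bookkeeping around genericity and the two slightly different notions of "weakly admissible quotient" (the one in Definition \ref{def: D(M)}, phrased via epimorphisms and allowing the zero quotient, versus the one in Definition \ref{def: linear stability condition}, phrased with "proper"). The crux is ruling out the boundary case $\gamma_{t_M}(Y)=0$ for a genuine proper quotient $Y$: this is exactly where the hypothesis that $\gamma_t$ crosses hyperplanes one at a time is used, and I would state that reduction as the key lemma of the argument.
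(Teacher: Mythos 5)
Your forward direction (stable $\Rightarrow$ $\gamma_{t_M}\in int\,D(M)$) is correct and is exactly the paper's argument. The gap is in the converse, at precisely the step you flag as needing only ``bookkeeping'': ruling out $\gamma_{t_M}(Y)=0$ for a proper weakly admissible quotient $Y$. You invoke genericity as ``$H(Y)$ and $H(M)$ cannot be crossed at the same time,'' but the genericity hypothesis stated before Definition \ref{def: linear stability condition} only concerns the finitely many hyperplanes $H(B)$ of the bricks in $\cL_0$; a weakly admissible quotient $Y$ of $M$ need not be indecomposable, so $H(Y)$ is not one of those hyperplanes and the hypothesis says nothing about $t_Y$ versus $t_M$. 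Moreover, your backup claim that proportionality of $\undim Y$ and $\undim M$ is impossible because a proper quotient has strictly smaller dimension vector is not valid: a strictly smaller vector can still be a positive multiple of $\undim M$ with scalar less than $1$ (this can only happen for decomposable $Y$, but that is exactly the case your argument does not cover), and in that situation $H(Y)=H(M)$ and no genericity, however strong, yields the contradiction you want.

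The paper closes this gap by decomposing $Y=\bigoplus Y_i$ into indecomposable components: each projection $Y\onto Y_i$ is a weakly admissible epimorphism, and weakly admissible epimorphisms compose (Definition \ref{def: admissible morphism}), so each $Y_i$ is itself a weakly admissible quotient of $M$; hence $\gamma_{t_M}(Y_i)\ge 0$ by the definition of $D(M)$, i.e.\ $t_{Y_i}\le t_M$. Now each $Y_i$ is a brick of $\cL_0$ distinct from $M$ (an epimorphism $M\to M$ would be an isomorphism), so the stated genericity does apply and forces $t_{Y_i}<t_M$, whence $\gamma_{t_M}(Y)=\sum\gamma_{t_M}(Y_i)>0$ and $t_Y<t_M$. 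To repair your proof you need this reduction to brick components (including the observation that components of weakly admissible quotients are again weakly admissible quotients), or else an explicitly stronger genericity assumption than the theorem grants.
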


\begin{proof}
Suppose that $\gamma_t$ passes though $D(M)$ at time $t_M$. Then, for any weakly admissible quotient $Y$ of $M$, we have $\gamma_{t_M}(Y)\ge0$. Since $\gamma_t(Y)$ is a strictly increasing function of $t$, we must have $t_Y\le t_M$. This is also true for any component $Y_i$ of $Y$. When $Y\neq M$, this implies $t_{Y_i}<t_M$. And this implies $t_Y<t_M$. Thus $M$ is {$\gamma_t$-stable}.

Conversely, suppose $M$ is {$\gamma_t$-stable}. Then $t_Y<t_M$ for all proper weakly admissible quotients $Y$ of $M$. So, $\gamma_{t_M}(Y)>0$ and $\gamma_{t_M}(M)=0$. So, $\gamma_{t_M}$ lies in $int\,D(M)$.
\end{proof}

{
The corresponding statement holds for any maximal green path $\theta_t$ although this is not so surprising since we are already assuming the path goes through the walls $D(M_i)$. 

\begin{prop}
Take a maximum green path $\theta_t$. Assume it is generic so that it does not cross two hyperplanes $H(X)$ at the same time and it crosses each $H(X)$ transversely. For each brick $B$ let $t_B$ be the last time the path crosses the hyperplane $H(B)$. Then $B$ is equal to $M_j$ and thus $\gamma_{t_B}\in D(M_j)$ for some $j$ if and only if $t_X<t_B$ for all weakly admissible quotients $X$ of $B$.
\end{prop}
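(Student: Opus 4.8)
The statement is the "maximal green path" analogue of Theorem \ref{thm: stable implies path crosses D(M)}, which was proved for smooth (in particular linear) green paths. The key difference is that a maximal green path $\theta_t$ is only known a priori to cross the walls $D(M_1),\dots,D(M_m)$; it is not assumed to satisfy the monotonicity condition $\frac{d}{dt}\theta_t(X)>0$ for \emph{all} bricks $X$. So the plan is to reduce to the situation of Theorem \ref{thm: stable implies path crosses D(M)} as much as possible, and handle the genericity condition "$t_B$ is the last crossing of $H(B)$" by hand.

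First I would prove the "only if" direction. Suppose $B=M_j$, so $\theta_{t_j}\in int\,D(M_j)$ where $t_j$ is the time the path crosses $D(M_j)$. Since $\theta_{t_j}\in D(B)$ we have $\theta_{t_j}(B)=0$ and $\theta_{t_j}(X)\ge 0$ for every weakly admissible quotient $X$ of $B$, and in fact $\theta_{t_j}(X)>0$ for $X$ a proper such quotient since $\theta_{t_j}$ is in the \emph{interior}. Now I want to conclude $t_X<t_j=t_B$. Here $t_X$ is the last time $\theta_t$ crosses $H(X)$; because $\theta_{t_j}(X)>0$, and because $\theta_t$ is on the positive side of every hyperplane for $t\gg 0$ (it is a maximal green path ending in the all-positive chamber) and on the negative side for $t\ll 0$, the last crossing time $t_X$ must satisfy $t_X<t_j$: indeed if $X$ is indecomposable this is immediate since after its last crossing $\theta_t(X)$ stays positive (by definition of "last crossing" and transversality, $\theta_t(X)$ does not return to $0$), and at $t_j$ it is already positive; if $X=\bigoplus X_i$ then each $t_{X_i}<t_j$ by the indecomposable case applied to $X_i$ (each $X_i$ is itself a weakly admissible quotient of $B$, and $\theta_{t_j}(X_i)>0$ because it sums to $\theta_{t_j}(X)>0$ while being individually positive — here I use that each $\theta_{t_j}(X_i)\ge 0$ as $X_i$ is a weakly admissible quotient, hence each is $>0$), and then $t_X$, the unique time with $\theta_{t_X}(X)=\sum\gamma_{t_X}(X_i)=0$, is $\le\max t_{X_i}<t_j$. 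Also $t_B=t_j$ itself since $\theta_{t_j}(B)=0$ and $t_B$ is the unique last crossing of $H(B)$; one should note $t_j$ really is the last crossing of $H(B)$ because for $t>t_j$ the path enters chambers $\cU_j,\dots,\cU_m$ all of which contain $B=M_j$ in $\cS(\cU)$ (by Theorem \ref{thm: S(t0)=S(t-)} and induction), so $\theta_t(B)>0$ there.

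For the "if" direction, suppose $t_X<t_B$ for all weakly admissible quotients $X$ of $B$. At time $t_B$ we have $\theta_{t_B}(B)=0$; I must show $\theta_{t_B}\in D(B)$, i.e. $\theta_{t_B}(X)\ge 0$ for every weakly admissible quotient $X$. Since $t_X<t_B$ is the last time $\theta_t$ meets $H(X)$, for all $t>t_X$ — in particular at $t=t_B$ — $\theta_t(X)$ has constant sign, and that sign is positive because $\theta_t(X)>0$ for $t\gg0$. (For decomposable $X=\bigoplus X_i$, each $X_i$ is also a weakly admissible quotient so $t_{X_i}<t_B$, hence each $\theta_{t_B}(X_i)>0$, hence $\theta_{t_B}(X)>0$.) Thus $\theta_{t_B}\in D(B)$, and since the strict inequalities hold for all \emph{proper} weakly admissible quotients we get $\theta_{t_B}\in int\,D(B)$. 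Finally, a maximal green path crosses each wall it meets in the interior, and the walls it crosses are exactly $D(M_1),\dots,D(M_m)$; since $\theta_{t_B}\in int\,D(B)$ and $int\,D(B)$ is disjoint from $int\,D(M)$ for $M\ne B$ (two distinct walls share only boundary points — this follows from Proposition \ref{prop: U D(M)=U int D(M)}, or more simply from genericity, since $\theta_{t_B}$ lies on no other hyperplane), $B$ must be one of the $M_j$.

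The main obstacle I expect is the bookkeeping around the phrase "last time the path crosses $H(B)$": I need to make sure that "positive side for $t\gg 0$, negative side for $t\ll0$, crosses transversely" genuinely forces $\theta_t(X)>0$ for all $t$ strictly after the last crossing, including the decomposable case where I am tracking the sign of a sum $\sum\theta_t(X_i)$ rather than of a single linear function with a sign-change guarantee. The clean way to handle the decomposable case is to never argue about $H(X)$ directly when $X$ is decomposable, but instead to reduce immediately to the indecomposable summands $X_i$ — each of which is a weakly admissible quotient of $B$ as well — and use that $\theta_{t_B}(X)=\sum\theta_{t_B}(X_i)$. A secondary subtlety is confirming $t_j$ is the last crossing of $H(M_j)$, for which I would invoke that $\cS(\cU)$ only grows along the path (Theorem \ref{thm: S(t0)=S(t-)}) so $M_j\in\cS(\cU_i)$ for all $i\ge j$, forcing $\theta_t(M_j)>0$ after time $t_j$.
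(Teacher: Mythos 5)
Your ``if'' direction is correct and is essentially the paper's argument (with some extra, harmless care about $int\,D(B)$, genericity, and decomposable quotients). The gap is in the ``only if'' direction, at the step where you declare it ``immediate'' that $t_X<t_j$ because $\theta_{t_j}(X)>0$ and $\theta_t(X)$ stays positive after its last crossing. That inference is not valid for a maximal green path: the green-path condition only constrains crossings through the interiors of the walls $D(M)$, so the path may perfectly well cross the hyperplane $H(X)$ in the \emph{negative} direction at a point of $H(X)\setminus D(X)$ (such a point can lie in a chamber, or in the interior of some other wall $D(M')$, where only the sign of $\frac{d}{dt}\theta_t(M')$ is controlled, not that of $\frac{d}{dt}\theta_t(X)$). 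Hence positivity of $\theta_{t_j}(X)$ at the single instant $t_j$ does not rule out that $\theta_t(X)$ dips back through $0$ later and recrosses, which would make the \emph{last} crossing time $t_X$ larger than $t_j$. This is exactly the issue your own plan flags -- a maximal green path need not be monotone on every brick -- so a pointwise sign cannot control a last-crossing time.

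The repair is the argument you already apply to $B$ itself at the end of that paragraph, extended to its quotients; it is also the paper's proof. For $t>t_j$ the path travels through the chambers $\cU_j,\dots,\cU_m$, and $M_j\in\cS(\cU_i)$ for all $i\ge j$ by Theorem \ref{thm: S(t0)=S(t-)}. By Definition \ref{def: the set S(th)}, membership of $M_j$ in $\cS(\theta)$ forces $\theta(X)>0$ for \emph{every} weakly admissible quotient $X$ of $M_j$ (equivalently, $\cS(\theta)$ is closed under weakly admissible quotients), so $\theta_t(X)>0$ for all $t>t_j$ off the walls. Therefore $H(X)$ is never crossed after $t_j$, giving $t_X<t_j=t_B$ (strictly, by genericity), which is the desired conclusion. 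With that one substitution your proof coincides with the paper's; the reduction of decomposable quotients to their indecomposable summands is fine as written.
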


\begin{proof} Suppose first that $B=M_j$ for some $j$. Then the path crosses $D(B)$ at a certain time say $t_0$ and, since the path will be traveling in chambers that come after $D(B)$, $\theta_t$ will be positive on $B$ and all of it weakly admissible quotients for $t>t_0$. Therefore $t_0=t_B$ and $t_0>t_X$ for all weakly admissible quotients $X$ of $B$.

Conversely, suppose that $t_X<t_B$ for all weakly admissible quotients $X$ of $B$. Then, at $t=t_B$, we will have $\theta_t(X)>0$ for all weakly admissible quotients $X$ of $B$. This implies that $\theta_{t_B}$ lies on the wall $D(B)$. Therefore $B=M_j$ for some $j$.
\end{proof}

The modules $M_i$ which occur in the sequence of walls $D(M_i)$ passed through by a linear green path will be called a \emph{linear maximal green sequence}. Liu and Li \cite{FangLiLiu} have given criteria to determine when a MGS is linear. We expect that the analogous statement will hold in our case.
}

\section{Theorems in extension closed case}\label{ss: section 2}

In this section we will assume that $\cL$ is closed under extension. To maintain the property that there are only finitely many indecomposable objects and that they are all bricks, we need to assume the bricks are rigid (otherwise a self-extension will be indecomposable but not a brick). One consequence is that all weakly admissible morphisms are admissible since $\cF ilt(\cL)=\cL$. So, we can drop the word ``weakly''.

{
\begin{thm}\label{HN-stratification} If $\cL$ is closed under extension, any MGS
$M_1,\cdots,M_m$ gives an HN-stratification of $\cL$, i.e., any object $X$ of $\cL$ has a filtration
\[
	0=X_0\subset X_1\subset\cdots\subset X_m=X
\]
so that each $X_i/X_{i-1}\in add\,M_i$.
\end{thm}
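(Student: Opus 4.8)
The strategy is to build the filtration from the top down by peeling off the largest summand $X_m$-part first, using the stability/wall-crossing machinery already developed. Recall the maximal green path passes through chambers $\cU_0,\dots,\cU_m$ with $\cS(\cU_0)=0$, $\cS(\cU_m)=\cL$, and crossing $D(M_i)$ takes us from $\cU_{i-1}$ to $\cU_i$. The key structural fact I want is that for each $i$, the pair $(\cS(\cU_{i-1}), \text{``objects killed into } M_i\text{''})$ behaves like a torsion pair relative to $\cS(\cU_i)$: by Lemma~\ref{lem: no morphisms X to M}, every $X\in\cS(\cU_i)$ that is not in $\cS(\cU_{i-1})$ admits an admissible epimorphism $X\onto M_i$, and there are no admissible maps from $\cS(\cU_{i-1})$ to $M_i$. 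The plan is to fix $X\in\cL=\cS(\cU_m)$ and descend: I claim there is a short exact sequence $X'\into X\onto Q$ with $Q\in\add M_m$ and $X'\in\cS(\cU_{m-1})$, after which we induct on $m$ (or rather on the index, applying the same argument to $X'$ inside $\cS(\cU_{m-1})$ with the truncated sequence $M_1,\dots,M_{m-1}$).

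To produce that sequence I would argue as follows. Order the distinct crossing times so $M_m$ is the last wall. For $X\in\cL$, consider the set of admissible quotients $X\onto Y$ with $Y\in\add M_m$, and take $Q$ to be the maximal such quotient — concretely, let $X_{m-1}=\ker(X\to Q)$ where $Q$ is the image of the sum of all admissible maps $X\to M_m$ (using that $\add M_m$ is closed under direct sums and $M_m$ is a brick, so $Q\cong M_m^{k}$ for some $k$). Since $\cL$ is closed under extension, $X_{m-1}\in\cL$ (it is the kernel of an admissible morphism, hence in $\cL$ — here I use the extension-closed hypothesis, which makes all weakly admissible morphisms admissible so $\ker$, $\im$, $\coker$ all lie in $\cL$). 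It remains to check $X_{m-1}\in\cS(\cU_{m-1})$, equivalently by Lemma~\ref{lem1 for sets S(U)} that $X_{m-1}$ has no indecomposable admissible quotient $B$ with $\theta_{m-1}(B)<0$. The only brick with $\theta_{m-1}<0$ among those relevant is $M_m$ (since $\cS(\cU_{m-1})=\cL\setminus\{\text{things surjecting onto }M_m\}$ by the wall-crossing lemma, as $\cU_{m-1}$ and $\cU_m$ differ only across $D(M_m)$); so I must show $X_{m-1}$ does not surject admissibly onto $M_m$. This is exactly the maximality of $Q$: any admissible $X_{m-1}\onto M_m$ would, composed appropriately, enlarge the space of maps $X\to M_m$ — here I need to be careful and use that $\Hom$-orthogonality from Theorem~\ref{thm: MGS implies relatively hom orthogonal} prevents $M_m$ from mapping back, so the sequence $X_{m-1}\into X\onto M_m^k$ cannot be "continued."

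The main obstacle I anticipate is precisely this maximality/exhaustion step: showing that after splitting off the maximal $\add M_m$-quotient, the kernel genuinely lands in $\cS(\cU_{m-1})$ and not merely in $\cL$. The subtlety is that admissible quotients need not be closed under the relevant operations in general, and one must leverage both that $\cL$ is extension-closed (so kernels of admissible maps are again in $\cL$, making the induction well-posed) and the relative Hom-orthogonality of the MGS (so that $M_m$ cannot reappear as a quotient of $X_{m-1}$). A secondary point to handle cleanly is the base case: when $X\in\cS(\cU_0)=0$ there is nothing to prove, and the induction on the length of the green sequence terminates because each step strictly decreases $\dim X_i$ or keeps us inside a strictly smaller $\cS(\cU_{i-1})$. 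Once the single-step splitting is established, iterating it $m$ times yields the chain $0=X_0\subset X_1\subset\cdots\subset X_m=X$ with $X_i/X_{i-1}\in\add M_i$, completing the proof.
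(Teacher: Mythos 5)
Your overall strategy (peel an $\add M_m$-quotient off the top and induct down the green sequence) is reasonable, but the step you yourself flag as the main obstacle is exactly where the proof is missing, and the way you propose to close it does not work as stated. Three sub-claims are unjustified. First, the map $X\to Q$ obtained by summing all admissible maps $X\to M_m$ is a sum of admissible morphisms, and admissible morphisms are not closed under sums or compositions (the paper warns about this explicitly); so neither ``$X_{m-1}=\ker(X\to Q)\in\cL$ because it is the kernel of an admissible morphism'' nor the admissibility of $X\onto Q$ is automatic. Second, the parenthetical ``$Q\cong M_m^k$'' is not automatic: the image of the combined map $X\to M_m^N$ is a submodule of $M_m^N$ surjecting onto each factor, and such a submodule can be a proper pullback (e.g.\ the kernel of $(f,-g):M_m\oplus M_m\to W$ for a common quotient $W$), hence need not lie in $\add M_m$; yet $X/X_{m-1}\in\add M_m$ is precisely what the theorem requires of the top layer. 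Third, the maximality argument ``any admissible $X_{m-1}\onto M_m$ would enlarge the space of maps $X\to M_m$'' needs a map defined on the subobject $X_{m-1}$ to be extended (after passing to $X/\ker$) over all of $X$; that is an $\Ext$-splitting statement, roughly $\Ext^1(Q,M_m)=0$ with $Q\in\add M_m$ and $M_m$ rigid, not a consequence of the relative Hom-orthogonality of Theorem \ref{thm: MGS implies relatively hom orthogonal}, which only forbids admissible maps $M_i\to M_j$ for $i<j$ and says nothing about maps out of $X_{m-1}$. Until these points are settled, the crucial claim $X_{m-1}\in\cS(\cU_{m-1})$ (no admissible epimorphism $X_{m-1}\onto M_m$, via Lemmas \ref{lem1 for sets S(U)} and \ref{lem: no morphisms X to M}) is not established, so the induction cannot start.

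For comparison, the paper's proof sidesteps the ``maximal $\add M_m$-quotient'' construction entirely: it reduces to $X$ indecomposable, takes $k$ \emph{minimal} with $X\in\cS(\cU_k)$, uses Lemma \ref{lem: no morphisms X to M} to get a single admissible epimorphism $X\onto M_k$, and then shows the kernel $Z$ stays in $\cS(\cU_k)$ by a limit argument with stability conditions $\theta_i\to\theta_0\in D(M_k)$, where extension-closure is used only to know that $X/A\in\cL$ is an admissible quotient of $X$ whenever $Z/A$ is an admissible quotient of $Z$; it then recurses on the indecomposable components of $Z$. If you want to salvage your top-down version, you would need to prove the existence and admissibility of the maximal $\add M_m$-quotient (using that $M_m$ is a rigid brick and that $\cL$ is extension closed), which is essentially as hard as the theorem itself; the paper's one-copy-at-a-time argument avoids that entirely.
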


\begin{proof} We may assume $X$ is indecomposable.
Take $k$ minimal so that $X\in\cS(\cU_k)$. Then, by Lemma \ref{lem: no morphisms X to M}, there is an admissible epimorphism $f:X\onto M_k$. We claim that $Z=\ker f$ also lies in $\cS(\cU_k)$.

To show that $Z\in \cS(\cU_k)$, choose a sequence $\theta_i$ in $\cU_k$ converging to $\theta_0\in D(M_k)$. Since $\theta_i(X)>0$ for all $i$, we obtain in the limit that $\theta_0(X)\ge0$. Since $\theta_0(M_k)=0$, this implies $\theta_0(Z)=\theta_0(X)\ge0$ and this implies that $\theta_i(Z)>0$ for all $i$. Similarly, let $Z/A$ be an admissible quotient of $Z$. Then $X/A$, being an extension of $Z/A$ by $M_k$, also lies in $\cL$ and is an admissible quotient of $X$. So, $\theta_i(X/A)>0$ for all $i$ making $\theta_0(X/A)\ge0$. So, $\theta_0(Z/A)=\theta_0(X/A)\ge0$ which forces $\theta_i(Z/A)>0$ for all $i$. So, $Z$ lies in $\cS(\theta_i)=\cS(\cU_k)$ for all $i$. This implies that each component $Z_i$ of $Z$ lies in $\cS(\cU_k)$.

For each components $Z_i$ of $Z$, let $j$ be maximal so that $Z_i\in \cU_j$. Then $j\le k$ and $M_j$ is an admissible quotient of $Z_i$. Take the kernel of $Z_i\onto M_j$ and repeat this process. Then we obtain an HN-filtration of $X$.
\end{proof}

\begin{cor}\label{cor: HN is minimal}
A maximal green sequence $M_1,\cdots,M_m$ gives a minimal HN-stratification of $\cL$ in the sense that, if any term $M_i$ is deleted, the remainder is not an HN-stratification of $\cL$.
\end{cor}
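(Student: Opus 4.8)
The plan is to exhibit one object of $\cL$ whose HN-filtration is destroyed by the deletion, and the natural candidate is $M_i$ itself. So I would suppose, for contradiction, that after removing $M_i$ the sequence $M_1,\dots,\widehat{M_i},\dots,M_m$ is still an HN-stratification of $\cL$, and write $j_1<\dots<j_{m-1}$ for the surviving indices in increasing order. Applying this stratification to $X=M_i$ gives a filtration $0=X_0\subseteq X_1\subseteq\cdots\subseteq X_{m-1}=M_i$ with $X_k/X_{k-1}\in\add M_{j_k}$ for each $k$. Since $M_i\neq 0$, at least one step is nonzero; I would let $a$ and $b$ be the smallest and largest $k$ with $X_k\neq X_{k-1}$, so $a\le b$, $X_{a-1}=0$, and $X_b=M_i$.

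The argument then inspects the bottom and the top of this filtration, using throughout that $\cF ilt(\cL)=\cL$ (since $\cL$ is extension closed): every $X_k$ and every quotient $X_\ell/X_k$ again lies in $\cL$, and a morphism between objects of $\cL$ is admissible once its image and cokernel lie in $\cL$. At the bottom, $X_a\cong M_{j_a}^{\,s}$ with $s\ge 1$ is a nonzero submodule of $M_i$, and $M_i/X_a\in\cL$; composing a summand inclusion $M_{j_a}\hookrightarrow X_a$ with $X_a\hookrightarrow M_i$ gives a nonzero admissible monomorphism $M_{j_a}\to M_i$ (kernel $0$, image $M_{j_a}$, and cokernel $M_i/M_{j_a}$, an extension of $M_i/X_a$ by $M_{j_a}^{\,s-1}$, all in $\cL$). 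As $j_a\neq i$, Theorem \ref{thm: MGS implies relatively hom orthogonal} forbids $j_a<i$, so $j_a>i$. At the top, $X_b/X_{b-1}\cong M_{j_b}^{\,t}$ with $t\ge 1$ is a nonzero quotient of $M_i=X_b$; composing $M_i\twoheadrightarrow X_b/X_{b-1}$ with a projection to a summand gives a nonzero admissible epimorphism $M_i\to M_{j_b}$ (image $M_{j_b}$, cokernel $0$, and kernel an extension of $M_{j_b}^{\,t-1}$ by $X_{b-1}$, all in $\cL$). As $j_b\neq i$, Theorem \ref{thm: MGS implies relatively hom orthogonal} forbids $i<j_b$, so $j_b<i$. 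But $a\le b$ forces $j_a\le j_b$, and then $i<j_a\le j_b<i$ is absurd. Hence $M_i$ admits no such filtration, i.e.\ the reduced sequence is not an HN-stratification of $\cL$.

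I do not anticipate a genuine obstacle here; the point that needs care is the admissibility bookkeeping for the two morphisms, which is why extension closedness of $\cL$ is invoked repeatedly (to place $X_{b-1}$, $M_i/X_a$, $M_i/M_{j_a}$, and the relevant kernels in $\cL$ via $\cF ilt(\cL)=\cL$). It is worth noting that nothing about the $M_k$ being pairwise non-isomorphic is needed: the inequalities $j_a\neq i$ and $j_b\neq i$ hold simply because $j_1,\dots,j_{m-1}$ enumerate $\{1,\dots,m\}\setminus\{i\}$, and those are precisely what turn the weak inequalities ``$j_a\ge i$'' and ``$j_b\le i$'' coming from Theorem \ref{thm: MGS implies relatively hom orthogonal} into the strict ones that collide. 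The degenerate case $m=1$ is automatic, since then the reduced sequence is empty and the displayed filtration would read $0=X_0=M_i$, contradicting $M_i\neq 0$.
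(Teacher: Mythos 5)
Your proof is correct and follows essentially the same route as the paper's: the paper also filters $M_i$ by the remaining terms and notes that the top layer is an admissible quotient $M_k$ and the bottom layer an admissible subobject $M_j$ of $M_i$, so relative Hom-orthogonality forces $k<i<j\le k$, a contradiction. Your version simply makes explicit the admissibility bookkeeping (via $\cF ilt(\cL)=\cL$) that the paper leaves implicit.
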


\begin{proof} Suppose $M_i$ were not necessary. Then we could take an HN-filtration of $M_i$ using the other terms. This will have top $M_k$ and bottom $M_j$ where $j<k$. Since $M_k$ is an admissible quotient of $M_i$ and $M_j$ is an admissible subobject of $M_i$, we must have $k<i<j<k$, a contradiction.
\end{proof}

A similar argument shows the following.

\begin{defn}\label{def: relatively Hom orthogonal}
We say that a sequence of objects $M_1,\cdots,M_m$ in $\cL$ is \emph{relatively Hom-orthogonal} if there are no nonzero admissible morphisms $M_j\to M_k$ for $j<k$. A MGS satisfies this condition by Theorem \ref{thm: MGS implies relatively hom orthogonal}.
\end{defn}

\begin{cor}
Any MGS $M_1,\cdots,M_m$ is maximally relatively Hom-orthogonal in the sense that it cannot be extended by inserting an object of $\cL$ somewhere in the sequence.
\end{cor}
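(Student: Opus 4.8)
The plan is to deduce this directly from the HN-stratification of Theorem \ref{HN-stratification}. Suppose we wish to insert a nonzero object $X\in\cL$ into the sequence between $M_i$ and $M_{i+1}$ for some $0\le i\le m$. By Theorem \ref{HN-stratification}, $X$ has a filtration $0=X_0\subset X_1\subset\cdots\subset X_m=X$ with $X_l/X_{l-1}\in\add M_l$; since $X\neq0$, let $j$ be the least and $k$ the greatest index with a nonzero graded piece, so $1\le j\le k\le m$. From this filtration I will extract a nonzero admissible monomorphism $M_j\into X$ (from its bottom) and a nonzero admissible epimorphism $X\onto M_k$ (from its top). Granting these, a trivial case split finishes the argument: if $i\ge j$, then in the enlarged sequence $M_j$ occurs strictly before $X$, so $M_j\into X$ is a forward admissible map and relative Hom-orthogonality fails; and if $i<j$, then $k\ge j>i$, so $M_k$ occurs strictly after $X$, and $X\onto M_k$ is a forward admissible map, so again it fails. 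Since these cases are exhaustive, no insertion of a nonzero object can preserve relative Hom-orthogonality.

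To produce the monomorphism, note that $X_{j-1}=0$ (all graded pieces below index $j$ vanish), so $X_j=X_j/X_{j-1}$ is a nonzero object of $\add M_j$; choosing a direct summand isomorphic to $M_j$ gives $M_j\into X_j\into X$. Its cokernel $X/M_j$ is an extension of $X/X_j$ by $X_j/M_j\in\add M_j\subseteq\cL$, and the filtration induced on $X/X_j$ has graded pieces in $\add M_{j+1},\dots,\add M_m$, so $X/X_j\in\cL$; as $\cL$ is closed under extension (the standing hypothesis of this section) we get $X/M_j\in\cL$, hence the monomorphism is admissible. Dually, $X_k=X_m=X$, so $X/X_{k-1}=X_k/X_{k-1}$ is a nonzero object of $\add M_k$; projecting onto a summand $M_k$ gives $X\onto M_k$, whose kernel is an extension of a complementary summand of $X_k/X_{k-1}$ (which lies in $\add M_k\subseteq\cL$) by $X_{k-1}$, and $X_{k-1}$ has a filtration with graded pieces in $\add M_1,\dots,\add M_{k-1}$, hence lies in $\cL$; closure under extension again puts the kernel in $\cL$, so the epimorphism is admissible.

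Once Theorem \ref{HN-stratification} is available there is no real difficulty here; the only step needing care is the bookkeeping that certifies the extracted maps as \emph{admissible}, that is, that the relevant kernels and cokernels lie in $\cL$. This is exactly where closure of $\cL$ under extension enters, via the elementary observation that every initial and every final segment of an HN-filtration assembles to an object of $\cL$. The degenerate possibility $X\cong M_l$ requires no separate treatment: then $j=k=l$ and the two maps above are the identity of $M_l$, which still forces a forward admissible morphism in the enlarged sequence.
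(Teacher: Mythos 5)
Your proof is correct and follows essentially the same route as the paper: apply the HN-stratification of Theorem \ref{HN-stratification} to the inserted object $X$, extract the bottom admissible subobject $M_j$ and top admissible quotient $M_k$ with $j\le k$, and observe that $X$ would have to sit both before $M_j$ and after $M_k$, which is impossible. Your additional bookkeeping (the explicit case split on the insertion position, the extension-closure check that the extracted maps are admissible, and the degenerate case $X\cong M_l$) simply makes explicit what the paper's terser argument leaves implicit.
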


\begin{proof} 
Suppose we try to insert a new indecomposable object $X$, not equal to any $M_i$ into this sequence. Take the HN-filtration of $X$. This gives an admissible subobject $M_j$ of $X$ and an admissible quotient object $M_k$ of $X$ where $j<k$. In order to maintain relative Hom-orthogonality, we would need to insert $X$ before $M_j$ and after $M_k$. This is impossible since $j<k$. So, no new term $X$ can be inserted. So, the MGS is maximally relatively Hom-orthogonal.
\end{proof}
}

{
We were not able to prove the converse of these theorems, namely that all maximal relatively Hom-orthogonal sequences and minimal HN-stratifications are MGS's. Maybe more conditions on $\cL$ are needed. One important statement which we have not yet done is to verify that the sets $\cS(\cU)$ are distinct for different chambers $\cU$.
}

\begin{lem}\label{lem: existence of admissible subobject}
Suppose that $\theta(M)>0$ but $M\notin\cS(\theta)$. Then, $M$ has an admissible subobject $X$ which lies in $\cS(\theta)$.
\end{lem}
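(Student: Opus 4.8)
The plan is to produce $X$ as the kernel of a carefully chosen admissible epimorphism out of $M$. Since $\theta(M)>0$ but $M\notin\cS(\theta)$, Definition \ref{def: the set S(th)} gives an admissible quotient $M'$ of $M$ with $\theta(M')\le 0$; note $M'$ is necessarily a proper quotient, since $\theta(M)>0$. Among all admissible quotients $M'$ of $M$ with $\theta(M')\le 0$, I would choose one with $\dim M'$ as large as possible (this maximum exists because every quotient of $M$ has dimension vector dominated by $\undim M$, so only finitely many dimension vectors occur). Fix an admissible epimorphism $q\colon M\onto M'$ and set $X=\ker q$. Then $X\in\cL$ because $q$ is admissible, and $X$ is a nonzero admissible subobject of $M$ since the inclusion $X\hookrightarrow M$ has kernel $0$, image $X\in\cL$, and cokernel $M'\in\cL$.

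Next I would check the two conditions for $X\in\cS(\theta)$. From the exact sequence $0\to X\to M\to M'\to 0$ we get $\theta(X)=\theta(M)-\theta(M')\ge\theta(M)>0$, so the first condition holds. For the second, let $X'$ be any nonzero admissible quotient of $X$, say with kernel $A=\ker(X\onto X')\in\cL$. The key structural step: $A$ sits inside $M$, and $M/A$ fits in an exact sequence $0\to X/A\to M/A\to M/X\to 0$, i.e. $0\to X'\to M/A\to M'\to 0$. Since $X',M'\in\cL$ and $\cL$ is closed under extension (the standing hypothesis of Section \ref{ss: section 2}), we get $M/A\in\cL$; hence $A\hookrightarrow M$ is admissible, so $M\onto M/A$ is an admissible quotient of $M$, with $\dim(M/A)=\dim X'+\dim M'>\dim M'$ because $X'\neq0$. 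By maximality of $\dim M'$, the quotient $M/A$ cannot satisfy $\theta(M/A)\le 0$, so $\theta(M/A)>0$. Since $\theta(M/A)=\theta(X')+\theta(M')$ and $\theta(M')\le 0$, this forces $\theta(X')=\theta(M/A)-\theta(M')>0$, as required. Therefore $X\in\cS(\theta)$.

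I expect the only real content to be the middle step — realizing $M/A$ as an admissible quotient of $M$ living in $\cL$ — and this is exactly where the extension-closed hypothesis is used (without it, $M/A$ need not lie in $\cL$ and the comparison of dimensions with the extremal $M'$ collapses). Everything else is bookkeeping with dimension vectors and the additivity of $\theta$ on short exact sequences; in particular no genericity or off-the-walls assumption on $\theta$ is needed, so the statement holds for arbitrary $\theta\in\RR^n$. One small point to state carefully is that admissible quotients of $M$ and admissible subobjects of $M$ correspond bijectively (via kernel/cokernel), so that choosing the extremal $M'$ really does determine the desired subobject $X$.
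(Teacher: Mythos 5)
Your proof is correct and is essentially the paper's own argument: choosing the admissible quotient $M'$ with $\theta(M')\le 0$ of maximal dimension and setting $X=\ker(M\onto M')$ is the same extremal choice as the paper's ``smallest admissible subobject $X$ with $\theta(M/X)\le0$'', and your key step (using extension-closure on $0\to X'\to M/A\to M'\to 0$ to see that $A$ is an admissible subobject of $M$, then invoking extremality) is exactly the paper's use of $0\to X/K\to M/K\to M/X\to 0$. No gaps; your closing remarks (no genericity needed, kernel/cokernel bookkeeping) also match the paper's setting.
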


\begin{proof}
Since $M\notin\cS(\theta)$, it must have an admissible subobject $X$ so that $\theta(M/X)\le0$. Take the smallest such $X$. Then $\theta(X)=\theta(M)-\theta(M/X)>0$. For any admissible subobject $K$ of $X$, $K$ will be an admissible subobject of $M$ since $M/K$ is an extension of $X/K$ by $M/X$. By minimality of $X$ we have $\theta(M/K)>0$. So 
\[
	\theta(X/K)=\theta(M/K)-\theta(M/X)>0.
\] Since $X/K$ is a general admissible quotient of $X$, this implies $X\in\cS(\theta)$.
\end{proof}

\begin{thm}\label{thm: S(U) is not S(U')}
Each chamber $\cU$ is convex and, for distinct chambers $\cU,\cU'$, we have $\cS(\cU)\neq \cS(\cU')$.
\end{thm}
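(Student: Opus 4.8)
The plan is to handle the two assertions separately. Convexity of each chamber should be a soft consequence of the defining inequalities: a chamber $\cU$ is a connected component of the complement of $\bigcup D(M)$, and I want to show that $\cU$ is actually cut out by a finite system of strict linear inequalities. The natural candidate, following the proof of Theorem \ref{thm: S(U) is constant}, is this: fix $\theta_0\in\cU$, let $\cA=\{B_i : \theta_0(B_i)>0\}$ and $\cB=\{B_j : \theta_0(B_j)<0\}$ over the finitely many bricks, and let $U=\{\theta : \theta(B_i)>0\ \forall B_i\in\cA,\ \theta(B_j)<0\ \forall B_j\in\cB\}$. This $U$ is open, convex, contains $\theta_0$, and misses every wall $D(M)$ (since any $\theta\in U$ has $\cS(\theta)=\cS(\theta_0)$ by the cited theorem, while points on a wall $D(M)$ fail to have $M$ in $\cS$ yet have it ``arbitrarily close'' — more directly: if $\theta\in U\cap D(M)$ then $\theta(M)=0$ contradicts $\theta(M)\ne 0$ since $M$, being a brick, is some $B_i$). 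Hence $U$ is a connected subset of the wall complement containing $\theta_0$, so $U\subset\cU$; conversely I must argue $\cU\subset U$, i.e. that $\theta_0$ and any other point of $\cU$ lie in the same region $U$ — this follows because $\cS$ is locally constant hence constant on $\cU$, and the sign pattern of $\theta$ on the bricks is determined by $\cS(\theta)$ via Lemma \ref{lem1 for sets S(U)} and Lemma \ref{Lemma 2} (a brick $B$ is in $\cS(\theta)$ iff $\theta(B)>0$, since $B$ has no proper weakly admissible quotients that are bricks forcing a sign — I should check this carefully). So $\cU=U$ is convex.

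For the separation statement, suppose toward a contradiction that $\cS(\cU)=\cS(\cU')$ for distinct chambers. Since the wall complement is open and its components are separated by walls, I can choose a generic straight segment from a point of $\cU$ to a point of $\cU'$ crossing walls one at a time; let $D(M)$ be the first wall crossed, separating $\cU$ from an adjacent chamber $\cU_1$. By Theorem \ref{thm: S(t0)=S(t-)}, $\cS(\cU)\subsetneq\cS(\cU_1)$, with $M\in\cS(\cU_1)\setminus\cS(\cU)$. Now I want to show $\cS$ is strictly monotone along any green crossing, so that it can never ``come back down'' to $\cS(\cU)$ — but chambers are not linearly ordered in general, so I cannot simply say $\cS$ only grows. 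Instead I will use the sign-pattern description from the first part: $\cS(\cU)$ determines exactly the set of bricks $B$ with $\theta(B)>0$ for $\theta\in\cU$ (namely the bricks lying in $\cS(\cU)$), hence determines the open region $U=\cU$ itself. Therefore $\cS(\cU)=\cS(\cU')$ forces $U_{\cU}=U_{\cU'}$ as subsets of $\RR^n$, hence $\cU=\cU'$.

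The key reduction, and the step I expect to be the main obstacle, is the claim that \emph{a brick $B$ lies in $\cS(\theta)$ if and only if $\theta(B)>0$}, for $\theta$ off all walls. One direction is immediate. For the other, if $\theta(B)>0$ but $B\notin\cS(\theta)$, then Lemma \ref{lem: existence of admissible subobject} (which needs $\cL$ extension-closed — available here) gives an admissible subobject $X\subset B$ with $X\in\cS(\theta)$; since $B$ is a brick one hopes to derive a contradiction, but $B$ may genuinely have admissible subobjects, so the argument is not automatic. The fix is to observe that $\cS(\cU)$ is determined by its brick members together with the wall-crossing combinatorics: precisely, the function $\cU\mapsto\cS(\cU)$ factors through $\cU\mapsto\{B_i\in\cL_0 : B_i\in\cS(\cU)\}=\{B_i : \theta(B_i)>0\}$, because by Lemma \ref{Lemma 2} membership of an arbitrary $X$ in $\cS(\theta)$ is detected by the signs $\theta(B)$ on the indecomposable admissible quotients $B$ of $X$, each of which is a brick in $\cL_0$. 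Thus even without the sharp ``brick $B\in\cS(\theta)\iff\theta(B)>0$'' statement, $\cS(\cU)$ recovers the full sign vector $(\sgn\theta(B_i))_i$, which by the first part of the proof recovers the convex region $\cU$. This is the crux; once it is in hand, both convexity and the injectivity of $\cU\mapsto\cS(\cU)$ follow.
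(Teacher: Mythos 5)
Your argument hinges on the identification $\cU=U$, where $U$ is cut out by the signs of $\theta_0$ on \emph{all} bricks of $\cL_0$, and on the claim that $\cS(\cU)$ recovers the full sign vector $(\sgn\theta(B_i))_i$ (equivalently, that a brick $B$ lies in $\cS(\theta)$ iff $\theta(B)>0$). This is exactly the step you flagged, and it is false. A brick $B$ which is not minimal has proper weakly admissible quotients, and these impose extra conditions for membership in $\cS(\theta)$; moreover the hyperplane $H(B)$ is a wall only along the portion $D(B)\subsetneq H(B)$, so $H(B)$ can pass \emph{through} a chamber and the sign of $\theta(B)$ need not be constant there. Concretely, in the example of Figure \ref{Figure00} (quiver $1\ot2\ot3$, $\cL_0=\{S_1,P_3,I_2,S_3\}$), the only weakly admissible quotient of $P_3$ is $I_2$, so $D(P_3)=\{\theta(P_3)=0,\ \theta(I_2)\ge0\}$. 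The points $\theta=(1,-0.1,-0.5)$ and $\theta'=(1,-0.3,-0.8)$ lie in the same chamber (the one with $\cS=\add S_1$; the segment joining them meets $H(P_3)$ only where $\theta(I_2)<0$, hence off every wall), yet $\theta(P_3)>0$ while $\theta'(P_3)<0$. So the sign vector is not an invariant of the chamber, $\cS(\cU)$ does not determine it ($P_3\notin\cS(\theta)$ even though $\theta(P_3)>0$), and $U\subsetneq\cU$ in general. This breaks both halves of your proof: the convexity argument (you only get $U\subset\cU$, and even that inclusion has a gap, since constancy of $\cS$ on $U$ does not imply $U$ misses the walls, and a brick with $\theta_0(B)=0$ lies in neither $\cA$ nor $\cB$), and the injectivity argument (the factorization ``sign vector $\Rightarrow\cS$'' from Lemma \ref{Lemma 2} goes the wrong way; it does not let $\cS(\cU)$ recover the signs).

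The paper's proof avoids this by using only the hyperplanes $H(M_i)$ of the walls that actually bound the chamber $\cU$, not all bricks. The substantive claim is that if $\theta$ lies on the wrong side of such an $H(M_i)$ — even at a point of $H(M_i)$ outside the wall portion $D(M_i)$ — then $\cS(\theta)\neq\cS(\cU)$: either $M_i$ itself distinguishes them, or, when $\theta(M_i)>0$ but $M_i\notin\cS(\theta)$, Lemma \ref{lem: existence of admissible subobject} produces an admissible subobject $X\subset M_i$ with $X\in\cS(\theta)$, and a limiting argument at a generic point $\theta_0\in int\,D(M_i)$ (approached from inside $\cU$) shows $X\notin\cS(\cU)$, since otherwise $\theta_0(X)\ge0$ and $\theta_0(M_i/X)\ge0$ would force $\theta_0(M_i/X)=0$, contradicting genericity. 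From this one deduces that $\cU$ is precisely the convex region $\{\theta:\varepsilon_i\theta(M_i)>0\}$ determined by its own bounding walls, and that any other chamber contains a point on the wrong side of one of these hyperplanes, whence $\cS(\cU')\neq\cS(\cU)$. If you want to salvage your approach, you would need to replace the global sign pattern by this wall-by-wall analysis; as written, the central claim has a counterexample already in the paper's first figure.
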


{
\begin{proof}
Suppose that $D(M_i)$ are the walls of $\cU$ and take $\varepsilon_i=\pm$ the sign telling on which side of $D(M_i)$ the chamber $\cU$ lies. Thus $M_i\in\cS(\cU)$ for $\varepsilon_i=(+)$ and $M_i\notin\cS(\cU)$ for $\varepsilon_i=(-)$. Then we claim:
\begin{enumerate}
\item[(a)] If $\theta$ lies on the other side of one of the hyperplanes $H(M_i)$ then $\cS(\theta)\neq \cS(\cU)$.
\item[(b)] $\cU$ is the region bounded by the hyperplanes $H(M_i)$, i.e., $\cU$ is the set of all $\theta$ so that $\varepsilon_i\theta(M_i)>0$. [This clearly implies $\cU$ is convex.]
\end{enumerate}
First, we note that (a) implies (b). Condition (a) implies that $\cU$ does not contain any points of the wrong side of any hyperplane $H(M_i)$. Therefore, $\cU$ is contained in the convex region bounded by these hyperplanes. Furthermore, this convex region must be equal to $\cU$, otherwise, there is a point $\theta_0\in \cU$ and another point $\theta_1\notin\cU$ which are both in the convex region bounded by the hyperplanes. Take the straight line connecting $\theta_0$ to $\theta_1$. This path cannot leave $\cU$ since it does not cross any of the walls that bound $\cU$. So, both lie in $\cU$. This proves (b) assuming (a).

Next we note that (a) and (b) together imply that $\cS(\cU)\neq\cS(\cU')$ for $\cU\neq \cU'$. By (b), $\cU'$ must contain some point $\theta$ on the wrong side of one of the walls. Then (a) says that $\cS(\theta)=\cS(\cU')\neq\cS(\cU)$. It remains to prove (a) without assuming (b).

Suppose that $\theta$ is on the wrong side of $H(M_i)$. If $\cU$ is on the positive side of $D(M_i)$ and $\theta$ is on the negative side, then $M_i\in\cS(\cU)$ and $M_i\notin\cS(\theta)$. So, we may assume $\cU$ is on the negative side of $D(M_i)$ and $\theta$ is on the positive side. Then $\theta(M_i)>0$. If $M_i\in\cS(\theta)$ we would be done since $M_i\notin \cS(\cU)$. So, suppose $M_i\notin\cS(\theta)$. Then, by Lemma \ref{lem: existence of admissible subobject}, there is an admissible subobject $X$ of $M_i$ so that $X\in\cS(\theta)$. But $X\notin\cS(\cU)$ since, if it were, we would get the following contradiction.

Choose $\theta_0$ a generic point on the boundary of $\cU$ in the interior of the wall $D(M_i)$. Take a sequence of points $\theta_j\in\cU$ converging to $\theta_0$. Then $\theta_j(X)>0$ for all $j$. So, $\theta_0(X)\ge0$. But $\theta_0(M_i/X)\ge0$ by definition of $D(M_i)$. So, $\theta_0(M_i/X)=0$. This contradicts our assumption that $\theta_0\in int\,D(M_i)$. Therefore, $X\notin\cS(\cU)$. So, $\cS(\cU)\neq \cS(\theta)=\cS(\cU')$.
\end{proof}
}

\section{Ghosts}\label{sec 3: ghosts}

We give definitions of three types of ghosts: ``subobject ghosts'', ``quotient ghosts'' and the new ``extension ghosts'' and give examples of all three. For example, Figures \ref{Figure11} and \ref{Figure10} show how one example can have all three kinds of ghosts. We study ``bifurcations'' of subobject ghosts and give a list of the analogous bifurcations of quotient ghosts. We will explore these in the next paper which will be about applications of ghosts to algebraic K-theory as we started to do in \cite{GrInvRedo}. We also plan to explain how extension ghosts can be used to visualize the calculation of higher Reidemeister torsion carried out in \cite{IgKlein}.

The basic setup is that we have a short exact sequence of bricks
\[
	A\to B\to C
\]
where $\Hom(A,C)=0$. (The condition $\Hom(C,A)=0$ is automatically satisfied.) There are three cases.
\begin{enumerate}
\item $B,C\in\cL$ but $A\notin\cL$. In that case we write $A=Z$ and we call the sequence $Z\to B\to C$ the \emph{ghost of $Z$ as a subobject} of $B$ and we denote this by $\cG h(Z;B)$.
\item $A,B\in\cL$ but $C\notin\cL$. Then let $Z^\ast=C$ and call this the \emph{ghost of $Z^\ast$ as a quotient object} of $B$. The notation is $\cG h^\ast(Z^\ast;B)$.
\item $A,B,C\in\cL$. In this case we get the \emph{ghost of $B$ as an unstable extension} of $A$ by $C$. We use the notation $\widetilde B=\cG h(A\to B\to C)$. (See Figures \ref{fig: Kroneker ghosts}, \ref{Figure11}, \ref{Figure12}.)
\end{enumerate}

\subsection{Subobject ghosts in torsion classes}\label{ss: subsection on subobject ghosts}

We assume that $\cL=\cG$ is a torsion class and we consider subobject ghosts $Z\to B\to C$ as above and denote it $\cG h(Z;B)$.
The simplest case is the following.

\begin{defn}
We say that $\cG h(Z;B)$ is a \emph{minimal ghost} if $B$ has no subobjects in $\cL$. Equivalently, $B$, and therefore also $C$, has no admissible quotients. They are minimal.
\end{defn}

When $\cG h(Z;B)$ is minimal, $D(B)$ and $D(C)$ are full hyperplanes and the domain of the ghost $\cG h(Z;B)$ which we denote by $D(Z;B)$ is defined to be the half-hyperplane given by
\[
	D(Z;B)=\{\theta\in\RR^n\,|\, \theta(Z)=0 \text{ and } \theta(B)\ge0\}.
\]
Thus the domain of the minimal ghost is maximal: It is as large as possible since $\theta(B)\ge0$ is always one of the boundary conditions. 

\begin{rem}\label{rem: walls get smaller}
When there are more boundary conditions, the domain gets smaller and the ghost gets bigger. ($B$ and $C$ become larger module.) Another example of this is if we embed $\cL$ in a larger set of modules, say $\cL'$. Then, for any $M\in\cL$, $D_{\cL'}(M)$ will be a subset of $D_\cL(M)$ since $D_{\cL'}(M)$ is given by $\theta(M)=0$ and $\theta(M')\ge0$ for all quotients $M'$ of $M$ in $\cL'$. So, there are more conditions giving a smaller set.
\end{rem}

In the stereographic projection of the intersection of these sets with the unit sphere $S^{n-1}$, $D(B),D(C)$ will be $n-2$ spheres and the minimal ghost $D(Z;B)$ will be the $n-2$ disk with boundary the $n-3$ sphere $\partial D(Z;B)=D(B)\cap D(C)$. See Figure \ref{fig: Case 0, minimal ghost} for an example which has three minimal subobject ghosts.

\begin{figure}[htbp]
\begin{center}
\begin{tikzpicture}
\begin{scope}
\draw[thick] (-.95,1)circle[radius=1.5cm];
\draw[thick] (.95,1)circle[radius=1.5cm];
\draw[thick] (0,-.7)circle[radius=1.5cm];
\draw(-2.7,2.2)node{$D(P_3)$};
\draw(2.7,2.2)node{$D(S_3)$};
\draw(0,-1.8)node{$D(I_2)$};
\draw[very thick,red!80!black] (-1.45,-.4)--(.5,.7);
\draw[red!80!black] (-1.5,-.6) node[left]{$Z_a$};
\draw[green!80!black] (0,-.3) node[below]{$Z_c$};
\draw[blue] (1.5,-.6) node[right]{$Z_b$};
\draw[very thick,blue] (1.45,-.4)--(-.5,.7);
\draw[very thick,green!90!black] (0,-.15)--(0,2.15);
\end{scope}
\begin{scope}[xshift=4.4cm,yshift=.5cm]
\draw[red!80!black] (0,0) node{$S_1$}; 
\draw[green!70!black] (0.5,0.6) node{$P_2$}; 
\draw (1.5,0.6) node{$I_2$};
\draw (1,1.2) node{$P_3$};
\draw[blue] (1,0) node{$S_2$}; 
\draw (2,0) node{$S_3$}; 
\draw[->] (0.13,0.15)--(.33,.4);
\draw[->] (1.13,0.15)--(1.33,0.4);
\draw[->] (0.63,0.75)--(.83,1);
\draw[->] (0.63,0.37)--(.83,.14);
\draw[->] (1.63,.37)--(1.83,.14);
\draw[->] (1.13,.97)--(1.33,.74);
\end{scope}
\begin{scope}[xshift=4cm,yshift=-.5cm]
\draw[red!80!black] (0,0)node[right]{$Z_a=\cG h(S_1,P_3)$};
\draw[blue] (0,-.6)node[right]{$Z_b=\cG h(S_2,I_2)$};
\draw[green!70!black] (0,-1.2)node[right]{$Z_c=\cG h(P_2,P_3)$};
\end{scope}
\end{tikzpicture}
\caption{Here the quiver is $1\ot 2\ot 3$. The AR quiver is indicated on the right. The objects $P_3,I_2,S_3$ are all minimal. So they give three circles in the picture. Therefore all three ghosts are minimal. For example, the domain of $\color{red!80!black}Z_a=\cG h(S_1,P_3)$ connects the intersection points of circles $D(P_3)$ and $D(I_2)$.}
\label{fig: Case 0, minimal ghost}
\end{center}
\end{figure}
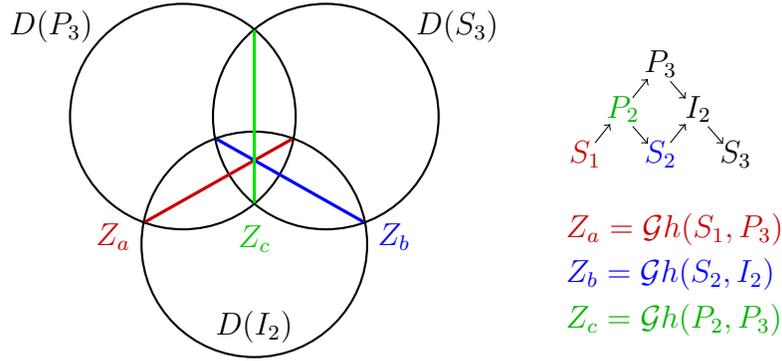

{%
\begin{figure}[htbp]
\begin{center}

\begin{tikzpicture}[scale=1.3] 
%
{
\begin{scope}
\begin{scope}
\clip rectangle (2,1.3) rectangle (0,-1.3);
\draw[thick] (0,0) ellipse [x radius=1.5cm,y radius=1.21cm];
\end{scope}
\begin{scope}[xshift=.87mm]
		\draw[thick] (1.2,.6) node[left]{\small$P_3$};
\end{scope}
\begin{scope}[xshift=-.7cm]
	\draw[thick] (0,0) circle[radius=1.4cm];
		\draw (-1,1.1) node[left]{\small$S_1$};
\end{scope}
\begin{scope}[xshift=.7cm]
	\draw[thick] (0,0) circle[radius=1.4cm];
		\draw (1,1.1) node[right]{\small$I_2$};
\end{scope}
\begin{scope}[yshift=-1.35cm]
	\draw[thick] (0,0) circle[radius=1.4cm];
	\draw (1.15,-.8) node[right]{\small$S_3$};
\end{scope}
%
\coordinate(B1) at (1.39,-1.22);
\coordinate(B2) at (-.69,-.14);
\coordinate(B) at (-.1,-.9);
\coordinate(A) at (1.24,-.69);
\coordinate(C) at (.48,-.75);
\coordinate(AC) at (.84,-.73);
\draw[red!80!black] (AC) node[above]{\tiny$Z_a$};
\draw[blue] (B) node[above]{\small$Z_b$};
\draw[thick,blue] (B1)--(B2);
\draw[thick,red!80!black] (A)--(C);
\end{scope} 
\begin{scope}[xshift=4cm,yshift=-.5cm]
\draw (0,0) node{$S_1$}; 
\draw(0,0)circle[radius=2.4mm];
\draw[red!80!black] (0.5,0.6) node{$P_2$}; 
\draw (1.5,0.6) node{$I_2$};
\draw(1.5,0.6)circle[radius=2.4mm];
\draw (1,1.2) node{$P_3$};
\draw(1,1.2)circle[radius=2.4mm];
\draw[blue] (1,0) node{$S_2$}; 
\draw (2,0) node{$S_3$}; 
\draw(2,0)circle[radius=2.4mm];
\draw[->] (0.13,0.15)--(.33,.4);
\draw[->] (1.13,0.15)--(1.33,0.4);
\draw[->] (0.63,0.75)--(.83,1);
\draw[->] (0.63,0.37)--(.83,.14);
\draw[->] (1.63,.37)--(1.83,.14);
\draw[->] (1.13,.97)--(1.33,.74);
\end{scope}

\begin{scope}[xshift=4cm,yshift=-1.3cm]
\draw[red!80!black] (0,0)node[right]{$Z_a=\cG h(P_2,P_3)$};
\draw[blue] (0,-.4)node[right]{$Z_b=\cG h(S_2,I_2)$};
\end{scope}
}
\end{tikzpicture}

\caption{This is Figure \ref{Figure00} with subobject ghosts $Z_a$ and $Z_b$ added. We also drop the $D(-)$ from the notation to save space. This example is Case (3) in Definition \ref{def: subobject ghost domains} and in Proposition \ref{prop: bifurcation list} and is further discussed in subsection \ref{sss: Case 3}.}
\label{Figure05}
\end{center}
\end{figure}
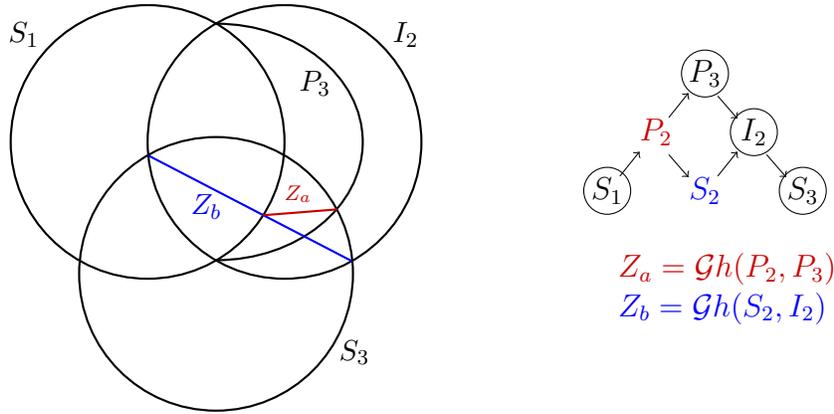
}

{
In Figure \ref{Figure05} we added ghosts to Figure \ref{Figure00}. In this example, $Z_b$ is the minimal ghost with endpoints the intersection points of the circles $D(S_3)$ and $D(I_2)$. This example also show a ``bifurcation''. When the domain $D(S_2;I_2)$ of the minimal ghost $Z_b=\cG h(S_2;I_2)$ crosses the ``splitting wall'' $D(X)=D(S_1)$, it produces another ghost $Z_a=\cG h(P_2,P_3)$.

In general, the domain of the new ghost $\cG h(Z';B')$ has an added condition coming from the splitting wall $D(X)$:
\[
	D(Z';B')=\{\theta\,|\, \theta(Z')=0, \theta(B')\ge0, \theta(X)\le0\}.
\]
Thus the boundary of a ghost domain which is not minimal has two parts: the \emph{standard part} $D(B')\cap D(C')$ and the splitting wall $D(X)$ intersected with another ghost domain.
}

We list all the possibilities and give examples. We also give a short explanation why this is the complete list in the case of a torsion class in a string algebra which includes the case of subobject ghosts in type $A_n$. The statement is that the boundary of the subobject ghost domain is a union of parts given by these boundary conditions. 

\begin{defn}\label{def: subobject ghost domains}
The domain $D(Z;B)$ of the subobject ghost of $Z$ as the kernel of $B\onto C$ is the set of all $\theta\in\RR^n$ so that $\theta(Z)=0$ and $\theta$ satisfies the following five inequalities.
\begin{enumerate}
\item[(0)] $\theta(B)\ge0$.
\item $\theta(Y)\ge0$ for all $Y$ admissible quotient of both $B$ and $C$. See Figure \ref{Figure01}. 
\item $\theta(X)\le 0$ for all $X\in\cG$ which is a submodule of $B$ disjoint from $Z$ (so that it maps to a submodule of $C$). See Figure  \ref{Figure03}. 
\item $\theta(X)\le0$ if $X\in\cG$ is a subobject of $Z$. This is the case shown in Figure \ref{Figure05}. 
\item $\theta(X)\le 0$ when $X\in\cG$ is a subobject of $C$ so that the epimorphism $B\to C/X$ is not admissible. See Figure \ref{Figure07}. 
\item $\theta(Y)\ge0$ if $f:B\to Y$ is an epimorphism with $Z+\ker f=B$ so that $\ker f$ maps onto $C$. See Figure \ref{Figure04}. 
\end{enumerate}
\end{defn}

\begin{rem}\label{rem: why list of bifurcations is complete}
    We claim that, in the case of a string algebra, this is the complete list of possible bifurcations. Since $A,B,C$ are indecomposable, $A$ and $C$ form the ends of the string for $B$, say $A$ is on the left and $C$ is the right. A bifurcation occurs when the pair $(B,C)$ crosses a wall that changes one or both of these. This happens when a common right tail of both $B,C$ is chopped off. The common tail could be a common quotient $Y$ or a common submodule $X$ and what is left is $(B',C')$. This is Cases (1) and (2). Recall that, under an extension, the smaller modules have walls that continue past the bifurcation point and the extension is a half-wall that originates at the crossing. So, the ghost for $(B,C)$ originates at this crossing and the walls for $X,Y$ and the other ghost extend through the crossing.

    When the longer ``leg'' $B$ is cut, we get $B'$ which is either a subobject or quotient object of $B$. This gives Cases (5) and (3). Finally, we can cut $C$ and get a smaller $C'$ which must be quotient of $C$. This is Case (4). There is one more possibility which is not listed: We could cut $B$ so that it becomes shorter than $C$ and we get an epimorphism $C\to B'$. This is a possible bifurcation which we will need to consider in the next paper. However, this final possible bifurcation is not listed since it cannot happen in the case of a torsion class.
\end{rem}

The analogue of Theorem \ref{thm: stable implies path crosses D(M)} for ghost walls will hold with the same proof.

\begin{defn}\label{def: stability of ghosts}
Let $\gamma_t$ be a generic smooth green path (Definition \ref{def: linear MGS}). A subobject ghost $\cG h(Z;B)$ is define to be \emph{$\gamma_t$ stable} if the following conditions hold.
\begin{enumerate}
\item[(0)] $t_C<t_B$.
\item $t_Y<t_Z$ whenever $Y$ is an admissible quotient of both $B$ and $C$.
\item $t_X>t_Z$ whenever $X\in\cG$ is a submodule of $B$ disjoint from $Z$.
\item $t_X>t_Z$ whenever $X\in \cG$ is a subobject of $Z$.
\item $t_X>t_Z$ whenever $X\in\cG$ is a subobject of $C$ so that the epimorphism $B\to C/X$ is not admissible.
\item $t_Y<t_Z$ if $f:B\to Y$ is an epimorphism whose kernel maps onto $C$.
\end{enumerate}
In cases (2),(3),(4) we say that $D(X)$ is a \emph{subobject splitting wall} for $\cG h(Z;B)$. In cases (1) and (5) we say that $D(Y)$ is a \emph{quotient objects splitting wall} for $\cG h(Z;B)$.
\end{defn}

\begin{thm}\label{thm: ghosts are stable iff path goes though domain}
A generic smooth green path $\gamma_t$ passes though the domain $D(Z;B)$ of a subobject ghost $\cG h(Z;B)$ if and only if this ghost is $\gamma_t$-stable.
\end{thm}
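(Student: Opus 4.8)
The plan is to mimic the proof of Theorem \ref{thm: stable implies path crosses D(M)}, but now tracking the five extra boundary conditions of Definition \ref{def: subobject ghost domains} against the five stability inequalities of Definition \ref{def: stability of ghosts}. The key observation is the dictionary: for a linear (or generic smooth) green path $\gamma_t$ and a brick $W$, the condition $\gamma_{t_*}(W)\ge 0$ at a fixed parameter $t_*$ is equivalent to $t_W\le t_*$, since $\gamma_t(W)$ is a strictly increasing function of $t$ (property (1) in Definition \ref{def: linear MGS}, which by clause (4) also applies to the ghost object $Z$). So if I set $t_* = t_Z$ — the unique time with $\gamma_{t_Z}(Z)=0$ — then $\gamma_{t_Z}\in D(Z;B)$ if and only if each of the defining inequalities of $D(Z;B)$ holds at $\theta = \gamma_{t_Z}$, and each such inequality translates directly into a comparison of $t_Z$ with $t_B$, $t_C$, $t_X$ or $t_Y$.

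Concretely, I would proceed condition by condition. Condition (0) of the domain, $\theta(B)\ge 0$ at $\theta=\gamma_{t_Z}$, says $t_B\ge t_Z$; but since $A\to B\to C$ is exact, $\gamma_{t}(B)=\gamma_t(A)+\gamma_t(C)=\gamma_t(Z)+\gamma_t(C)$, so at $t=t_Z$ we get $\gamma_{t_Z}(B)=\gamma_{t_Z}(C)$, and $\gamma_{t_Z}(B)\ge 0 \iff \gamma_{t_Z}(C)\ge 0 \iff t_C\le t_Z < t_B$ (the strict inequality on the $B$ side coming from genericity, $t_B\neq t_Z$). This recovers condition (0) of Definition \ref{def: stability of ghosts}, $t_C<t_B$, together with $t_C < t_Z$, which is the ``$\ge 0$'' half; the ``$>0$'' strictness needed for the \emph{interior} statement follows from genericity exactly as in Theorem \ref{thm: stable implies path crosses D(M)}. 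For conditions (1) and (5): $Y$ is an admissible quotient of $B$ (resp. an epimorphic image of $B$ whose kernel maps onto $C$), and the domain requires $\theta(Y)\ge 0$, i.e. $t_Y\le t_Z$; genericity again upgrades this to $t_Y<t_Z$, matching clauses (1) and (5). For conditions (2), (3), (4): here $X\in\cG$ is a submodule of $B$ (in the relevant configuration) and the domain requires $\theta(X)\le 0$ at $\gamma_{t_Z}$, i.e. $t_X\ge t_Z$, which genericity sharpens to $t_X>t_Z$, matching clauses (2), (3), (4). Running the equivalences in both directions gives the ``if and only if''.

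The one subtlety — and the step I expect to require the most care — is the same one flagged in the sentence preceding the theorem: the \emph{interior} versus \emph{closure} issue, i.e. making sure that a $\gamma_t$-stable ghost gives a path through $int\,D(Z;B)$ and not merely through its boundary, and conversely that a path grazing the boundary does \emph{not} count as stable. This is handled by the genericity hypothesis on $\gamma_t$ (it crosses each hyperplane $H(W)$ transversally and at distinct times), which forces every relevant comparison $t_W$ versus $t_Z$ to be strict; in the forward direction this shows $\gamma_{t_Z}$ satisfies all the defining inequalities strictly, hence lies in the interior. Beyond that, the argument is a routine translation, so as the paper says, ``The analogue of Theorem \ref{thm: stable implies path crosses D(M)} for ghost walls will hold with the same proof.'' The only thing to double-check is that \emph{every} defining inequality of $D(Z;B)$ in Definition \ref{def: subobject ghost domains} is in bijective correspondence with \emph{exactly one} clause of Definition \ref{def: stability of ghosts} and no condition is silently dropped — the numbering $(0)$–$(5)$ is set up precisely so that this bookkeeping is transparent.
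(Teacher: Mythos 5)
Your proposal follows essentially the same route as the paper's proof: translate each inequality defining $D(Z;B)$, evaluated at $\theta=\gamma_{t_Z}$, into a comparison of crossing times via monotonicity of $\gamma_t(W)$ (conditions (1)--(5) match clause by clause), and handle condition (0) through the additivity $\gamma_t(B)=\gamma_t(Z)+\gamma_t(C)$. One intermediate inequality has the wrong sign: from $\gamma_{t_Z}(B)=\gamma_{t_Z}(C)\ge 0$ you should conclude $t_C\le t_Z$ \emph{and} $t_B\le t_Z$ (the path has already crossed $H(B)$ by time $t_Z$), and since additivity forces $t_B$ to lie between $t_C$ and $t_Z$, the generic ordering is $t_C<t_B<t_Z$, not $t_C\le t_Z<t_B$ as you wrote; this is exactly the dichotomy ($t_Z<t_B<t_C$ or $t_C<t_B<t_Z$) invoked in the paper. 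The desired conclusion $t_C<t_B$ still follows, so the slip does not affect the validity of the equivalence once corrected.
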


\begin{proof} Since $\gamma_t(X)$ is monotonically increasing we have that $t_Y<t_Z$ if and only if $\gamma_{t_Z}(Y)>0$. Similarly $\gamma_{t_Z}(X)<0$ if and only if $t_X>t_Z$. Thus $\gamma_t$ satisfies conditions (1)-(5) of Definition \ref{def: stability of ghosts} if and only if $\gamma_{t_Z}$ satisfies (1)-(5) of Definition \ref{def: subobject ghost domains}. For condition (0) we note that since $B$ is an extension of $Z$ by $C$, either $t_Z<t_B<t_C$ or $t_C<t_B<t_Z$. The second condition is equivalent to $\gamma_{t_Z}(B)>0$. Thus all conditions are equivalent and $\cG h(Z;B)$ is \emph{$\gamma_t$ stable} if and only if $\gamma_{t_Z}\in D(Z;B)$.
\end{proof}

\begin{cor}\label{cor: stability of minimal ghosts}
A minimal ghost $\cG h(Z;B)$ is $\gamma_t$ stable if and only if $t_C<t_B$. Equivalently, $\gamma_{t_B}\in D_\cL(B)$ where $\cL=add\,(\cG\cup Z)$.\qed
\end{cor}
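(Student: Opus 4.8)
The plan is to derive this from Theorem \ref{thm: ghosts are stable iff path goes though domain}, which already equates $\gamma_t$-stability of $\cG h(Z;B)$ with the condition $\gamma_{t_Z}\in D(Z;B)$; so the only work is to identify $D(Z;B)$ for a minimal ghost and then to convert the resulting inequality into the $t$-language.

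For a minimal ghost $D(Z;B)$ is the half-hyperplane $\{\theta:\theta(Z)=0,\ \theta(B)\ge0\}$. (This is the defining description for minimal ghosts, and it agrees with the general Definition \ref{def: subobject ghost domains}: since $\cG$ is a torsion class it is closed under quotients, so a nonzero proper $X\subseteq B$ or $X\subseteq C$ lying in $\cG$ would give a nontrivial admissible quotient $B\onto B/X$ or $C\onto C/X$, which minimality forbids; hence $B$, $C$ have no nontrivial admissible quotient and no nonzero proper submodule in $\cG$, and neither does $Z\subseteq B$, so the extra conditions (1)--(5) of Definition \ref{def: subobject ghost domains} are empty or reduce to $\theta(Z)=0$.) So $\cG h(Z;B)$ is $\gamma_t$-stable iff $\gamma_{t_Z}(B)\ge0$. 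Since $B$ is an extension of $Z$ by $C$ and, by Definition \ref{def: linear MGS}, $\gamma_t$ is strictly increasing on each of $Z,B,C$, the equality $\gamma_{t_B}(B)=\gamma_{t_B}(Z)+\gamma_{t_B}(C)=0$ forces $t_B$ to lie strictly between $t_Z$ and $t_C$ (genericity rules out ties). Hence $\gamma_{t_Z}(B)\ge0$, i.e. $t_B\le t_Z$, holds exactly when $t_C<t_B<t_Z$, i.e. exactly when $t_C<t_B$. This is the first equivalence.

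For the reformulation with $\cL=add(\cG\cup Z)$: because $Z\in\cL$, the epimorphism $B\onto C$ has admissible kernel $Z$ and cokernel $0$, so $C$ is an admissible quotient of $B$ in $\cL$; conversely an admissible quotient $B\onto B/K$ in $\cL$ has $K\subseteq B$ with $K\in add(\cG\cup Z)$, whose $\cG$-summands would be nonzero proper submodules of $B$ in $\cG$ and hence vanish, so $K$ is a sum of copies of $Z$, and the hypothesis $\Hom(Z,C)=0$ applied to $Z\to B\to C$ yields $\Hom(Z,B)\cong\Hom(Z,Z)$, which is one-dimensional, so $K\in\{0,Z\}$. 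Thus $D_\cL(B)=\{\theta:\theta(B)=0,\ \theta(C)\ge0\}$, and since $\gamma_{t_B}(B)=0$ and $\gamma_t(C)$ is strictly increasing, $\gamma_{t_B}\in D_\cL(B)$ iff $\gamma_{t_B}(C)\ge0$ iff $t_C\le t_B$, i.e. (genericity) $t_C<t_B$, matching the first equivalence. Nothing here is a real obstacle --- it is a corollary of Theorem \ref{thm: ghosts are stable iff path goes though domain} --- but the point to set down carefully is the two-fold use of the fact that a torsion class is closed under quotients: once to collapse Definition \ref{def: subobject ghost domains} in the minimal case, and once (together with $\Hom(Z,C)=0$) to single out $C$ as the unique nontrivial admissible quotient of $B$ in $add(\cG\cup Z)$.
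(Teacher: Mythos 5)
Your proposal is correct and is essentially the paper's own (unwritten) justification: minimality empties the extra conditions in Definitions \ref{def: subobject ghost domains}/\ref{def: stability of ghosts}, so via Theorem \ref{thm: ghosts are stable iff path goes though domain} stability reduces to condition (0), i.e.\ $t_C<t_B$, and adjoining $Z$ to the class makes $C$ the unique nontrivial admissible quotient of $B$, giving $D_\cL(B)=\{\theta:\theta(B)=0,\ \theta(C)\ge0\}$. One small point to tighten: $D_\cL(B)$ is defined via \emph{weakly} admissible quotients, whose kernels need only lie in $\cF ilt(\cL)$ (and $add(\cG\cup Z)$ need not be extension closed), but your own computation closes this too --- the bottom subquotient of a filtration of such a kernel is a submodule of $B$, hence is excluded from $\cG$ by minimality or equals $Z$ since $\Hom(Z,B)\cong\End(Z)$, and then $\Hom(Z,C)=0$ together with minimality of $C$ forces the kernel to be $0$ or $Z$.
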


\begin{cor}\label{cor: stability of nonminimal ghosts}
A subobject ghost $\cG h(Z;B)$ is $\gamma_t$ stable if and only if $t_C<t_B$, $t_Y<t_Z$ for all quotient object splitting walls $D(Y)$ and $t_X>t_Z$ for all subobject splitting walls $D(X)$.\qed
\end{cor}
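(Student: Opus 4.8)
The plan is to obtain this corollary as an immediate repackaging of Theorem~\ref{thm: ghosts are stable iff path goes though domain} using the terminology fixed at the end of Definition~\ref{def: stability of ghosts}. That theorem already asserts that $\cG h(Z;B)$ is $\gamma_t$-stable if and only if all of conditions (0)--(5) of Definition~\ref{def: stability of ghosts} hold, so the only task is to check that those six conditions amount to exactly the three conditions listed in the corollary.

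First I would recall the naming convention introduced in Definition~\ref{def: stability of ghosts}: a wall $D(X)$ is a \emph{subobject splitting wall} for $\cG h(Z;B)$ precisely when it occurs in case (2), (3), or (4) --- that is, when $X$ is a submodule of $B$ disjoint from $Z$, a subobject of $Z$, or a subobject of $C$ with $B\to C/X$ inadmissible --- and in each of these three cases the corresponding inequality is literally $t_X>t_Z$. Dually, $D(Y)$ is a \emph{quotient object splitting wall} exactly when it arises in case (1) or case (5), and in both the associated inequality is $t_Y<t_Z$. Hence conditions (1)--(5) taken together are equivalent to the single assertion that $t_X>t_Z$ for every subobject splitting wall $D(X)$ and $t_Y<t_Z$ for every quotient object splitting wall $D(Y)$. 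Since condition (0) already reads $t_C<t_B$ verbatim, the conjunction of (0)--(5) is precisely the conjunction of the three conditions in the corollary, and an appeal to Theorem~\ref{thm: ghosts are stable iff path goes though domain} completes the argument.

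I do not anticipate any genuine obstacle: this is a bookkeeping statement rather than a new argument, which is why it can be recorded with the verification left to the reader. The one point worth a sentence of care is completeness --- one should observe that \emph{every} splitting wall of $\cG h(Z;B)$ is of one of the five listed types, with its inequality stated in a single unambiguous direction --- but this is immediate, since the term ``splitting wall'' was introduced only through cases (1)--(5) of Definition~\ref{def: stability of ghosts}.
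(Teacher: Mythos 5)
Your proposal is correct and is exactly the argument the paper intends: the corollary is recorded with \qed precisely because it is the direct translation of Theorem \ref{thm: ghosts are stable iff path goes though domain} through the splitting-wall terminology of Definition \ref{def: stability of ghosts}, with cases (2),(3),(4) giving the subobject splitting walls ($t_X>t_Z$), cases (1),(5) the quotient object splitting walls ($t_Y<t_Z$), and (0) the condition $t_C<t_B$. Nothing further is needed.
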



{
It is clear that, in general, $D(Z;B)$ is a closed convex subset of the hyperplane $H(Z)$ whose boundary has six parts given by the conditions in Definition \ref{def: stability of ghosts}. The terminology in the following proposition is explained in Figure \ref{fig: general bifurcation}.
\begin{figure}[htbp]
\begin{center}
\begin{tikzpicture}[scale=1.5]
\begin{scope}
\draw[very thick,blue] (0,-.7)--(0,1);
\draw[blue] (0,.9) node[right]{$Z_b$};
\draw[very thick, red!80!black] (0,0)--(.8,.8);
\draw[red!80!black] (.8,.8) node[right]{$Z_a$};
\draw[thick] (-1,-.3)..controls (-.7,.1) and (.7,0.1)..(1,-.3); 
\draw (-1,0.05) node{$D(X)$};
\end{scope}
\begin{scope}[xshift=3.5cm]
\draw[very thick,blue] (0,-.7)--(0,1);
\draw[blue] (0,.9) node[right]{$Z_b$};
\draw[very thick, red!80!black] (0,0)--(.8,.8);
\draw[red!80!black] (.8,.8) node[right]{$Z_a$};
\draw[thick] (-1,.3)..controls (-.7,-.1) and (.7,-0.1)..(1,.3); 
\draw (-1,-.05) node{$D(Y)$};
\end{scope}
\end{tikzpicture}
\caption{When the ghost $Z_b$ hit the splitting wall $D(X)$ or $D(Y)$, it splits off a new ghost $Z_a$ on the negative side of the subobject splitting wall $D(X)$ or the positive side of the quotient object splitting wall $D(Y)$. The subobject ghost $Z_b$ is smaller than $Z_a$ even though its domain is larger. For example, $Z_b$ could be a minimal ghost, but $Z_a$ is never minimal.}
\label{fig: general bifurcation}
\end{center}
\end{figure}
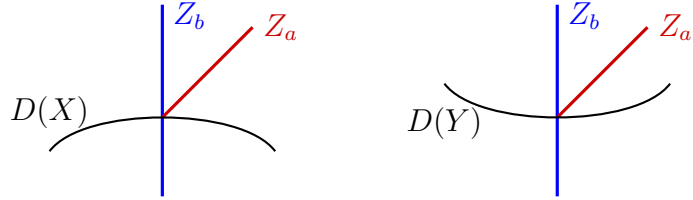
}

{
\begin{prop}\label{prop: bifurcation list}
The subobject ghosts $Z_a$ in Cases (1)-(5) of Definition \ref{def: stability of ghosts} are split off from smaller subobject ghosts $Z_b$ as follows.
\begin{enumerate}
\item When $Y$ is an admissible quotient of both $B$ and $C$, $Z_a$ splits off from $Z_b=\cG h(Z';B')$ where $Z'=Z$ and $B',C'$ are the kernels of the epimorphisms $B\onto Y$ and $C\onto Y$. The splitting wall is $D(Y)$.
\item When $X\in\cG$ is a submodule of $B$ disjoint from $Z$ so that it maps to a submodule of $C$, $Z_a$ splits off from $Z_b=\cG h(Z';B')$ where $Z'=Z$, $B'=B/X$ and $C'=C/X$. The splitting wall is $D(X)$.
\item If $X\in\cG$ is a subobject of $Z$, $Z_a$ is split off from $Z_b=\cG h(Z';B')$ where $Z'=Z/X$, $B'=B/X$ and $C'=C$. The splitting wall is $D(X)$. The exact sequence $X\to Z\to Z'$ shows that $Z'\notin\cG$ since otherwise $Z\in\cG$ contrary to assumptions.
\item When $X\in\cG$ is a subobject of $C$ so that the epimorphism $B\onto C/X$ is not admissible, then $Z_a$ is split off of $Z_b=\cG h(Z';B')$ where $Z'=\ker(B\onto C/X)$ ($Z'\notin\cG$ by assumption), $B'=B$ and $C'=C/X$. The splitting wall is $D(X)$. 
\item If $f:B\to Y$ is an epimorphism with $Z+\ker f=B$ (so that $\ker f$ maps onto $C$), then $Z_a$ has split off $Z_b=\cG h(Z';B')$ where $B'=\ker f$, $C'=C$ and $Z'=\ker(Z\to Y)$. (As in (3), the exact sequence $Z'\to Z\to Y$ shows that $Z'\notin\cG$.) The splitting wall is $D(Y)$.
\end{enumerate}
\end{prop}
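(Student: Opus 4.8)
The plan is to treat the five cases by a single two–step scheme, moving freely between the domain $D(Z;B)\subseteq H(Z)$ and the stability conditions of Definition \ref{def: stability of ghosts} via Theorem \ref{thm: ghosts are stable iff path goes though domain}. Fix a case $(i)$, write $Z_a=\cG h(Z;B)$ for the ghost under discussion and $Z_b=\cG h(Z';B')$ for the candidate ``parent''. The two things to establish are: \emph{(a)} that the triple $(Z',B',C')$ named in the statement really is subobject–ghost data, so that $\cG h(Z';B')$ and its domain $D(Z';B')\subseteq H(Z')$ are defined; and \emph{(b)} that the boundary piece of $D(Z;B)$ cut out by inequality $(i)$ of Definition \ref{def: subobject ghost domains} is exactly the face along which $D(Z';B')$ is glued to $D(Z;B)$, with $D(Z;B)$ lying on the side $\theta(X)\le 0$ in Cases (2)--(4) and on the side $\theta(Y)\ge 0$ in Cases (1),(5), while $D(Z';B')$ continues across that splitting wall. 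Equivalently, by Theorem \ref{thm: ghosts are stable iff path goes though domain}, one checks that the stability conditions of Definition \ref{def: stability of ghosts} for $\cG h(Z;B)$, with the $(i)$-th splitting–wall condition relaxed to the equality $t_X=t_Z$ (resp.\ $t_Y=t_Z$), coincide with the stability conditions for $\cG h(Z';B')$; this is what ``$Z_a$ splits off from $Z_b$ with splitting wall $D(X)$ (resp.\ $D(Y)$)'' means.

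For step (a) I would, in each case, first check that $Z'\to B'\to C'$ is short exact: this is a diagram chase using the preimage/image constructions already named (preimages of $Y$ in Cases (1),(5), division by $X$ in Cases (2),(3), and $Z'=\ker(B\onto C/X)$ in Case (4)). Next, $B',C'\in\cG$ because $\cG$ is a torsion class: $B/X$, $C/X$, $C/X$ are quotients in Cases (2),(3),(4), while $\ker(B\onto Y)$, $\ker(C\onto Y)$, $\ker f$ lie in $\cG$ in Cases (1),(5) precisely because the epimorphisms in question are \emph{admissible}. Then $Z'\notin\cG$: automatic in Cases (1),(2) where $Z'=Z$; in Cases (3),(5) it follows from the exact sequences $X\to Z\to Z'$ and $Z'\to Z\to Y$ together with closure of $\cG$ under extensions (as the statement records); in Case (4) it is literally the meaning of ``$B\onto C/X$ not admissible'', since the image $C/X\in\cG$ and the cokernel is $0$. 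The remaining point $\Hom(Z',C')=0$ is formal exactly when $C'$ is a subobject of $C$ and $Z'$ a quotient of $Z$ (Cases (1),(3)), where it follows by restricting or precomposing the given vanishing $\Hom(Z,C)=0$; in Cases (2),(4),(5), where either $C'$ is a proper quotient of $C$ or $Z'\neq Z$, one must rule out a surviving nonzero map, and here I would invoke the rigidity available under the string–algebra hypothesis of Remark \ref{rem: why list of bifurcations is complete}.

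For step (b) the geometric heart is a dimension–vector identity making $H(Z)$ and $H(Z')$ coincide along the splitting wall: $\undim Z'=\undim Z$ in Cases (1),(2), and $\undim Z'$ differs from $\undim Z$ by $\pm\undim X$ in Cases (3),(4) and by $\undim Y$ in Case (5), so that $\theta(Z')=\theta(Z)$ whenever $\theta(X)=0$ (resp.\ $\theta(Y)=0$); the analogous identities give $\theta(B')=\theta(B)$ and $\theta(C')=\theta(C)$ on the wall, so inequality (0) matches there. It then remains to match, on the wall, the remaining inequalities of Definition \ref{def: subobject ghost domains} for $(Z,B,C)$ with those for $(Z',B',C')$. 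I would do this by exhibiting, in each case, a $\theta$-value–preserving correspondence between the relevant sub- and quotient modules of the two triples: a common admissible quotient of $B'$ and $C'$ pulls back to a common admissible quotient of $B$ and $C$ and conversely (using that $X$ lies in the kernel of the relevant map, and that composites of admissible epimorphisms are admissible); a submodule of $B'$ disjoint from $Z'$ corresponds, via preimage, to a submodule of $B$ meeting $Z$ exactly in $X$, and hence, after adding $Z$, to a submodule of $C$; and similarly for the subobjects of $Z$, the subobjects of $C$ with non-admissible collapse, and the condition–(5) epimorphisms. Since the defining system of $D(Z';B')$ does not (generically) contain the sign of $\theta(X)$ (resp.\ $\theta(Y)$) among its inequalities, $D(Z';B')$ extends past the wall to $\theta(X)>0$ (resp.\ $\theta(Y)<0$), which is the splitting–off assertion.

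The main obstacle is the bookkeeping in step (b): making the sub/quotient correspondences genuinely bijective forces one to control precisely when a composite epimorphism is admissible and when the preimages and images produced land in $\cG$, and in one or two sub-cases — for instance a submodule $W'\subseteq B/X$ disjoint from $Z/X$ in Case (3) whose preimage together with $Z$ is a submodule of $B$ lying in $\cG$ — one has to rule out configurations that the naive correspondence would miss. Under the string–algebra hypothesis the interval structure of the indecomposables pins down the only possible bifurcations to be exactly those of Remark \ref{rem: why list of bifurcations is complete}, and the correspondences then become forced; the $\Hom(Z',C')=0$ verifications in Cases (2),(4),(5) are the other place where the argument uses more than the torsion–class axioms.
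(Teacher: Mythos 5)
The paper itself contains no proof of Proposition \ref{prop: bifurcation list}: the statement is followed only by the remark that $Z',B',C'$ are \emph{assumed} indecomposable, its completeness is argued heuristically for string algebras in Remark \ref{rem: why list of bifurcations is complete}, and its accuracy is claimed only for torsion classes of type $A_n$ and then checked case by case in the examples of subsection \ref{ss: examples}. So your proposal is not an alternative to the paper's argument so much as the only actual proof attempt on the table, and its outline is sound and consistent with what the figures and examples are meant to convey: step (a), verifying that $(Z',B',C')$ is genuine ghost data (exactness, $B',C'\in\cG$ from the torsion-class axioms, $Z'\notin\cG$ by the same extension arguments the statement itself records parenthetically in Cases (3)--(5)), and step (b), matching the defining inequalities of $D(Z;B)$ and $D(Z';B')$ along the splitting wall using the identities $\undim Z-\undim Z'=0$, $\pm\undim X$, $\undim Y$ and a correspondence of admissible sub- and quotient modules, after which $D(Z';B')$ continues across the wall while $D(Z;B)$ stops there. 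What you should be aware of is that the points you defer are exactly the points the paper is silent on: the brick condition and $\Hom(Z',C')=0$ for the new triple in Cases (2), (4), (5), the possibility that the defining system of $D(Z';B')$ accidentally involves $\theta(X)$ (which would prevent $Z_b$ from genuinely crossing the wall), and the bijectivity of your sub/quotient correspondences are all pushed onto the string-algebra/type $A_n$ hypothesis. That matches the scope the paper explicitly claims for the proposition, but it means your argument, like the paper's assertion, is a complete proof only in that restricted setting rather than for arbitrary torsion classes.
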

{
In Proposition \ref{prop: bifurcation list} we are assuming that $B',C',Z'$ as described in this list are indecomposable. This holds in all the examples and we claim that this list is complete and accurate for subobject ghosts in torsion classes for type $A_n$ path algebras. An, in fact, only cases (1),(3),(4) occur for linearly oriented $A_n$.
}
}

Figure \ref{Figure05} shows an example of Case (3). Here, $X=S_1\in\cG$ is a subobject of $Z=P_2$ in the ghost $Z_a=\cG h(P_2;P_3)$. In the example, $B=P_3$. So, $B'=B/X=I_2$, $Z'=Z/X=S_2$ and $C'=C=S_3$ making $Z_b=\cG h(S_2; I_2)$. Examples of the other four cases are shown in subsection \ref{ss: examples} below. 

But first we describe the dual case of quotient object ghosts in torsion free classes.

{
\subsection{Quotient object ghosts}\label{ss: quotient object ghosts}

Given a class of modules $\cL$ as in Section \ref{sec1: definitions} and a brick $Z^\ast$ not in $\cL$, a \emph{quotient object ghost} of $Z^\ast$ is an isomorphism class of short exact sequences
\[
	A\to B\to Z^\ast
\]
where $A,B$ are bricks in $\cL$ and $\Hom(A,Z^\ast)=0$. In Figure \ref{fig: Kroneker ghosts}, the short exact sequence is $S_1\to M\to S_2$ where $S_1,M\in\cL$ but $S_2\notin\cL$. This sequence is a quotient ghost of $S_2$.

In a torsion-free class $\cF$, it is easy to see that a morphism $A\to B$ is {admissible} if its cokernel lies in $\cF$ since its kernel and image automatically lie in $\cF$. It is convenient to use admissible subobjects to describe the support of any object: 
\[
	D(B)=\{\theta\in\RR^n\,|\, \theta(B)=0, \theta(A)\le0\text{ for all admissible subobjects }A\subset B\}.
\]
We define a brick $B\in\cF$ to be \emph{minimal} if it has no admissible subobjects. $D(B)=H(B)$ in that case. We say that a quotient object ghost $A\to B\to Z^\ast$ is \emph{minimal} if both $A$ and $B$ are minimal objects. The domain of such a minimal quotient object ghost is
\[
	D^\ast(Z^\ast;B):=\{\theta\in \RR^n\,|\,\theta(Z^\ast)=0\text{ and }\theta(B)\le0\}.
\]
As in the case of subobject ghosts, a generic smooth green path will cross a quotient object domain $D^\ast(Z^\ast;B)$ (necessarily in its interior) if and only if the quotient object ghost is stable. Instead of defining the concept of stability and proving this statement, we will simply use this statement as the definition of when quotient object ghosts are stable. With this simplifying convention we have:

\begin{thm}\label{thm: when minimal quotient ghosts are stable}
Given a generic smooth green path $\gamma_t$, a minimal quotient ghost $A\to B\to Z^\ast$ will be stable if and only if $t_B<t_A$ in which case we will necessarily have $t_{Z^\ast}<t_B<t_A$. \qed
\end{thm}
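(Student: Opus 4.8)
The plan is to deduce this from the dual of Theorem \ref{thm: stable implies path crosses D(M)}, exactly as the surrounding text advertises. Since $\cF$ is a torsion-free class, the dual notion of $D(B)$ is given by admissible subobjects rather than admissible quotients, and the entire wall-and-chamber machinery of Section \ref{sec1: definitions} applies verbatim after replacing ``quotient'' by ``subobject'' and reversing all inequalities. So first I would note that, by the stated convention, stability of the quotient ghost $A\to B\to Z^\ast$ means precisely that the generic smooth green path $\gamma_t$ passes through $D^\ast(Z^\ast;B)$, and in the minimal case $D^\ast(Z^\ast;B)=\{\theta\,|\,\theta(Z^\ast)=0,\ \theta(B)\le0\}$.

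Next I would translate the condition $\gamma_{t_{Z^\ast}}\in D^\ast(Z^\ast;B)$ into statements about the crossing times. We always have $\gamma_{t_{Z^\ast}}(Z^\ast)=0$ by definition of $t_{Z^\ast}$, so the only real condition is $\gamma_{t_{Z^\ast}}(B)\le0$. Since $B$ is an extension of $A$ by $Z^\ast$ (i.e.\ $A\hookrightarrow B\twoheadrightarrow Z^\ast$), additivity of $\gamma_t$ on dimension vectors gives $\gamma_t(B)=\gamma_t(A)+\gamma_t(Z^\ast)$ for all $t$. Because each $\gamma_t(-)$ is strictly increasing in $t$ (Definition \ref{def: linear MGS}(1), together with condition (4) applied to the ghost $Z^\ast$), the three quantities $\gamma_t(A),\gamma_t(Z^\ast),\gamma_t(B)$ vanish at the distinct times $t_A,t_{Z^\ast},t_B$ respectively. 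Evaluating the additivity relation at $t=t_{Z^\ast}$ gives $\gamma_{t_{Z^\ast}}(B)=\gamma_{t_{Z^\ast}}(A)$, so $\gamma_{t_{Z^\ast}}(B)\le0$ iff $\gamma_{t_{Z^\ast}}(A)\le0$ iff $t_A\ge t_{Z^\ast}$, and genericity upgrades this to the strict inequality $t_{Z^\ast}<t_A$. This already shows that minimality of $B$ forces the equivalence ``stable $\iff$ $t_{Z^\ast}<t_A$.''

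To finish I would rephrase this in terms of $t_B$ as the statement claims. From $\gamma_t(B)=\gamma_t(A)+\gamma_t(Z^\ast)$ and strict monotonicity, the sign of $\gamma_t(B)$ at any time lies between the signs of the other two summands; concretely, since $t_A$ and $t_{Z^\ast}$ are the two zeros, $t_B$ must lie strictly between them, so either $t_A<t_B<t_{Z^\ast}$ or $t_{Z^\ast}<t_B<t_A$. The first case is exactly $\gamma_{t_{Z^\ast}}(B)>0$, i.e.\ the ghost is \emph{not} stable; the second case is $\gamma_{t_{Z^\ast}}(B)<0$, i.e.\ stable, and there we read off $t_{Z^\ast}<t_B<t_A$. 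Hence stability is equivalent to $t_B<t_A$, and in that case $t_{Z^\ast}<t_B<t_A$ automatically, which is the assertion.

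The only genuinely delicate point is making sure that the ``betweenness'' of $t_B$ is legitimate, i.e.\ that $\gamma_t(B)$ really does have a unique zero sandwiched between $t_A$ and $t_{Z^\ast}$ rather than, say, being identically signed; this is where one uses that $\gamma_t(A)$ and $\gamma_t(Z^\ast)$ are \emph{both} strictly increasing — the invocation of condition (4) of Definition \ref{def: linear MGS} for the ghost $Z^\ast$ — so that their sum is strictly increasing with a single zero, and that $t_B\neq t_A,t_{Z^\ast}$ by genericity. Everything else is a routine unwinding of the definitions, dualized from the subobject-ghost discussion, so I expect no serious obstacle beyond bookkeeping the direction of the inequalities correctly.
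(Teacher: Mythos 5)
Your proposal is correct and follows essentially the route the paper intends: the paper takes ``stable $\iff$ the generic path crosses $D^\ast(Z^\ast;B)$'' as the definition and leaves the theorem as immediate, the point being exactly your unwinding via monotonicity of $\gamma_t(-)$ and the sandwiching $t_B$ strictly between $t_A$ and $t_{Z^\ast}$ coming from $\gamma_t(B)=\gamma_t(A)+\gamma_t(Z^\ast)$, which is the same argument used for condition (0) in the proof of Theorem \ref{thm: ghosts are stable iff path goes though domain}. Your explicit appeal to Definition \ref{def: linear MGS}(4) for $Z^\ast$ and to genericity to get strict inequalities is exactly the right bookkeeping.
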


\begin{defn}\label{def: domain of quotient ghost}
The domain interior $int\,D^\ast(Z^\ast;B)$ of a quotient ghost $Z_a^\ast=(A\to B\to Z^\ast)$ for a torsion-free class $\cF$ is the subset of the hyperplane $H(Z^\ast)$ given by the following conditions.
\begin{enumerate}
\item[(0)] $\theta(B)<0$.
\item $\theta(X)<0$ if $X\in \cF$ is an admissible subobject of both $A$ and $B$.
\item $\theta(Y)>0$ if $Y\in\cF$ is a quotient of both $A$ and $B$.
\item $\theta(Y)>0$ if $Y\in\cF$ is a quotient of $Z^\ast$.
\item $\theta(Y)>0$ if $Y\in \cF$ is a quotient of $A$ with indecomposable kernel $A'$ so that $B/A'$ is not in $\cF$.
\item $\theta(X)<0$ if $X$ is an admissible subobject of $B$ disjoint from $A$.
\end{enumerate}
In each case, $X$ will be called a \emph{splitting subobject} and $Y$ will be called a \emph{splitting quotient object}.
\end{defn}

\begin{rem}\label{rem: duality functor}
Applying the duality functor $D=\Hom(-,K)$ takes torsion classes to torsion-free class and converts subobject ghosts $Z\to B\to C$ into quotient object ghosts $DC\to DB\to DZ$. Definition \ref{def: domain of quotient ghost} is the dual of Definition \ref{def: subobject ghost domains}.
\end{rem}

The following theorem is the dual of Corollary \ref{cor: stability of nonminimal ghosts}.
\begin{thm}\label{thm: stability of quotient ghosts}
Let $\gamma_t$ be a generic smooth green path. Then the quotient ghost $A\to B\to Z^\ast$ is $\gamma_t$-stable if and only if $t_B<t_A$ and $t_Y<t_{Z^\ast}<t_X$ for all splitting subobjects $X$ and splitting quotient objects $Y$ listed in Definition \ref{def: domain of quotient ghost}.\qed
\end{thm}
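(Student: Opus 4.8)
The statement is exactly the dual of Corollary \ref{cor: stability of nonminimal ghosts}, so the plan is to deduce it from that corollary via the duality functor of Remark \ref{rem: duality functor}, keeping the direct translation into stability conditions on hand as a check.

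\emph{Main route: duality.} Applying $D=\Hom(-,K)$ turns the torsion-free class $\cF$ into a torsion class $D\cF$, sends the quotient ghost $A\to B\to Z^\ast$ to the subobject ghost $DZ^\ast\to DB\to DA$ (with $DZ^\ast\notin D\cF$ playing the role of the missing submodule), and --- because $D$ is exact and contravariant, hence interchanges subobjects and quotient objects --- carries the six boundary conditions of Definition \ref{def: domain of quotient ghost} bijectively onto the six of Definition \ref{def: subobject ghost domains}, splitting subobjects going to subobject splitting walls and splitting quotient objects to quotient-object splitting walls. Corollary \ref{cor: stability of nonminimal ghosts} then gives the characterization for $DZ^\ast\to DB\to DA$, and it remains only to transport it back. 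The one point needing care is that $D$ reverses filtrations, so a generic green path $\gamma_t$ for $\cF$ is matched not with itself but with $-\gamma_{-t}$ for $D\cF$; the latter is again a generic green path, since it is strictly increasing on every brick $DM$ (so condition (1) of Definition \ref{def: linear MGS} is inherited) and likewise for the ghost by condition (4) there. This substitution reverses every comparison of the times $t_{(-)}$: ``$t_C<t_B$'' for the subobject ghost becomes ``$t_B<t_A$'' here, each quotient-object splitting inequality ``$t_Y<t_Z$'' becomes ``$t_Y<t_{Z^\ast}$'', and each subobject splitting inequality ``$t_X>t_Z$'' becomes ``$t_{Z^\ast}<t_X$'', which is precisely the asserted characterization.

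\emph{Alternative: direct argument.} One can also argue exactly as in the proof of Theorem \ref{thm: ghosts are stable iff path goes though domain}. By the convention fixed just before Theorem \ref{thm: when minimal quotient ghosts are stable}, ``$\gamma_t$-stable'' means $\gamma_t$ meets $D^\ast(Z^\ast;B)$; since $\gamma_t(Z^\ast)$ is strictly increasing and $\gamma_t$ is generic, this happens if and only if $\gamma_{t_{Z^\ast}}\in D^\ast(Z^\ast;B)$, equivalently $\gamma_{t_{Z^\ast}}$ satisfies the conditions of Definition \ref{def: domain of quotient ghost}. Monotonicity of $\gamma_t(W)$ then converts the condition on a splitting subobject $X$ into $t_{Z^\ast}<t_X$ and the condition on a splitting quotient object $Y$ into $t_Y<t_{Z^\ast}$, while condition (0) becomes $t_B<t_A$ using $\gamma_{t_{Z^\ast}}(B)=\gamma_{t_{Z^\ast}}(A)$ (from exactness of $0\to A\to B\to Z^\ast\to 0$) together with the fact that $t_B$, $B$ being an extension of $A$ by $Z^\ast$, lies strictly between $t_A$ and $t_{Z^\ast}$; so in fact $t_{Z^\ast}<t_B<t_A$.

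I expect no conceptual difficulty here. The only real work --- and the main obstacle --- is the combinatorial checklist that $D$ matches the six conditions of Definition \ref{def: domain of quotient ghost} with the six of Definition \ref{def: subobject ghost domains} in the correct order and with the correct sign for each (subobject-type conditions keeping their sign and quotient-type conditions flipping, since $D$ exchanges the two), together with the bookkeeping of the time reversal $\gamma_t\mapsto-\gamma_{-t}$.
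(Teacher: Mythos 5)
Your overall route is exactly the paper's: the paper offers no separate argument, stating the result as the dual of Corollary \ref{cor: stability of nonminimal ghosts} (recall also that for quotient ghosts ``stable'' was \emph{defined} to mean that the generic green path crosses the domain, so the direct translation via monotonicity of $\gamma_t(W)$, your alternative argument, is essentially the intended one-line proof). However, your duality bookkeeping contains a concrete sign slip. Since $D$ interchanges subobjects and quotient objects, the splitting subobjects $X$ of Definition \ref{def: domain of quotient ghost} (cases (1),(5)) correspond under $D$ to the \emph{quotient object} splitting walls of Definitions \ref{def: subobject ghost domains} and \ref{def: stability of ghosts} (their cases (1),(5)), and the splitting quotient objects $Y$ (cases (2),(3),(4)) correspond to the \emph{subobject} splitting walls (cases (2),(3),(4)) --- not type-preservingly, as you assert. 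If one takes your stated type-preserving matching together with the time reversal $t_{DM}=-t_M$ coming from $\gamma_t\mapsto-\gamma_{-t}$ (which you set up correctly), one obtains $t_X<t_{Z^\ast}<t_Y$, the reverse of the claim; your final display agrees with the theorem only because this mismatch cancels against the incorrect correspondence. With the correspondence stated the right way around, the reversal of all time comparisons does give $t_B<t_A$ and $t_Y<t_{Z^\ast}<t_X$ as desired, so the argument is repaired by fixing exactly the ``combinatorial checklist'' you yourself single out as the main work.

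A related point you should flag rather than gloss over: as printed, Definition \ref{def: domain of quotient ghost} lists all six conditions with $\theta(\cdot)<0$. For the statement of Theorem \ref{thm: stability of quotient ghosts} (and for Remark \ref{rem: duality functor}'s claim that this definition is the dual of Definition \ref{def: subobject ghost domains}) the conditions (2)--(4) on splitting quotient objects must be read as $\theta(Y)>0$; otherwise the monotonicity translation gives $t_Y>t_{Z^\ast}$, contradicting the theorem. Your direct argument silently uses this corrected reading, so it proves the theorem as stated, but a complete writeup should note and resolve the sign discrepancy in the definition.
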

}

\subsection{Examples of bifurcation}\label{ss: examples}

We give examples of Cases (1), (2), (4), (5) in Definition \ref{def: subobject ghost domains} and Proposition \ref{prop: bifurcation list} and discuss further Case (3).


{
\subsubsection{Case $(1)$}\label{sss: Case 1}

When $Y$ is an admissible quotient of both $B$ and $C$.
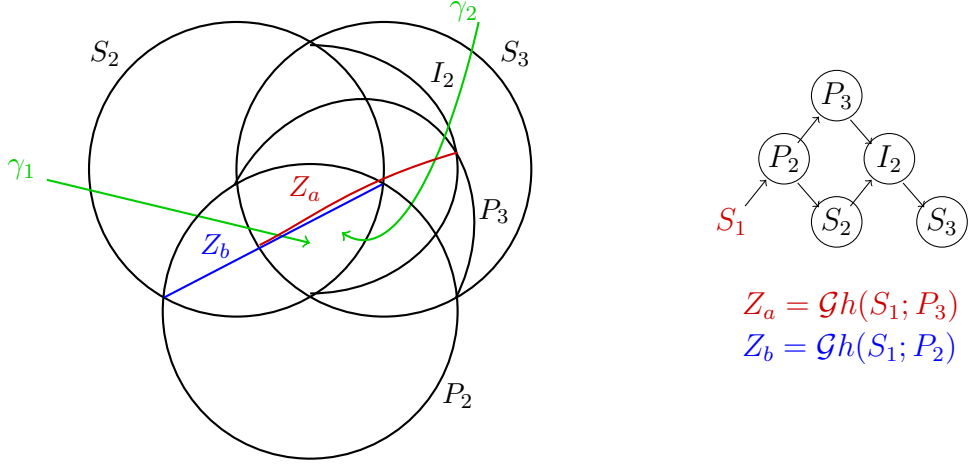
\begin{figure}[htbp]
\begin{center}

\begin{tikzpicture}[scale=1.4] 
%
{
\begin{scope}

\begin{scope}
\clip rectangle (2,1.3) rectangle (0,-1.3);
\draw[thick] (0,0) ellipse [x radius=1.4cm,y radius=1.18cm];
\end{scope}

\begin{scope}[xshift=3.3mm,yshift=-6.9mm]
\begin{scope}[rotate=63]
\clip rectangle (2,1.3) rectangle (0,-1.3);
\draw[thick] (0,0) ellipse [x radius=1.4cm,y radius=1.18cm];
\end{scope}
\end{scope}

\begin{scope}[xshift=.87mm]
		\draw[thick] (1.4,.9) node[left]{\small$I_2$}; 
		\draw[thick] (1.66,-.4) node{\small$P_3$}; 
\end{scope}
\begin{scope}[xshift=-.7cm]
	\draw[thick] (0,0) circle[radius=1.4cm];
		\draw (-1,1.1) node[left]{\small$S_2$}; 
\end{scope}
\begin{scope}[xshift=.7cm]
	\draw[thick] (0,0) circle[radius=1.4cm];
		\draw (1,1.1) node[right]{\small$S_3$};
		\draw[thick,green!80!black,->] (.9,1.4)..controls (.7,.5) and (.2,-1)..(-.4,-.6);
		\draw[green!80!black] (1,1.5) node[left]{$\gamma_2$};
\end{scope}
\draw[thick,green!80!black,->] (-2.5,-.1)--(0,-.7);
\draw[green!80!black] (-2.5,0) node[left]{$\gamma_1$};
\begin{scope}[yshift=-1.35cm]
	\draw[thick] (0,0) circle[radius=1.4cm];
	\draw (1.15,-.8) node[right]{\small$P_2$}; 
\end{scope}
%
\coordinate(B1) at (-1.39,-1.22);
\coordinate(B2) at (.69,-.14);
\coordinate(B) at (-.9,-.95);
\coordinate(A) at (1.4,.16);
\coordinate(C) at (-.48,-.72);
\coordinate(AC) at (-0.05,-.4);
\draw[red!80!black] (AC) node[above]{\small$Z_a$};
\draw[blue] (B) node[above]{\small$Z_b$};
\draw[thick,blue] (B1)--(B2);
\draw[thick,red!80!black] (A)..controls (.4,-.13) and (0,-.48)..(C); 
\end{scope} 
}
{ 
\begin{scope}[xshift=4cm,yshift=-.5cm]
\draw[red!80!black] (0,0) node{$S_1$}; 
\draw(0.5,0.6)circle[radius=2.4mm];
\draw (0.5,0.6) node{$P_2$}; 
\draw (1.5,0.6) node{$I_2$};
\draw(1.5,0.6)circle[radius=2.4mm];
\draw (1,1.2) node{$P_3$};
\draw(1,1.2)circle[radius=2.4mm];
\draw (1,0) node{$S_2$}; 
\draw(1,0)circle[radius=2.4mm];
\draw (2,0) node{$S_3$}; 
\draw(2,0)circle[radius=2.4mm];
\draw[->] (0.13,0.15)--(.33,.4);
\draw[->] (1.13,0.15)--(1.33,0.4);
\draw[->] (0.63,0.75)--(.83,1);
\draw[->] (0.63,0.37)--(.83,.14);
\draw[->] (1.63,.37)--(1.83,.14);
\draw[->] (1.13,.97)--(1.33,.74);
\end{scope}
\begin{scope}[xshift=4cm,yshift=-1.3cm]
\draw[red!80!black] (0,0)node[right]{$Z_a=\cG h(S_1;P_3)$};
\draw[blue] (0,-.4)node[right]{$Z_b=\cG h(S_1;P_2)$};
\end{scope}
}
\end{tikzpicture}

\caption{This is Case (1). We saw this in \cite{GrInvRedo}. Here the same missing module $S_1$ produces two subobject ghosts. Two green paths are indicated.}
\label{Figure01}
\end{center}
\end{figure}

This case, shown in Figure \ref{Figure01}, has two ghosts of the same missing module $Z=S_1$. This is because $B\to C$ and $B'\to C'$ form a Cartesian square. The horizontal arrows have the same kernel $Z=S_1$ and the vertical arrows have the same cokernel $Y=S_3$ which gives the splitting wall $D(S_3)$.
\[
\xymatrixrowsep{10pt}\xymatrixcolsep{10pt}
\xymatrix{
\color{blue}Z_b: &Z'\ar[d]^=\ar[r]&B'\ar[d]\ar[r] &
	C'\ar[d]& &S_1\ar[d]^=\ar[r]& P_2\ar[d]\ar[r] &
	S_2\ar[d]\\
\color{red}Z_a: &Z\ar[r] & B\ar[r]\ar[d]& 
	C\ar[d]& &S_1\ar[r] & P_3\ar[r]\ar[d]& 
	I_2\ar[d]\\
&	& Y\ar[r]^=&Y&&	 & S_3\ar[r]^=& S_3
	} 
\]
{
The green paths $\gamma_1,\gamma_2$ give the following MGSs where $(I_2)$ and $(P_3)$ indicate the positions of unstable modules, i.e, $t_{S_2}<t_{I_2}<t_{P_2}<t_{P_3}<t_{S_3}<t_{S_1}$ where $t_M$ is the time the green path crosses the hyperplane $H(M)$.
\[
	S_2,(I_2),P_2,(P_3),S_3,Z_a,Z_b  \quad \text{ and }\quad  S_3,I_2,P_3,Z_a,P_2,S_2 
\]
In the first sequence $Z_a$ is stable because the unstable modules $I_2$ and $P_3$ come in that order and $S_3$ comes before $Z_a$ (and the concurrent $Z_b$). $Z_b$ is stable because $S_2$ comes before $P_2$. In the second sequence $Z_a$ is stable because of the subsequence $S_3,I_2,P_3,Z_a$. $Z_b$ is unstable since $S_2$ comes after $P_2$.
}
}


{
\subsubsection{Case $(2)$}\label{sss: Case 2} When $X\in\cG$ is a submodule of both $B$ and $C$.
\begin{figure}[htbp]
\begin{center}

\begin{tikzpicture}[scale=1.4] 
%
{
\begin{scope}

\begin{scope}
\clip rectangle (2,1.3) rectangle (0,-1.3);
\draw[thick] (0,0) ellipse [x radius=1.4cm,y radius=1.2cm];
\end{scope}

\begin{scope}[xshift=-3.5mm,yshift=-6.9mm]
\begin{scope}[rotate=-63]
\clip rectangle (2,1.3) rectangle (0,-1.3);
\draw[thick] (0,0) ellipse [x radius=1.4cm,y radius=1.18cm];
\end{scope}
\end{scope}

\begin{scope}[xshift=.87mm]
		\draw[thick] (1.4,.9) node[left]{\small$I_1$}; 
\end{scope}
\begin{scope}[xshift=-.7cm]
	\draw[thick] (0,0) circle[radius=1.4cm];
		\draw (-1,1.1) node[left]{\small$S_1$}; 
\end{scope}
\begin{scope}[xshift=.7cm]
	\draw[thick] (0,0) circle[radius=1.4cm];
		\draw (1,1.1) node[right]{\small$S_2$}; 
\end{scope}
\begin{scope}[yshift=-1.35cm]
	\draw[thick] (0,0) circle[radius=1.4cm];
	\draw (1.15,-.8) node[right]{\small$I_3$}; 
\end{scope}
\draw (0,-2.23) node{\small$P_2$};
%
\coordinate(B1) at (1.39,-1.22);
\coordinate(B2) at (-.69,-.14);
\coordinate(B) at (0,-.5);
\coordinate(A) at (.85,-.98);
\coordinate(C) at (.48,-.78);
\coordinate(AC) at (.56,-.75);
\draw[red!80!black] (AC) node[below]{\tiny$Z_a$};
\draw[blue] (B) node[above]{\small$Z_b$};
\draw[thick,blue] (B1)--(B2);
\draw[thick,red!80!black] (A)--(C); 
\end{scope} 
}
{ 
\begin{scope}[xshift=4cm,yshift=-.5cm]
\draw (0,1.2) node{$S_1$}; 
\draw(0,1.2)circle[radius=2.4mm];
\draw(0.5,0.6)circle[radius=2.4mm];
\draw (0.5,0.6) node{$P_2$}; 
\draw (1.5,0.6) node{$S_2$}; 
\draw(1.5,0.6)circle[radius=2.4mm];
\draw (1,1.2) node{$I_3$}; 
\draw(1,1.2)circle[radius=2.4mm];
\draw (1,0) node{$I_1$}; 
\draw(1,0)circle[radius=2.4mm];
\draw[red!80!black] (0,0) node{$S_3$}; 
\draw[->] (0.13,0.15)--(.33,.4);
\draw[->] (1.13,0.15)--(1.33,0.4);
\draw[->] (0.63,0.75)--(.83,1);
\draw[->] (0.63,0.37)--(.83,.14);
\draw[->] (1.13,.97)--(1.33,.74);
\draw[->] (.13,.97)--(.33,.74);
\end{scope}

\begin{scope}[xshift=4cm,yshift=-1.3cm]
\draw[red!80!black] (0,0)node[right]{$Z_a=\cG h(S_3;P_2)$};
\draw[blue] (0,-.4)node[right]{$Z_b=\cG h(S_3,I_3)$};
\end{scope}
\draw[thick,green!90!black,->] (2.5,-.5)--(0,-.95);
\draw[green!90!black] (2.5,-.5) node[below]{$\gamma$};
}
\end{tikzpicture}

\caption{This is an example of Case (2). This is the quiver $1\ot 2\to 3$. The AR quiver is shown on the right with torsion class $\,^\perp S_3$ circled. The minimal ghost $Z_b$ splits off a large ghost (with smaller domain) $Z_a$ when it crosses the splitting wall $D(S_1)$. The domain of $Z_a$ is very short, going from $D(S_1)$ to its source $D(I_1)\cap D(P_2)$. Since these are ghosts of the same missing module, their domains are colinear. However, $Z_a$ is slightly underneith $Z_b$ because there is a homomorphism $Z_a\to Z_b$. One green path is shown. }
\label{Figure03} 
\end{center}
\end{figure}
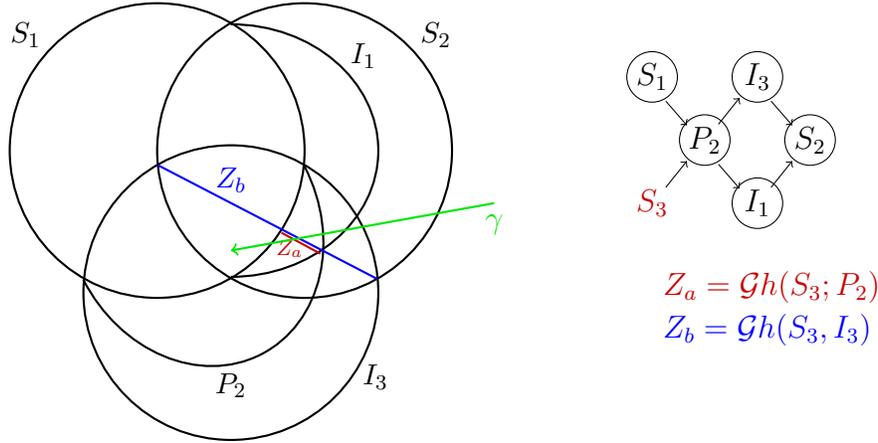
This case, shown in Figure \ref{Figure03}, is similar to Case (1) in that we have two ghosts of the same missing module. These form a Cartesian square and we interpret this as a short exact sequence $S_1\to Z_a\to Z_b$. The splitting wall is $D(S_1)$.
\[
\xymatrixrowsep{10pt}\xymatrixcolsep{10pt}
\xymatrix{
\color{blue}Z_b: &Z'\ar[r]& B'\ar[r] &
	C'& &S_3\ar[r]& I_3\ar[r] &
	S_2\\
\color{red}Z_a: &Z\ar[u]_=\ar[r] & B\ar[r]\ar[u]& 
	C\ar[u]& &S_3\ar[u]_=\ar[r] & P_2\ar[r]\ar[u]& 
	I_1\ar[u]\\
&	& X\ar[r]^=\ar[u]& X\ar[u]&&	& S_1\ar[r]^=\ar[u]& S_1\ar[u]
	} 
\]

By a \emph{maximal green sequence with ghosts} we mean the walls crossed by sequence of wall crossings (including the ghost walls) in the positive direction. It is clear that, if we delete the ghost walls, we will be left with a standard MGS. The wall crossing sequence of any generic smooth green path will be a MGS with ghosts.

\begin{eg}\label{eg: MGS with ghosts 2}
One can see that the only MGSs with ghosts which include the subobject ghost $Z_a=\cG h(S_3;P_2)$ are the following where $Z_b=\cG h(S_3;I_3)$.
\[
	S_2,I_3,I_1,P_2,Z_b,Z_a,S_1\quad \text{and}\quad S_2,I_1,I_3,P_2,Z_b,Z_a,S_1.
\]
The first sequence is given by the green path $\gamma$ in Figure \ref{Figure03}. These illustrate Theorem \ref{thm: ghosts are stable iff path goes though domain} as follows. The ghost $Z_b$ is stable because of the subsequence $S_2,I_3,Z_b$ in both cases. This is condition (0): $C'$ before $B'$. $Z_a$ is stable because we have the subsequence $I_1,P_2,Z_a,S_1$. In general we must have: $C,B,Z_a,X$. We put $Z_b$ before $Z_a$ because of the exact sequence $X\to Z_a\to Z_b$. However, this is purely heuristic since the domains of $Z_a,Z_b$ are on top of each other and any path $\gamma_t$ will cross both at the same time.

If we go slightly above and below these two paths, $Z_a$ becomes unstable and we get MGSs: 
\[
	S_2,I_1,I_3,P_2,S_1,Z_b\quad \text{and}\quad S_2,I_3,Z_b,P_2,I_1,S_1
\]
In the first sequence, $Z_a$ is unstable because $S_1$ came before $Z_b$ (and thus also before $Z_a$ since they are concurrent). $Z_b$ is still stable in both cases because of the subsequence $S_2,I_3,Z_b$. In the second sequence, $Z_a$ is unstable since $P_2$ comes before $I_1$.
\end{eg}

}


\subsubsection{Case $(3)$}\label{sss: Case 3} When $X\in\cG$ is a subobject of $Z$. In that case, $X$ is also a subobject of $B$ and we get the following diagram. See also Figure \ref{Figure05} reproduced on the right.
\vs5

\begin{minipage}{0.6\textwidth}
{ 
\[
\xymatrixrowsep{10pt}\xymatrixcolsep{10pt}
\xymatrix{
&X \ar[d]\ar[r]^=&X \ar[d]& &&S_1\ar[d]\ar[r]^=&S_1\ar[d]& \\
Z_a: &Z\ar[d]\ar[r]& B\ar[d]\ar[r] &
	C\ar[d]^= & &P_2\ar[d]\ar[r]& P_3\ar[d]\ar[r] &
	S_3\ar[d]\ar[d]^=\\
Z_b: &Z'\ar[r] & B'\ar[r]& 
	C'& &S_2\ar[r] & I_2\ar[r]& 
	S_3
	} 
\]
} 
\end{minipage}
\hfill 
\begin{minipage}{0.35\textwidth}
%
{
\begin{tikzpicture}
%
{
\begin{scope}
\begin{scope}
\clip rectangle (2,1.3) rectangle (0,-1.3);
\draw[thick] (0,0) ellipse [x radius=1.5cm,y radius=1.21cm];
\end{scope}
\begin{scope}[xshift=.87mm]
		\draw[thick] (1.2,.6) node[left]{\small$P_3$};
\end{scope}
\begin{scope}[xshift=-.7cm]
	\draw[thick] (0,0) circle[radius=1.4cm];
		\draw (-1,1.1) node[left]{\small$S_1$};
\end{scope}
\begin{scope}[xshift=.7cm]
	\draw[thick] (0,0) circle[radius=1.4cm];
		\draw (1,1.1) node[right]{\small$I_2$};
\end{scope}
\begin{scope}[yshift=-1.35cm]
	\draw[thick] (0,0) circle[radius=1.4cm];
	\draw (1.15,-.8) node[right]{\small$S_3$};
\end{scope}
%
\coordinate(B1) at (1.39,-1.22);
\coordinate(B2) at (-.69,-.14);
\coordinate(B) at (-.1,-.9);
\coordinate(A) at (1.24,-.69);
\coordinate(C) at (.48,-.75);
\coordinate(AC) at (.84,-.73);
\draw[red!80!black] (AC) node[above]{\tiny$Z_a$};
\draw[blue] (B) node[above]{\small$Z_b$};
\draw[thick,blue] (B1)--(B2);
\draw[thick,red!80!black] (A)--(C);
\end{scope} 

}
\end{tikzpicture}
}
\end{minipage}%
\vs2

{
The ghosts $Z_a,Z_b$ are oriented upward. Subobject ghosts are always oriented in the same direction as $B$ which is the larger of the two modules $B,C$ which support it. In this case $B=I_2$ and $B'=P_3$. For example, we have MGSs which pass through these from below:
\[
	S_1,S_3,I_2,Z_b\text{   and   } I_2,S_3,P_3,Z_a,S_1
\]
In the first case $Z_b$ is stable because $S_3$ comes before $I_2$ and $Z_a$ is unstable since $S_1$ comes too soon. In the second MGS, $Z_a$ is stable since $S_3$ comes before $P_3$ and $Z_a$ comes before the subobject splitting module $S_1$. $Z_b$ is unstable since $I_2$ comes before $S_3$.
}

{
\begin{eg}\label{eg: example of duality}
We give an example of the duality functor $D$. This functor takes bricks to bricks and short exact sequence to short exact sequences. If reverses all arrows and it reverses all MGSs. Thus we obtain quotient ghosts $Z_a^\ast$ and $Z_b^\ast$ given by the rows of the diagram:
\[
\xymatrixrowsep{10pt}\xymatrixcolsep{10pt}
\xymatrix{
\color{blue}Z_b^\ast: &A'\ar[r]\ar[d]^=& B'\ar[r]\ar[d] &
	{Z^\ast}'\ar[d]& &DS_3\ar[r]\ar[d]^=&D I_2\ar[r]\ar[d] &
	DS_2\ar[d]\\
\color{red}Z_a^\ast: &A\ar[r] & B\ar[r]\ar[d]& 
	Z^\ast\ar[d]& &DS_3\ar[r] & DP_3\ar[r]\ar[d]& 
	DP_2\ar[d]\\
&	& Y\ar[r]^=& Y&&	& DS_1\ar[r]^=& DS_1
	} 
\]
The MGSs are:
\[
	Z_b^\ast,DI_2,DS_3,DS_1\text{  and  } DS_1,Z_a^\ast,DP_3,DS_3,DI_2.
\]
For example, the cokernel of any nonzero morphism $DS_3\to DI_2$ is not in $\cF$ since the kernel of any nonzero morphism $I_2\to S_3$ is not in $\cG$.
\end{eg}
}

{
\subsubsection{Case $(4)$}\label{sss: Case 4}  When $X\in\cG$ is a subobject of $C$ so that the epimorphism $B\onto C/X$ is not admissible.
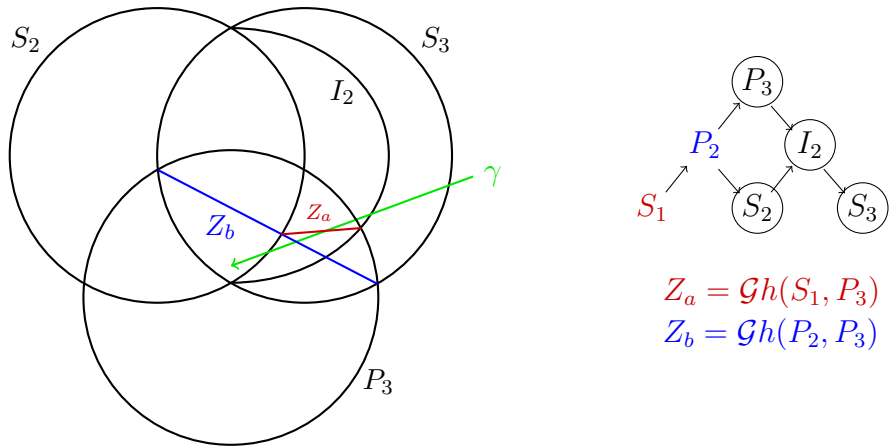
\begin{figure}[htbp]
\begin{center}

\begin{tikzpicture}[scale=1.4] 
%
{
\begin{scope}
\begin{scope}
\clip rectangle (2,1.3) rectangle (0,-1.3);
\draw[thick] (0,0) ellipse [x radius=1.5cm,y radius=1.21cm];
\end{scope}
\begin{scope}[xshift=.87mm]
		\draw[thick] (1.2,.6) node[left]{\small$I_2$}; 
\end{scope}
\begin{scope}[xshift=-.7cm]
	\draw[thick] (0,0) circle[radius=1.4cm];
		\draw (-1,1.1) node[left]{\small$S_2$}; 
\end{scope}
\begin{scope}[xshift=.7cm]
	\draw[thick] (0,0) circle[radius=1.4cm];
		\draw (1,1.1) node[right]{\small$S_3$};
\end{scope}
\draw[green!90!black,->,thick] (2.3,-.2)--(0,-1.05);
\draw[green!90!black] (2.3,-.2) node[right]{$\gamma$};
\begin{scope}[yshift=-1.35cm]
	\draw[thick] (0,0) circle[radius=1.4cm];
	\draw (1.15,-.8) node[right]{\small$P_3$}; 
\end{scope}
%
\coordinate(B1) at (1.39,-1.22);
\coordinate(B2) at (-.69,-.14);
\coordinate(B) at (-.1,-.9);
\coordinate(A) at (1.24,-.69);
\coordinate(C) at (.48,-.75);
\coordinate(AC) at (.84,-.73);
\draw[red!80!black] (AC) node[above]{\tiny$Z_a$};
\draw[blue] (B) node[above]{\small$Z_b$};
\draw[thick,blue] (B1)--(B2);
\draw[thick,red!80!black] (A)--(C);
\end{scope} 
}

{ 
\begin{scope}[xshift=4cm,yshift=-.5cm]
\draw[red!80!black] (0,0) node{$S_1$}; 
\draw(1,0)circle[radius=2.4mm];
\draw[blue] (0.5,0.6) node{$P_2$}; 
\draw (1.5,0.6) node{$I_2$};
\draw(1.5,0.6)circle[radius=2.4mm];
\draw (1,1.2) node{$P_3$};
\draw(1,1.2)circle[radius=2.4mm];
\draw (1,0) node{$S_2$}; 
\draw (2,0) node{$S_3$}; 
\draw(2,0)circle[radius=2.4mm];
\draw[->] (0.13,0.15)--(.33,.4);
\draw[->] (1.13,0.15)--(1.33,0.4);
\draw[->] (0.63,0.75)--(.83,1);
\draw[->] (0.63,0.37)--(.83,.14);
\draw[->] (1.63,.37)--(1.83,.14);
\draw[->] (1.13,.97)--(1.33,.74);
\end{scope}

\begin{scope}[xshift=4cm,yshift=-1.3cm]
\draw[red!80!black] (0,0)node[right]{$Z_a=\cG h(S_1,P_3)$};
\draw[blue] (0,-.4)node[right]{$Z_b=\cG h(P_2,P_3)$};
\end{scope}
}

\end{tikzpicture}

\caption{This is an example of Case (4). $X=S_2$ maps to $C=I_2$ to give $C'=S_3$. Thus $D(S_2)$ splits $Z_a$ off of $Z_b$. The quiver is $1\ot 2\ot3$ with AR-quiver shown on the right with torsion class circled.  The unique green path through the domain of $Z_a$ is indicated.}
\label{Figure07}
\end{center}
\end{figure}

Figure \ref{Figure07} shows an example of Case (4). The ghosts are related by the following diagram. In Case (4) we always have this, an example of the Snake Lemma, which says that $X=\ker(C\to C')=\coker(Z\to Z')$ when $B=B'$.
\[
\xymatrixrowsep{10pt}\xymatrixcolsep{10pt}
\xymatrix{
&&& X\ar[d]&&&& S_2\ar[d]\\
Z_a: &Z\ar[d]\ar[r]& B\ar[d]^=\ar[r] &
	C\ar[d] & &S_1\ar[d]\ar[r]& P_3\ar[d]^=\ar[r] &
	I_2\ar[d]\\
Z_b: &Z'\ar[r]\ar[d] & B'\ar[r]& 
	C'& &P_2\ar[r]\ar[d] & P_3\ar[r]& 
	S_3\\
&	X &&&&	S_2
	} 
\]

{
The unique MGS containing $Z_a$ given by the green path in Figure \ref{Figure07} is
\[
	S_3,I_2,P_3,Z_a,Z_b,S_2
\]
$Z_b$ is stable since $C'=S_3$ comes before $B'=P_3$. $Z_a$ is stable because $C=I_2$ comes before $B=P_3$ and $Z_a$ comes before the splitting subobject $X=S_2$.

This example has the feature that $Z_a$ comes before $Z_b$ in the MGS even though there is a morphism $Z_a\to Z_b$. This is unexpected, however, this does not contradict any of the definitions or theorems.
}

}


{
\subsubsection{Case $(5)$}\label{sss: Case 5} When $f:B\to Y$ is an epimorphism so that $\ker f$ maps onto $C$.
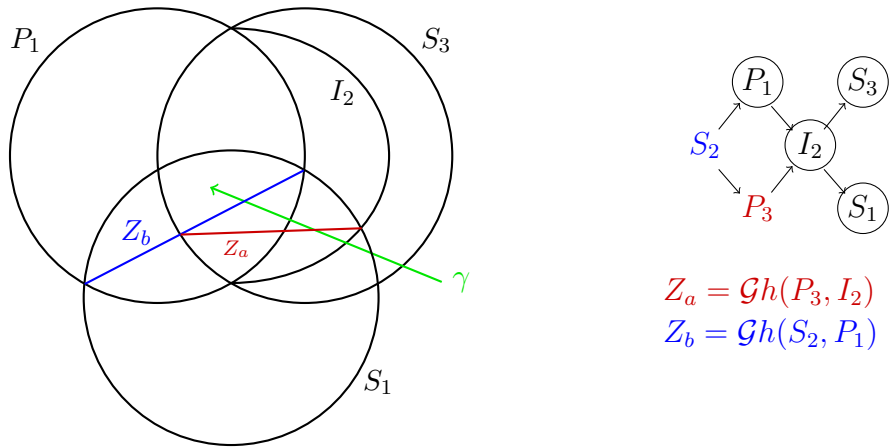
\begin{figure}[htbp]
\begin{center}

\begin{tikzpicture}[scale=1.4] 
{
\begin{scope}
\begin{scope}
\clip rectangle (2,1.3) rectangle (0,-1.3);
\draw[thick] (0,0) ellipse [x radius=1.5cm,y radius=1.21cm];
\end{scope}
\begin{scope}[xshift=.87mm]
		\draw[thick] (1.2,.6) node[left]{\small$I_2$}; 
\end{scope}
\begin{scope}[xshift=-.7cm]
	\draw[thick] (0,0) circle[radius=1.4cm];
		\draw (-1,1.1) node[left]{\small$P_1$}; 
\end{scope}
\begin{scope}[xshift=.7cm]
	\draw[thick] (0,0) circle[radius=1.4cm];
		\draw (1,1.1) node[right]{\small$S_3$}; 
\end{scope}
\draw[thick,green!90!black,->] (2,-1.2)--(-.2,-.3);
\draw[green!90!black] (2,-1.2) node[right]{$\gamma$};
\begin{scope}[yshift=-1.35cm]
	\draw[thick] (0,0) circle[radius=1.4cm];
	\draw (1.15,-.8) node[right]{\small$S_1$}; 
\end{scope}
%
\coordinate(B1) at (-1.39,-1.22);
\coordinate(B2) at (.69,-.14);
\coordinate(B) at (-.9,-.95);
\coordinate(A) at (1.24,-.69);
\coordinate(C) at (-.48,-.75);
\coordinate(AC) at (0.05,-1.07);
\draw[red!80!black] (AC) node[above]{\tiny$Z_a$};
\draw[blue] (B) node[above]{\small$Z_b$};
\draw[thick,blue] (B1)--(B2);
\draw[thick,red!80!black] (A)--(C);
\end{scope} 
}
{ 
\begin{scope}[xshift=4cm,yshift=-.5cm]
\draw (2,1.2) node{$S_3$}; 
\draw(2,1.2) circle[radius=2.4mm];
\draw[blue] (0.5,0.6) node{$S_2$}; 
\draw (1.5,0.6) node{$I_2$};
\draw(1.5,0.6)circle[radius=2.4mm];
\draw (1,1.2) node{$P_1$}; 
\draw(1,1.2)circle[radius=2.4mm];
\draw[red!80!black] (1,0) node{$P_3$}; 
\draw (2,0) node{$S_1$}; 
\draw(2,0)circle[radius=2.4mm];
\draw[->] (1.13,0.15)--(1.33,0.4);
\draw[->] (0.63,0.75)--(.83,1);
\draw[->] (0.63,0.37)--(.83,.14);
\draw[->] (1.63,.37)--(1.83,.14);
\draw[->] (1.13,.97)--(1.33,.74);
\draw[->] (1.63,0.75)--(1.83,1);
\end{scope}

\begin{scope}[xshift=4cm,yshift=-1.3cm]
\draw[red!80!black] (0,0)node[right]{$Z_a=\cG h(P_3,I_2)$};
\draw[blue] (0,-.4)node[right]{$Z_b=\cG h(S_2,P_1)$};
\end{scope}
}
\end{tikzpicture}

\caption{This shows Case (5). The quiver is $1\to 2\ot 3$ with AR-quiver displayed on the right with torsion class $\,^\perp P_3$ circled. Here $C=C'$, but $B=I_2$ and $Z=P_3$ map onto $Y=S_3$. The minimal ghost $Z_b$ gives birth to $Z_a$ when it crosses the splitting wall $D(S_3)$. One green path is drawn.}
\label{Figure04}
\end{center}
\end{figure}

An example of Case (5) is shown in Figure \ref{Figure04}. The ghosts are related by the following diagram which behaves like a short exact sequence $Z_b\to Z_a\to S_3$.
\[
\xymatrixrowsep{10pt}\xymatrixcolsep{10pt}
\xymatrix{
\color{blue}Z_b: &Z'\ar[d]\ar[r]& B'\ar[d]\ar[r] &
	C'\ar[d]^=& &S_2\ar[d]\ar[r]& P_1\ar[d]\ar[r] &
	S_1\ar[d]^=\\
\color{red}Z_a: &Z\ar[r]\ar[d] & B\ar[r]\ar[d]& 
	C& &P_3\ar[r]\ar[d] & I_2\ar[r]\ar[d]& 
	S_1\\
&	Y\ar[r]^= & Y&&&	S_3\ar[r]^= & S_3
	} 
\]
{
The green path in Figure \ref{Figure04} gives the MGS
\[
	S_3,S_1,I_2,Z_a,P_1,Z_b.
\]
$Z_b$ is stable because $S_1$ comes before $P_1$. $Z_a$ is stable because $S_1$ comes before $I_2$ and the quotient splitting object $Y=S_3$ came earlier.
}

}


{
\subsection{Extension ghosts}\label{ss: extension ghosts}

We showed in \cite{GrInvRedo} how the subobject and quotient object ghosts help to calculate the ``generalized Grassmann invariant''
\[
	\chi:K_3 \ZZ[\pi]\to H_0(\pi;\ZZ_2[\pi]).
\]
Although we already had an explicit formula for this invariant in \cite{GrInvRedo}, the idea for the ghosts is that we can just draw pictures and avoid explicit calculations. With a view towards this application, the key property of ghosts is the bifurcation formula. In all of our examples, the ghosts are created, they bifurcate, then they die. We only get a nontrivial invariant of the picture if ghosts ``escape''. 
}

\begin{defn}\label{def: extension ghost}
Given an extension $A\to B\to C$ where $A,B,C$ are all bricks in a class of objects $\cL$ which may not be extension closed. Then the domain of the \emph{extension ghost} $\widetilde B=\cG h(A\to B\to C)$ is given by $D(\widetilde B)=\{\theta\,:\, \theta(B)=0, \theta(C)\le0\}$. Recall that $D(B)$ is given by the conditions $\theta(B)=0$ and $\theta(Y)\ge0$ for all weakly admissible quotients $Y$ of $B$. This includes $C$ since $B\to C$ is an admissible epimorphism. We say that $\widetilde B$ is a \emph{minimal extension ghost} if $A,C$ are minimal (have no weakly admissible quotients) and $C$ is the only weakly admissible quotient of $B$. (See Figures \ref{Figure11} and \ref{Figure12}.)
\end{defn}

{
The new extension ghosts are a very promising innovation. We hope that they can help to compute the higher Reidemeister torsion invariant of pictures. This will be explained further in the next paper. For now we just give two example. The first example (Figure \ref{Figure11}) shows that, if the same module can be expressed as an extension in two different way, it produces two extension ghosts which will invariably overlap. Figure \ref{Figure13} shows that two ``extension clones'' might be created. This will be explained in \cite{MoreGhosts2}. The second example (Figure \ref{Figure12}) includes the two ghosts from the first example and adds two more ghosts. The new ghosts are ``minimal extension ghosts'' since the middle term in $A\to B\to C$ can be expresses as an extension in only one way and the two objects $A,C$ are minimal, with no admissible quotients.
}

{%
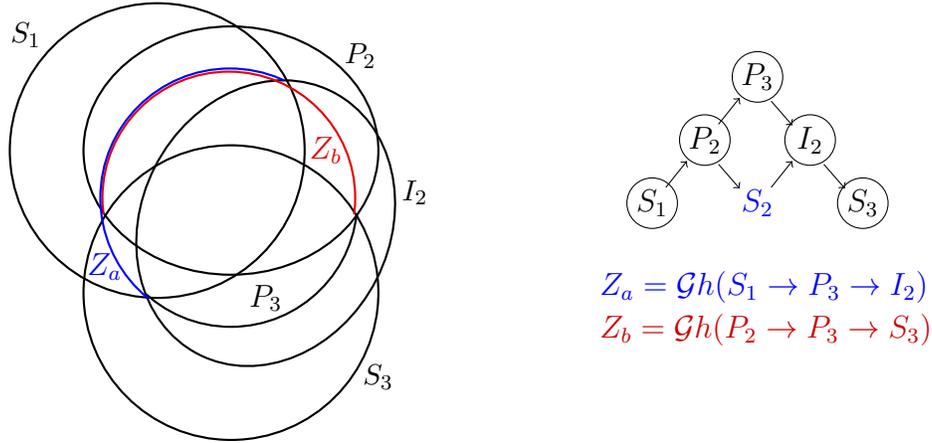
\begin{figure}[htbp]
\begin{center}

\begin{tikzpicture}[scale=1.4] 
%
{
\begin{scope}

\begin{scope}
\draw[thick] (0,0) ellipse [x radius=1.4cm,y radius=1.18cm];
\end{scope}

\begin{scope}[xshift=3.3mm,yshift=-6.9mm]
\begin{scope}[rotate=63]
\draw[thick] (0,0) ellipse [x radius=1.4cm,y radius=1.18cm];
\end{scope}
\end{scope}

\begin{scope}[xshift=.87mm]
		\draw[thick] (1.4,.9) node[left]{\small$P_2$}; 
		\draw[thick] (1.66,-.4) node{\small$I_2$}; 
		
\end{scope}
\begin{scope}[xshift=-.7cm]
	\draw[thick] (0,0) circle[radius=1.4cm];
		\draw (-1,1.1) node[left]{\small$S_1$}; 
\end{scope}
%
\begin{scope}[yshift=-1.35cm]
	\draw[thick] (0,0) circle[radius=1.4cm];
	\draw (1.15,-.8) node[right]{\small$S_3$}; 
\end{scope}
%
\coordinate(B1) at (-1.39,-1.22);
\coordinate(B2) at (.69,-.14);
\coordinate(B) at (-.9,-.95);
\coordinate(A) at (1.4,.16);
\coordinate(C) at (-.48,-.72);
\coordinate(AC) at (-0.05,-.4);
\end{scope} 

\begin{scope}
\clip (-.8,-.6) rectangle (1.3,-2.3);
\draw[thick] (0,-.475) circle[radius=12mm];
\end{scope}

\coordinate(Za) at (-1.2,-1.1);
\coordinate(Zb) at (.9,0);
\coordinate(P3) at (.33,-1.4);

\draw[red!90!black] (Zb)node{$Z_b$};
\draw (P3)node{$P_3$};
\draw[blue] (Za)node{$Z_a$};

\begin{scope}
\clip (-1.3,-.6) rectangle (1.3,1.3);
\draw[thick, red!90!black] (-.02,-.45) circle[radius=12mm];
\end{scope}
\begin{scope}
\clip (-1.5,-1.4) rectangle (.5,.9);
\draw[thick, blue] (-.01,-.45) circle[radius=12.3mm];
\end{scope}
}

{ 
\begin{scope}[xshift=4cm,yshift=-.5cm]
\draw (0,0) node{$S_1$}; 
\draw(0,0)circle[radius=2.4mm];
\draw(0.5,0.6)circle[radius=2.4mm];
\draw (0.5,0.6) node{$P_2$}; 
\draw (1.5,0.6) node{$I_2$};
\draw(1.5,0.6)circle[radius=2.4mm];
\draw (1,1.2) node{$P_3$};
\draw(1,1.2)circle[radius=2.4mm];
\draw[blue] (1,0) node{$S_2$}; 
\draw (2,0) node{$S_3$}; 
\draw(2,0)circle[radius=2.4mm];
\draw[->] (0.13,0.15)--(.33,.4);
\draw[->] (1.13,0.15)--(1.33,0.4);
\draw[->] (0.63,0.75)--(.83,1);
\draw[->] (0.63,0.37)--(.83,.14);
\draw[->] (1.63,.37)--(1.83,.14);
\draw[->] (1.13,.97)--(1.33,.74);
\end{scope}
\begin{scope}[xshift=3.4cm,yshift=-1.3cm]
\draw[blue] (0,0)node[right]{$Z_a=\cG h(S_1\to P_3\to I_2)$};
\draw[red!80!black] (0,-.4)node[right]{$Z_b=\cG h(P_2\to P_3\to S_3)$};
\end{scope}
}
\end{tikzpicture}

\caption{Since the module $P_3$ is an extension in two ways: as $S_1\to P_3\to I_2$ and $P_2\to P_3\to S_3$, it produces two extension ghosts. These are indicated by red and blue curves which overlap. {\color{blue}$Z_a$} is outside the $I_2$ circle, {\color{red}$Z_b$} is outside the $S_3$ circles. $D(P_3)$ is inside both of these circles.}
\label{Figure11}
\end{center}
\end{figure}
}

{%
\begin{figure}[htbp]
\begin{center}

\begin{tikzpicture}[scale=1.4] 
%
{
\begin{scope}
\begin{scope}
\draw[thick, red!80!black,dashed] (0,0) ellipse [x radius=1.4cm,y radius=1.18cm];
\clip rectangle (2,1.3) rectangle (0,-1.3);
\draw[thick] (0,0) ellipse [x radius=1.4cm,y radius=1.18cm];
\end{scope}

\begin{scope}[xshift=3.3mm,yshift=-6.9mm]
\begin{scope}[rotate=63]
\draw[thick, blue, dashed] (0,0) ellipse [x radius=1.4cm,y radius=1.18cm];
\clip rectangle (-2,-1.3) rectangle (0,1.3);
\draw[thick] (0,0) ellipse [x radius=1.4cm,y radius=1.18cm];
\end{scope}
\end{scope}

\begin{scope}[xshift=.87mm]
		\draw[thick] (1.4,.9) node[left]{\small$P_2$}; 
		\draw[thick,blue] (1.66,-.4) node{$Z_c$}; 
\end{scope}

\begin{scope}[xshift=-.7cm]
	\draw[thick] (0,0) circle[radius=1.4cm];
		\draw (-1,1.1) node[left]{\small$S_1$}; 
\end{scope}
\begin{scope}[xshift=.7cm]
	\draw[thick] (0,0) circle[radius=1.4cm];
	\draw (1,1.1) node[right]{\small$S_2$};
\end{scope}
\begin{scope}[yshift=-1.35cm]
	\draw[thick] (0,0) circle[radius=1.4cm];
	\draw (1.15,-.8) node[right]{\small$S_3$}; 
\end{scope}
%
\coordinate(B1) at (-1.39,-1.22);
\coordinate(B2) at (.69,-.14);
\coordinate(B) at (-.9,-.95);
\coordinate(A) at (1.4,.16);
\coordinate(C) at (-.48,-.72);
\coordinate(AC) at (-0.05,-.4);
\end{scope} 
}
\coordinate(Za) at (-1.2,-1.1);
\coordinate(Zb) at (.9,0);
\coordinate(P3) at (.33,-1.4);
\coordinate(I2) at (.2,-2.25);

\draw[red] (Zb) node{$Z_b$};
\draw (.1,-1.5) node{$P_3$};
\draw[red] (-1.2,.7) node[left]{$Z_d$};
\draw (I2) node{\small$I_2$};

\begin{scope} 
\clip (-1.5,-1.4) rectangle (.5,.9);
\draw[thick, blue] (-.01,-.45) circle[radius=12.3mm];
\end{scope}

\begin{scope}
\clip (-1.3,-.6) rectangle (1.3,1.3);
\draw[thick, red!90!black] (-.02,-.45) circle[radius=12mm];
\end{scope}

\begin{scope} 
\clip (-.8,-.6) rectangle (1.3,-2.3);
\draw[thick] (0,-.475) circle[radius=12mm];
\end{scope}


\draw[blue] (Za) node{$Z_a$};

{ 
\begin{scope}[xshift=4cm,yshift=-.4cm]
\draw (0,0) node{$S_1$}; 
\draw(0,0)circle[radius=2.4mm];
\draw(0.5,0.6)circle[radius=2.4mm];
\draw (0.5,0.6) node{$P_2$}; 
\draw (1.5,0.6) node{$I_2$};
\draw(1.5,0.6)circle[radius=2.4mm];
\draw (1,1.2) node{$P_3$};
\draw(1,1.2)circle[radius=2.4mm];
\draw (1,0) node{$S_2$}; 
\draw(1,0)circle[radius=2.4mm];
\draw (2,0) node{$S_3$}; 
\draw(2,0)circle[radius=2.4mm];
\draw[->] (0.13,0.15)--(.33,.4);
\draw[->] (1.13,0.15)--(1.33,0.4);
\draw[->] (0.63,0.75)--(.83,1);
\draw[->] (0.63,0.37)--(.83,.14);
\draw[->] (1.63,.37)--(1.83,.14);
\draw[->] (1.13,.97)--(1.33,.74);
\end{scope}
\begin{scope}[xshift=3.4cm,yshift=-1.2cm]
\draw[blue] (0,0)node[right]{$Z_a=\cG h(S_1\to P_3\to I_2)$};
\draw[red!80!black] (0,-.4)node[right]{$Z_b=\cG h(P_2\to P_3\to S_3)$};
\draw[blue] (0,-.8)node[right]{$Z_c=\cG h(S_2\to I_2\to S_3)$};
\draw[red!80!black] (0,-1.2)node[right]{$Z_d=\cG h(S_1\to P_2\to S_2)$};
\end{scope}
}
\end{tikzpicture}

\caption{Here there are 4 extension ghosts. The two from Figure \ref{Figure11}: $\color{blue}Z_a=\cG h(S_1\to P_3\to I_2)$ and $\color{red!80!black}Z_b=\cG h(P_2\to P_3\to S_3)$ and two new extension ghosts $\color{blue}Z_c=\cG h(S_2\to I_2\to S_3)$ and $\color{red!80!black}Z_d=\cG h(S_1\to P_2\to S_2)$. The new extension ghosts are minimal extension ghosts. In this figure, we see two bifurcations: $Z_a$ splits off of $Z_c$ when it hits the splitting wall $D(S_1)$ and $Z_b$ splits off of $Z_d$ at the splitting wall $D(S_3)$.}
\label{Figure12} 
\end{center}
\end{figure}
}

{
The example in Figure \ref{Figure11} also has a subobject ghost and a quotient object ghost shown in Figure \ref{Figure10}. Since they are ghosts of the same object $S_2$, their domains overlap.

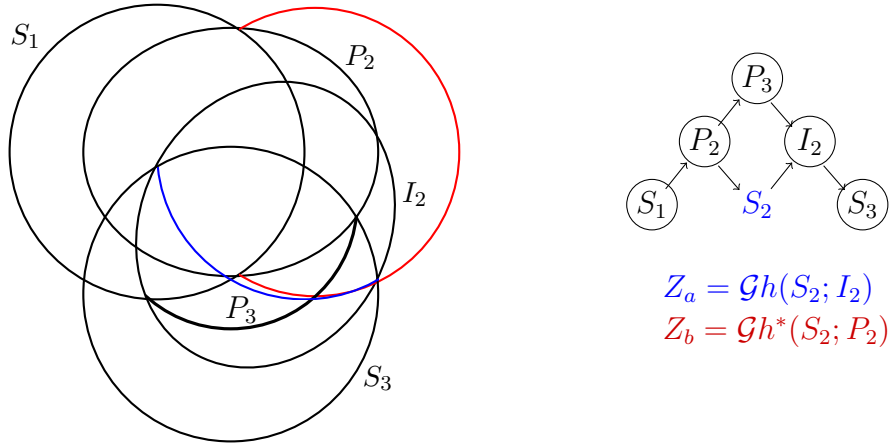
\begin{figure}[htbp]
\begin{center}

\begin{tikzpicture}[scale=1.4] 
%
{
\begin{scope}

\begin{scope}
\draw[thick] (0,0) ellipse [x radius=1.4cm,y radius=1.18cm];
\end{scope}

\begin{scope}[xshift=3.3mm,yshift=-6.9mm]
\begin{scope}[rotate=63]
\draw[thick] (0,0) ellipse [x radius=1.4cm,y radius=1.18cm];
\end{scope}
\end{scope}

\begin{scope}[xshift=.87mm]
		\draw[thick] (1.4,.9) node[left]{\small$P_2$}; 
		\draw[thick] (1.66,-.4) node{\small$I_2$}; 
		
\end{scope}
\begin{scope}[xshift=-.7cm]
	\draw[thick] (0,0) circle[radius=1.4cm];
		\draw (-1,1.1) node[left]{\small$S_1$}; 
\end{scope}
\begin{scope}[xshift=.7cm]
\clip (-.62,-1.5)rectangle (1.5,1.5);
	\draw[thick,red] (0.1,0) circle[radius=1.37cm];
\end{scope}
\begin{scope}[yshift=-1.35cm]
	\draw[thick] (0,0) circle[radius=1.4cm];
	\draw (1.15,-.8) node[right]{\small$S_3$}; 
\end{scope}
%
\coordinate(B1) at (-1.39,-1.22);
\coordinate(B2) at (.69,-.14);
\coordinate(B) at (-.9,-.95);
\coordinate(A) at (1.4,.16);
\coordinate(C) at (-.48,-.72);
\coordinate(AC) at (-0.05,-.4);
\end{scope} 
}

\draw (.1,-1.5) node{$P_3$};

\begin{scope}
\clip (-.8,-.6) rectangle (1.3,-2.3);
\draw[very thick] (0,-.48) circle[radius=12mm];
\end{scope}

\begin{scope}
\clip (-.7,-.15)rectangle (1.4,-1.4);
\draw[thick,blue] (0.7,0.0) circle[radius=1.4cm];
\end{scope}

{ 
\begin{scope}[xshift=4cm,yshift=-.5cm]
\draw (0,0) node{$S_1$}; 
\draw(0,0)circle[radius=2.4mm];
\draw(0.5,0.6)circle[radius=2.4mm];
\draw (0.5,0.6) node{$P_2$}; 
\draw (1.5,0.6) node{$I_2$};
\draw(1.5,0.6)circle[radius=2.4mm];
\draw (1,1.2) node{$P_3$};
\draw(1,1.2)circle[radius=2.4mm];
\draw[blue] (1,0) node{$S_2$}; 
\draw (2,0) node{$S_3$}; 
\draw(2,0)circle[radius=2.4mm];
\draw[->] (0.13,0.15)--(.33,.4);
\draw[->] (1.13,0.15)--(1.33,0.4);
\draw[->] (0.63,0.75)--(.83,1);
\draw[->] (0.63,0.37)--(.83,.14);
\draw[->] (1.63,.37)--(1.83,.14);
\draw[->] (1.13,.97)--(1.33,.74);
\end{scope}
\begin{scope}[xshift=4cm,yshift=-1.3cm]
\draw[blue] (0,0)node[right]{$Z_a=\cG h(S_2;I_2)$};
\draw[red!80!black] (0,-.4)node[right]{$Z_b=\cG h^\ast(S_2;P_2)$};

\end{scope}
}
\end{tikzpicture}

\caption{Figure \ref{Figure11} also has a subobject ghost and quotient object ghost shown above. Thus, this example has all three kinds of ghosts.}
\label{Figure10}
\end{center}
\end{figure}

}

{%
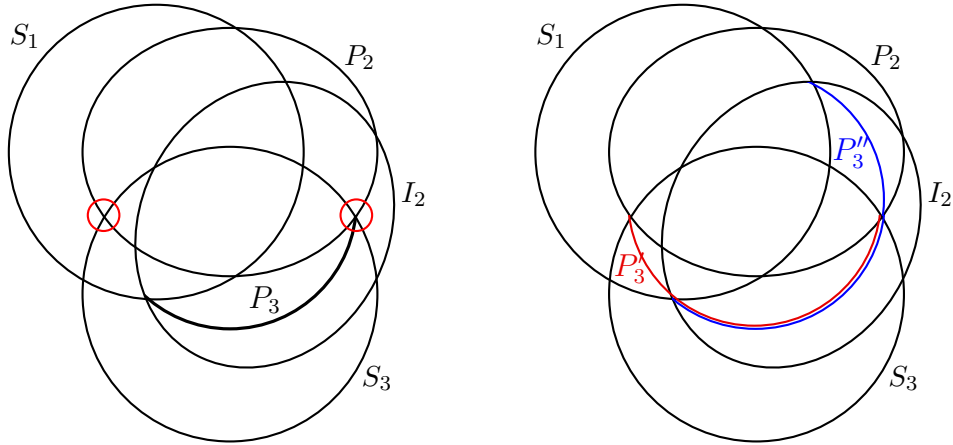
\begin{figure}[htbp]
\begin{center}

\begin{tikzpicture}[scale=1.4] 
%
{
\begin{scope}

\begin{scope}
\draw[thick] (0,0) ellipse [x radius=1.4cm,y radius=1.18cm];
\end{scope}

\begin{scope}[xshift=3.3mm,yshift=-6.9mm]
\begin{scope}[rotate=63]
\draw[thick] (0,0) ellipse [x radius=1.4cm,y radius=1.18cm];
\end{scope}
\end{scope}

\begin{scope}[xshift=.87mm]
		\draw[thick] (1.4,.9) node[left]{\small$P_2$}; 
		\draw[thick] (1.66,-.4) node{\small$I_2$}; 
		
\end{scope}
\begin{scope}[xshift=-.7cm]
	\draw[thick] (0,0) circle[radius=1.4cm];
		\draw (-1,1.1) node[left]{\small$S_1$}; 
\end{scope}
%
\begin{scope}[yshift=-1.35cm]
	\draw[thick] (0,0) circle[radius=1.4cm];
	\draw (1.15,-.8) node[right]{\small$S_3$}; 
\end{scope}
%
\coordinate(B1) at (-1.39,-1.22);
\coordinate(B2) at (.69,-.14);
\coordinate(B) at (-.9,-.95);
\coordinate(A) at (1.4,.16);
\coordinate(C) at (-.48,-.72);
\coordinate(AC) at (-0.05,-.4);
\end{scope} 

\begin{scope}
\clip (-.8,-.6) rectangle (1.3,-2.3);
\end{scope}


\draw[red!90!black] (-1.2,-1.1)node{$P_3'$};
\draw[blue] (.9,0)node{$P_3''$};

\begin{scope}
\clip (-1.3,-.6) rectangle (1.3,-2.3);
\draw[thick, red!90!black] (-.02,-.45) circle[radius=12mm];
\end{scope}
\begin{scope}
\clip (-.8,-1.8) rectangle (1.3,-.6);
\draw[thick, blue] (-.01,-.45) circle[radius=12.3mm];
\end{scope}

\begin{scope}
\clip (0,.66) rectangle (1.3,-.6);
\draw[thick, blue] (-.02,-.45) circle[radius=12.3mm];
\end{scope}
}
{
\begin{scope}[xshift=-5cm] 

\draw (.33,-1.4)node{$P_3$};

\begin{scope}
\clip (-.8,-.6) rectangle (1.3,-2.3);
\draw[very thick] (0,-.48) circle[radius=12mm];
\end{scope}

\begin{scope}
\draw[thick] (0,0) ellipse [x radius=1.4cm,y radius=1.18cm];
\end{scope}

\begin{scope}[xshift=3.3mm,yshift=-6.9mm]
\begin{scope}[rotate=63]
\draw[thick] (0,0) ellipse [x radius=1.4cm,y radius=1.18cm];
\end{scope}
\end{scope}

\begin{scope}[xshift=.87mm]
		\draw[thick] (1.4,.9) node[left]{\small$P_2$}; 
		\draw[thick] (1.66,-.4) node{\small$I_2$}; 
\end{scope}
\begin{scope}[xshift=-.7cm]
	\draw[thick] (0,0) circle[radius=1.4cm];
		\draw (-1,1.1) node[left]{\small$S_1$}; 
\end{scope}
%
\begin{scope}[yshift=-1.35cm]
	\draw[thick] (0,0) circle[radius=1.4cm];
	\draw (1.15,-.8) node[right]{\small$S_3$}; 
\end{scope}
%
\draw[thick,red] (-1.2,-.6) circle[radius=1.5mm]; 
\draw[thick,red] (1.2,-.6) circle[radius=1.5mm]; 
\coordinate(B1) at (-1.39,-1.22);
\coordinate(B2) at (.69,-.14);
\coordinate(B) at (-.9,-.95);
\coordinate(A) at (1.4,.16);
\coordinate(C) at (-.48,-.72);
\coordinate(AC) at (-0.05,-.4);
\end{scope} 
}

\end{tikzpicture}

\caption{On the left is the relative stability diagram for the example in Figure \ref{Figure11}. This diagram does not satisfy the definition of a ``picture'' as given in \cite{ITW}, \cite{IT14} since, e.g., $S_3$ and $P_2$ commute at the left circled crossing but these walls do not commute at the right circled crossing. We need this to be a true picture. So, we need to add two ``extension clones'' as shown on the right. The clones are indicated by red and blue curves which overlap. Thus $D(P_3)$ is doubled! More will be explained in \cite{MoreGhosts2}.
}
\label{Figure13}
\end{center}
\end{figure}
}

{
\section{Plans for the future}\label{ss: plans}

\begin{enumerate}
\item With the help of several coauthors, the next paper \cite{MoreGhosts2} will explain the relation of ghosts to algebraic K-theory. Extension ghosts are used to give new formulas for algebraic $K$-theory invariants and the formulas include exchange points. Mike Sullivan calls these ``exchange ghosts''. We also hope to extend these extension ghost computations to the higher Reidemeister torsion invariant of \cite{IgKlein}.
\item We are planning, with several coauthors, to examine how ghosts change when we rotate the heart of the derived category. This was suggested by Yun Shi after my lecture at Brandeis. In the hereditary case, Ray Maresca showed that some ghosts will become complexes (not stalk complexes) under rotation of the heart.
\item We are working with Emre Sen on ghost modules for $n$-cluster-tilting objects.
\end{enumerate}
}

{
\section*{Acknowledgments}
The author would like to thank Gordana Todorov for several suggestions about this paper. Without her help and support, this project and many others would not have been completed. Also, Mike Sullivan, Ray Maresca and Yun Shi helped to shape this paper and to make plans for sequels with comments and suggestions. We thank Emre Sen for pointing out mistakes in the figures for extension ghosts in version 1. The author is also grateful to the Simons Foundation for its support: Grant \#686616.
}


\begin{thebibliography}{aa}

\bibitem[BR]{Apostolos-Idun} Beligiannis, Apostolos, and Idun Reiten. \emph{Homological and homotopical aspects of torsion theories}. American Mathematical Soc., 2007. 

\bibitem[A]{Asai} Asai, Sota. \emph{The wall-chamber structures of the real Grothendieck groups.} Advances in Mathematics 381 (2021): 107615. 

\bibitem[B1]{Bridgeland} Bridgeland, Tom. \emph{Stability conditions on triangulated categories.} Annals of Mathematics (2007): 317--345.

\bibitem[B2]{Bridgeland2} Bridgeland, Tom. \emph{Scattering diagrams, Hall algebras and stability conditions.} Algebr. Geom., 4 (5) (2017), pp. 523--561.


\bibitem[BST1]{BSTwallandchamb}  Br\"ustle, T., Smith, D., \& Treffinger, H. \emph{Wall and chamber structure for finite-dimensional algebras.} Advances in Mathematics 354 (2019): 106746.

\bibitem[BST2]{BSTstability}  Br\"ustle, T., Smith, D., \& Treffinger, H. (2022). \emph{Stability conditions and maximal green sequences in abelian categories}. Revista de la Uni\'on Matem\'atica Argentina, 63(1), 203--221.


\bibitem[I1]{Linearity} Igusa, Kiyoshi, \emph{Linearity of stability conditions}, Communications in Algebra (2020): 1--26.


\bibitem[I2]{MGS4clustertilted} \bysame. \emph{Maximal green sequences for cluster-tilted algebras of finite representation type.} Algebraic Combinatorics 2.5 (2019): 753--780.


\bibitem[I3]{GrInvRedo} \bysame. \emph{Generalized Grassmann invariant-redrawn}, preprint: arXiv 2502.19147.


\bibitem[IK]{IgKlein} Igusa, Kiyoshi and John Klein. \emph{The Borel regulator map on pictures, II: An example from Morse theory}. K-theory. 1993 May 1;7(3):225--67.

\bibitem[IT]{IT14}
Igusa, Kiyoshi and Gordana Todorov. \emph{Picture groups and maximal green sequences},  Electronic Research Archive (2021), 29(5): 3031--3068, doi: 10.3934/era.2021025.

\bibitem[ITW]{ITW} Igusa, Kiyoshi, Gordana Todorov, and Jerzy Weyman. \emph{Picture groups of finite type and cohomology in type $A_n$.} arXiv preprint arXiv:1609.02636 (2016).


\bibitem[I+]{MoreGhosts2} Igusa, K and coauthors. \emph{Ghost modules II}, in preparation.



\bibitem[KD]{KellerSurvey} Keller, Bernhard, and Laurent Demonet. \emph{A survey on maximal green sequences.} Representation theory and beyond 758 (2020): 267--286.

\bibitem[Ki]{King} King, A.D. \emph{Moduli of representations of finite-dimensional algebras.} Q. J. Math. Oxf. II. Ser. 45, 515--530 (1994).




\bibitem[LL]{FangLiLiu} Liu, Siyang, and Fang Li. \emph{On maximal green sequences in abelian length categories.} Journal of Algebra 580 (2021): 399--422.

\end{thebibliography}
\end{document}